\theoremstyle{plain}
\newtheorem{Thm}[equation]{Theorem}
\newtheorem{Cor}[equation]{Corollary}
\newtheorem{Prop}[equation]{Proposition}
\newtheorem{Lem}[equation]{Lemma}
\theoremstyle{definition}
\newtheorem{defn}[equation]{Definition}
\newtheorem{conj}[equation]{Conjecture}
\theoremstyle{remark}
\newtheorem{examp}[equation]{Example}
\newtheorem{rem}[equation]{Remark}
\newtheorem{claim}[equation]{Claim}
\renewcommand{\subsection}{\@startsection{subsection}{2}{0pt}{-3ex
plus -1ex minus -0.2ex}{-2mm plus -0pt minus
-2pt}{\normalfont\bfseries}} \makeatother
\numberwithin{equation}{subsection}
\newcommand{\qgr}[1]{{\mathsf{Qgrmod}}(#1)}
\newcommand{\tail}[1]{{\mathsf{tails}}(#1)}
\newcommand{\Gr}[1]{{\mathsf{grmod}}(#1)}
\newcommand{\Lmod}[1]{#1\text{-}{\mathsf{mod}}}
\newcommand{\Rmod}[1]{\mathsf{mod}\text{-}#1}
\newcommand{\bimod}[1]{(#1)\text{-}{\sf{bimod}}}
\newcommand{\ree}[1]{{\operatorname{\textit{Rees}}}_{_{#1}\,}}
\newcommand{\hdot}{{\:\raisebox{2pt}{\text{\circle*{1.5}}}}}
\newcommand{\idot}{{\:\raisebox{2pt}{\text{\circle*{1.5}}}}}
\DeclareMathOperator{\Proj}{\mathrm{Proj}}
\DeclareMathOperator{\Wh}{\mathrm{Wh}}
\DeclareMathOperator{\whb}{{{\mathrm{Wh}}^\m_\m}}
\DeclareMathOperator{\Tor}{\mathrm{Tor}}
\DeclareMathOperator{\Ext}{\mathrm{Ext}}
\DeclareMathOperator{\sym}{\mathrm{Sym}}
\DeclareMathOperator{\var}{\textit{Var}}
\DeclareMathOperator{\im}{\mathrm{Im}}
\DeclareMathOperator{\supp}{\mathrm{Supp}}
\DeclareMathOperator{\coh}{\mathrm{Coh}}
\DeclareMathOperator{\Ker}{\mathrm{Ker}}
\DeclareMathOperator{\End}{\mathrm{End}}
\DeclareMathOperator{\rk}{\mathrm{rk}}
\DeclareMathOperator{\gr}{\mathrm{gr}}
\DeclareMathOperator{\modu}{\mathsf{mod}}
\DeclareMathOperator{\Ad}{\mathrm{Ad}}
\DeclareMathOperator{\ad}{\mathrm{ad}}
\DeclareMathOperator{\act}{\mathrm{act}}
\DeclareMathOperator{\ann}{\mathtt{Ann}}
\DeclareMathOperator{\qqgr}{\mathrm{qgr}}
\DeclareMathOperator{\invar}{\mathrm{Inv}}
\DeclareMathOperator{\loc}{{\scr{L}\!\textit{oc}}}
\newcommand{\dis}{\displaystyle}
\newcommand{\beq}{\begin{equation}\label}
\newcommand{\eeq}{\end{equation}}
\DeclareMathOperator{\Specm}{\mathrm{Specm}}
\DeclareMathOperator{\Spec}{\mathrm{Spec}}
\DeclareMathOperator{\pr}{pr}
\newcommand{\iso}{{\;\stackrel{_\sim}{\to}\;}}
\newcommand{\cd}{\!\cdot\!}
\DeclareMathOperator{\Hom}{\mathrm{Hom}}
\def\ccirc{{{}_{\,{}^{^\circ}}}}
\newcommand{\bo}{\mbox{$\bigotimes$}}
\renewcommand{\o}{\otimes }
\newcommand{\bplus}{\mbox{$\bigoplus$}}
\newcommand{\ccong}{\ \cong \  }
\newcommand{\ps}{\sigma }
\newcommand{\seq}{{\tiny\succcurlyeq}}
\newcommand{\wt}{\widetilde }
\newcommand{\Id}{{\operatorname{Id}}}
\def\npb{\noindent$\bullet\quad$\parbox[t]{149mm}}
\newcommand{\scr}[1]{\mathscr{#1}}
\newcommand{\vs}{\vskip 1pt }
\def\ccirc{{{}_{^{\,^\circ}}}}
\newcommand{\mmid}{\enspace\big|\enspace}
\newcommand{\la}{\lambda}
\newcommand{\be}{\beta }
\newcommand{\si}{\Sigma}
\newcommand{\s}{{\mathbb{S}}}
\newcommand{\sn}{{\mathcal{S}}}
\renewcommand{\ss}{{\widetilde{\mathcal{S}}}}
\newcommand{\zg}{{\mathfrak{Z}\g}}
\newcommand{\dd}{{\mathscr{D}}}
\newcommand{\oo}{{\mathcal{O}}}
\newcommand{\wh}{\widehat }
\renewcommand{\aa}{{\mathscr{A}}}
\newcommand{\cat}{{{(\dd_\nu,\mc)\mbox{-}\operatorname{mod}}}}
\newcommand{\catu}{{(\U_c,\mc)\mbox{-}\operatorname{mod}}}
\newcommand{\ug}{{\mathcal{U}}{\mathfrak{g}}}
\newcommand{\U}{{\mathcal{U}}}
\newcommand{\qq}{{\mathscr{Q}}}
\def\hp{\hphantom{x}}
\newcommand{\pa}{\partial }
\renewcommand{\H}{{\mathfrak h}}
\newcommand{\ZZ}{{\mathcal Z}}
\newcommand{\BB}{\B\times\B}
\newcommand{\tgr}{{\widetilde\gr}}
\renewcommand{\AA}{{A}}
\newcommand{\ba}{{\mathfrak A}}
\newcommand{\UU}{{\mathfrak U}}
\newcommand{\mc}{\m_\chi}
\newcommand{\al}{\alpha }
\newcommand{\WHC}{{\scr{W\!HC}}}
\newcommand{\hc}{{\scr H\scr C}}
\newcommand{\whc}{\textsl{wHC}}
\newcommand{\I}{{\mathfrak I}}
\newcommand{\IJ}{{\mathcal I}}
\newcommand{\C}{\mathbb{C}}
\newcommand{\g}{\mathfrak{g}}
\renewcommand{\b}{\mathfrak{b}}
\newcommand{\n}{\mathfrak{n}}
\newcommand{\m}{\mathfrak{m}}
\newcommand{\h}{\mathfrak{h}}
\newcommand{\La}{\Lambda }
\newcommand{\kk}{{\mathsf{K}}}
\newcommand{\op}{{\operatorname{op}}}
\newcommand{\bb}{{\scr B}}
\newcommand{\xx}{{\mathbb X}^+}
\newcommand{\mm}{{\mathcal M}}
\newcommand{\T}{{\mathsf T}}
\newcommand{\inv}{^{-1}}
\newcommand{\Z}{{\mathbb Z}}
\newcommand{\en}{{\enspace}}
\newcommand{\vi}{${\en\sf {(i)}}\;$}
\newcommand{\vii}{${\;\sf {(ii)}}\;$}
\newcommand{\viii}{${\sf {(iii)}}\;$}
\newcommand{\iv}{${\sf {(iv)}}\;$}
\newcommand{\sset}{\subset}
\newcommand{\sminus}{\smallsetminus}
\newcommand{\into}{\,\,\hookrightarrow\,\,}
\newcommand{\too}{\,\,\longrightarrow\,\,}
\newcommand{\mto}{\mapsto}
\newcommand{\onto}{\,\,\twoheadrightarrow\,\,}
\newcommand{\N}{{\mathcal{N}}}
\renewcommand{\O}{{\mathbb{O}}}
\newcommand{\Ga}{\Gamma }
\newcommand{\gm}{{\C^\times}}
\newcommand{\half}{\mbox{$\frac{1}{2}$}}
\newcommand{\om}{\omega }
\newcommand{\cv}{{\mathcal V}}
\newcommand{\B}{{\mathcal{B}}}
\newcommand{\LL}{{\scr L}}
\begin{document}
\title{Harish-Chandra bimodules for quantized Slodowy slices}
\author{Victor Ginzburg}\address{Department of Mathematics, University of Chicago,  Chicago, IL 
60637, USA.}
\email{ginzburg@math.uchicago.edu}

\dedicatory{To the memory of Peter Slodowy}

 \begin{abstract} 
The Slodowy slice is an especially nice slice to
a given nilpotent conjugacy class in a semisimple Lie algebra.
Premet introduced noncommutative quantizaions of the Poisson algebra of
polynomial
functions on the Slodowy slice. 

In this paper, we define and 
study  Harish-Chandra bimodules over
Premet's algebras. We apply the technique of
 Harish-Chandra bimodules to prove  a conjecture of Premet concerning
primitive ideals, to define  projective functors, and to construct
`noncommutative resolutions'
 of Slodowy slices via translation functors.
\end{abstract}

\maketitle

\centerline{\sf Table of Contents}

$\hspace{20mm}$ {\footnotesize \parbox[t]{115mm}{
\hp${}_{}$\!\hp1.{ $\;\,$} {\tt Geometry of Slodowy slices}\newline
\hp2.{ $\;\,$} {\tt Springer resulution of Slodowy slices}\newline
\hp3.{ $\;\,$} {\tt Quantization}\newline
\hp4.{ $\;\,$} {\tt Weak Harish-Chandra bimodules}\newline
\hp5.{ $\;\,$} {\tt $\scr D$-modules}\newline
\hp6.{ $\;\,$} {\tt Noncommutative resolutions  of Slodowy slices}
}}

\bigskip


\section{Geometry of Slodowy slices}

\subsection{Introduction}\label{e} Let $\g$ be a semisimple Lie
algebra, and $\ug$ the  universal enveloping algebra of $\g$.
For any nilpotent element $e\in\g$, Slodowy 
used the
Jacobson-Morozov theorem to construct a slice to the
conjugacy class of $e$ inside the nilpotent  variety of $\g$. This
slice,
$\sn$, has a natural  structure
of an affine algebraic Poisson variety.

More recently, 
 Premet \cite{premet} has
defined, following earlier works by
Kostant \cite{kostant},
Kawanaka \cite{Ka}, and Moeglin \cite{moeglin}, for each character ${c}$ of the center
of $\ug$,
 a filtered associative algebra $\AA_{c}$. The family of algebras
 $\AA_{c}$ may be thought of as a family of  quantizations of the Slodowy slice
$\sn$ in the sense that, for any ${c}$,
one has a natural Poisson algebra isomorphism $\gr \AA_{c}\cong \C[\sn]$.
For further developments see also \cite{BK1}, \cite{BGK}, \cite{L}, \cite{premet2}-\cite{premet3}.

The algebras $A_c$ have quite interesting
representation theory which is similar, in a sense, to the representation theory
of the Lie algebra $\g$ itself. It is well known that category
$\oo$ of Bernstein-Gelfand-Gelfand plays a key role in the
 representation theory
of $\g$. Unfortunately, there seems to be no reasonable analogue
of category $\oo$ for the algebra $A_c$, apart from some special
cases, cf. \cite{BK1}, \cite{BGK}. 

In this paper, we propose to remedy the above mentioned difficulty by introducing
a category of (weak) Harish-Chandra {\em bi}modules over the algebra $A_c$.
Our definition of  weak Harish-Chandra  bimodules actually makes sense
for a wide class of algebras, cf. Definition \ref{whc_def}.
We show in particular that,
in the case of  enveloping algebras, a weak Harish-Chandra
$\ug$-bimodule is a Harish-Chandra  bimodule in the conventional sense
(used in representation theory 
of semisimple Lie algebras for a long time) if and only if
the corresponding $\dd$-module on the flag variety has regular singularities,
cf. Proposition \ref{RS}. Motivated by this result,
we  use  `micro-local' technique
developed
in \S\ref{nc_sec} to introduce a notion of `regular singularities'
for $A_c$-bimodules.\footnote{Our definition of regular singularities
works more generally, for modules over an arbitrary  filtered
$\C$-algebra  $A$  having the property that $\gr A$ is a finitely generated commutative
algebra such that the Poisson scheme
$\Spec(\gr A)$ admits a symplectic resolution.} We then define  Harish-Chandra 
$A_c$-bimodules as weak Harish-Chandra bimodules
with regular singularities.

Let $\I_c$ be the maximal ideal of the center of
the algebra  $\ug$ corresponding to a 
{\em regular} central character
$c$.
There is an associated  block $\oo_c$, of the category 
 $\oo$ of Bernstein-Gelfand-Gelfand, formed by the
  objects $M\in \oo$ such that the $\I_c$-action on $M$ is
nilpotent. One may also consider an analogous
category
of  Harish-Chandra $\ug$-bimodules. Specifically, one
considers  Harish-Chandra
 $\ug$-bimodules $K$ such that
the left $\I_c$-action on $K$ is nilpotent and  such that
one has $K\I_c=0$.
 It is known that the resulting category
 is, in fact, {\em equivalent}
to the category $\oo_c$, see \cite{BG}. Thus, one may expect the
category of  Harish-Chandra $A_c$-bimodules to be the
right substitute for a  category $\oo(A_c)$ that
may or may not exist.

For any algebra $A$, a basic example of a weak   Harish-Chandra $A$-bimodule
is the algebra $A$ itself, viewed as the diagonal bimodule.
Sub-bimodules of  the diagonal bimodule are nothing but
two-sided ideals of $A$. This shows that the theory
of (weak)   Harish-Chandra $A$-bimodules is well suited for studying
ideals in $A$, primitive ideals, in particular.

We construct an analogue of the Whittaker functor
from the category of   Harish-Chandra $\ug$-bimodules
to the category of
  Harish-Chandra $A_c$-bimodules.
Among our most important results are Theorem \ref{main_thm}
and Theorem \ref{adj} which
describe key properties of that functor.

We use the above results  to provide an alternative proof of
a conjecture of Premet that  relates  finite dimensional $A_c$-modules
to primitive ideals  $I\sset\ug$ such that
the associated variety of $I$
equals $\overline{\Ad G(e)}$, the closure of the conjugacy class of the
nilpotent $e\in\g$. This conjecture
was proved in a special case  by Premet \cite{premet3},
 using reduction to positive characteristic,
and  in full generality by Losev \cite{L}, using
deformation quantization, and later also by Premet \cite{premet4}.
Our approach  
is totally different from the aproaches used by Losev or Premet
and is, in a way, more straightforward.
Some results closely related to ours  were also obtained
by Losev \cite{L2}.



Finally, we introduce translation functors
on representations of the algebra $A_c$. In \S\ref{nc_sec},
we use those functors to   construct
`noncommutative resolutions' of the Slodowy slice by means of
a noncommutative  Proj-construction. Similar construction has
 been successfully exploited
earlier, in other
situations, by Gordon and Stafford \cite{GS}, and by Boyarchenko
\cite{Bo}.

 A different approach 
to  `noncommutative resolutions' of  Slodowy slices
was also proposed by Losev in an unpublished manuscript.

\begin{rem} We expect that, in the special case
of {\em subregular} nilpotent elements,
 our constructuion of  noncommutative resolution reduces to that of 
Boyarchenko. In more detail,
Boyarchenko considered noncommutative resolutions of
certain noncommutative algebras introduced
by Crawley-Boevey and Holland \cite{CBH}. These algebras are quantizations of
the coordinate ring of a Kleinian singularity.
By a well known result of Brieskorn and Slodowy \cite{slodowy},
the Slodowy slice to the subregular  nilpotent in
a simply laced Lie algebra $\g$
is isomorphic, as an algebraic variety, to
the  Kleinian singularity associated with the Dynkin diagram of $\g$.
Furthermore, it is expected (although no written proof
of this seems to be available, except for type $\mathbf A$,
see \cite{Hod})
 that the  algebras $A_c$
are, in that case, isomorphic to the algebras constructed
by Crawley-Boevey and Holland. Thus, our  noncommutative resolutions
should correspond, via the isomorphism, to those considered
by Boyarchenko.
\end{rem}
\subsection*{Acknowledgments.}
{\footnotesize{I am very much indebted to Ivan Losev for initiating this work,
 for very useful discussions, and for explaning to me the  results
of \cite{L2} before that work was made public.
I also thank Mitya Boyarchenko for
explaining
to me some details of his work \cite{Bo}, and Iain Gordon for a
careful reading of a preliminary draft of this paper.
This work was  supported in part by the NSF   grant  DMS-0601050.}}

\subsection{}\label{1}
Let $\g$ be a complex semisimple Lie algebra
and  $G$ the adjoint group
of $\g$. 

From now on, we fix an
$\mathfrak{sl}_{2}$-triple $\{e,h,f\}\sset\g,$ equivalently, a 
Lie algebra imbedding  $\mathfrak{sl}_{2}\into\g$
such that ${\footnotesize\left(\begin{matrix}
0&1\\0&0\end{matrix}\right)}\mto e$ and
${\footnotesize\left(\begin{matrix}
1&0\\0&-1\end{matrix}\right)}\mto h$.
Slodowy (as well as Harish-Chandra) showed, see e.g. \cite[\S 7.4]{slodowy},
 that the affine linear space $e+\Ker(\ad f)$
is a transverse slice to the $\Ad G$-conjugacy class of $e$, a
nilpotent element
of $\g$.

The Killing form on $\g$ provides an $\Ad G$-equivariant isomorphism 
$\kappa: \g\iso \g^*$, where
$\g^*$ is  the vector space dual to $\g$. Put
 $\chi:=\kappa(e)\in\g^*$.
Write $\N\sset \g^*$
for the image of the set of nilpotent elements of
$\g$, resp.   $\s\sset\g^*$ for the image of the set $e+\Ker\ad f$,
 under the isomorphism $\kappa$. Thus, $\s$ is   a transverse slice at  $\chi$, to be
called the {\em Slodowy slice}, to
the coadjoint orbit $\O:=\Ad G(\chi)$.

Below, we   mostly restrict our attention to the scheme theoretic intersection
 $\sn:=\s\cap\N$. 
Extending some classic results of Kostant, Premet proved
the following,  \cite[Theorem 5.1]{premet}.

\begin{Prop}\label{kost} The scheme $\sn$ is reduced, irreducible, and
Gorenstein.
Moreover, it is a
 normal complete intersection in $\s$ of dimension $\dim\N-\dim\O$.\qed
\end{Prop}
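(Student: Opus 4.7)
The plan is to present $\sn$ as the scheme-theoretic zero fibre of a flat morphism whose generic fibre is smooth and irreducible, after which every assertion of the proposition follows by standard commutative algebra.

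The key geometric input is the Kazhdan $\gm$-action on $\g^*$: combining the cocharacter $t\mapsto\exp(-(\log t)h)$ of $G$ with overall scaling by $t^2$ yields a $\gm$-action that preserves $\s$ and $\N$, contracts $\s$ to the unique fixed point $\chi$, and has strictly positive weights on the homogeneous generators $p_1,\ldots,p_r$ of $\C[\g^*]^G$, where $r=\rk\g$. Restrict the Chevalley map $\pi:\g^*\to\C^r,\ \xi\mapsto(p_1(\xi),\ldots,p_r(\xi))$, to $\s$, obtaining a morphism $\pi_{\s}:\s\to\C^r$ with $\pi_{\s}^{-1}(0)=\sn$ as schemes; here I use Kostant's classical result that $\pi^{-1}(0)=\N$ is a reduced complete intersection cut out by the regular sequence $p_1,\ldots,p_r$.

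The central technical step is to show $\pi_{\s}$ is flat. Since $\s\cong\C^{\dim\g-\dim\O}$ is an affine space (hence regular, so in particular Cohen-Macaulay) and $\C^r$ is regular, ``miracle flatness'' reduces flatness to equidimensionality of all fibres of the expected dimension $\dim\s-r=\dim\N-\dim\O$. Transversality of $\s$ to $\O=\Ad G(\chi)$ combined with the standard dimension estimate for intersections with nilpotent orbit strata gives $\dim\pi_{\s}^{-1}(0)\leq\dim\N-\dim\O$; the positive-weight Kazhdan action on the target then transports every fibre onto a neighbourhood of the central one, while upper semicontinuity of fibre dimension and $\gm$-homogeneity force equality throughout. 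Once flatness is in hand, $\sn$ is cut out in $\s$ by the regular sequence $p_1|_{\s},\ldots,p_r|_{\s}$, so it is a complete intersection in $\s$ of the asserted dimension, hence Cohen-Macaulay, and Gorenstein because it is a complete intersection in a regular scheme.

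The remaining properties follow by exploiting the generic fibre of $\pi_{\s}$. Over the regular locus of $\C^r$, the preimage under $\pi$ is a single regular semisimple adjoint orbit, which intersects $\s$ transversally in a smooth irreducible subvariety (the Kostant section meets each such orbit in one point, and smoothness of $\s$ propagates). Hence $\sn$ is generically reduced; being Cohen-Macaulay it has no embedded components, so it is reduced. Irreducibility follows because the flat family $\pi_{\s}$ has irreducible total space $\s$ and irreducible generic fibre, and the contracting $\gm$-action forces every component of the central fibre $\sn$ to contain $\chi$, collapsing any alleged extra component onto the unique one arising as a specialisation of the generic fibre. Finally, normality is Serre's criterion: $(S_2)$ holds because $\sn$ is Cohen-Macaulay, and $(R_1)$ is obtained by spreading out the smooth locus of the generic fibre under the $\gm$-action to an open dense smooth subset of $\sn$ with complement of codimension $\geq 2$.

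The main obstacle is flatness of $\pi_{\s}$: all of the Cohen-Macaulay, Gorenstein, and dimension statements collapse to the assertion that $p_1|_{\s},\ldots,p_r|_{\s}$ is a regular sequence on $\s$, and the reducedness/irreducibility/normality statements are formal once that flatness is combined with the (classical) smoothness and irreducibility of the generic fibre.
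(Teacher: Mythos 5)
The paper does not actually prove this proposition; it is quoted (with a terminal $\qed$) from Premet \cite[Theorem~5.1]{premet}. Your overall plan---restricting the adjoint quotient $\pi\colon\g^*\to\C^r$ (where $r:=\rk\g$) to $\s$, proving equidimensionality of all fibres via the $\gm$-contraction and miracle flatness, and reading off the complete-intersection, Cohen--Macaulay and Gorenstein properties from flatness over $0\in\C^r$---is indeed the standard route, essentially Premet's extension of Kostant's argument for the regular slice. For the flatness step you should make two points explicit that you gloss over: the transversality you need is of $\s$ to \emph{every} $G$-orbit it meets (a consequence of the $\gm$-contraction, cf.\ Proposition~\ref{ggnil}(i)), not merely to $\O$, since the dimension bound $\dim\sn\le\dim\s-r$ is obtained stratum by stratum over $\N$; and you need $\pi_\s$ dominant, i.e.\ that $\s$ contains a regular element.

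The genuine gap is in how you handle reducedness, $(R_1)$, and irreducibility. All three are deduced from properties of the \emph{generic} fibre $\pi_\s^{-1}(v)$, $v\ne0$, and then ``transferred'' to the special fibre $\sn$. None of these properties passes to the special fibre of a flat family: the central fibre can be non-reduced, reducible, or acquire an arbitrarily large singular locus even when every other fibre is a smooth variety (consider $y^2=tx$). Likewise, ``spreading out the smooth locus of the generic fibre under the $\gm$-action'' cannot reach $\sn$, because $\sn$ is exactly the $\gm$-stable fibre over the fixed point $0\in\C^r$ and is disjoint from all the others. The correct argument works \emph{inside} $\sn$: for $x\in\s\cap\O^{\mathrm{reg}}$ the differentials $dp_1(x),\dots,dp_r(x)$ are independent on $T_x\s$ (they vanish on $T_x\Ad G(x)$ by $G$-invariance, span its annihilator because $x$ is regular, and $T_x\s+T_x\Ad G(x)=\g$ by transversality), so such $x$ are smooth points of $\sn$; and $\s\cap(\N\smallsetminus\O^{\mathrm{reg}})$ has codimension $\ge2$ in $\sn$ because $\N\smallsetminus\O^{\mathrm{reg}}$ has codimension $2$ in $\N$ and $\s$ meets orbits transversally. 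This gives $(R_0)+(S_1)\Rightarrow$ reduced, $(R_1)+(S_2)\Rightarrow$ normal, and then normal plus connected (the $\gm$-action contracts $\sn$ to $\chi$) gives irreducible---with no appeal to the generic fibre at all. Finally, the parenthetical ``the Kostant section meets each such orbit in one point, and smoothness of $\s$ propagates'' is a non-sequitur: the Kostant section is the Slodowy slice at a \emph{regular} nilpotent and says nothing about $\pi_\s$ for a general $e$, and in any case irreducibility of the generic fibre of $\pi_\s$ is neither obvious nor needed.
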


\subsection{The Lie algebra $\m$}\label{2} 
The  $\ad h$-action on $\g$  yields a weight decomposition
\beq{weight}
\g = \bplus_{i
\in \mathbb{Z}}\ \g(i)\quad\text{where}\quad\g(i) = \{ x \in \g \mmid [h,x]=ix \}.
\eeq

Note that  $e$ is a nonzero element of $\g(2)$.
Hence, the assignment $\g(-1)\times \g(-1)\to\C,\
x,y\mto \chi([x,y]),$ gives
a skew-symmetric  nondegenerate bilinear form  on $\g(-1)$.
We choose and fix   $\ell\sset\g(-1)$, a Lagrangian subspace  with respect to that form.

Following Kawanaka \cite{Ka} and Moeglin \cite{moeglin}, one puts 
\begin{displaymath}
\m := \ell\ \bplus\ \bigl(\oplus_{_{i \leq -2}}\; \g(i)\bigr).
\end{displaymath}

Thus, $\m $ is a nilpotent Lie subalgebra of
$\g$ such that  the linear function $\chi$ 
vanishes on $[\m,\m]$. Let $M$ be the unipotent subgroup of $G$ with Lie
algebra $\m$. By a standard easy computation one finds, cf. \eqref{dimm},
\beq{dimM}
\dim M=\half\dim\O.
\eeq

 Write $V^\perp\sset \g^*$ for
the annihilator of a vector subspace $V\sset \g.$
We will prove

\begin{Prop}\label{ggnil} \vi Any $\Ad G$-orbit in $\g^*$ meets the affine 
space $\chi+
\m^{\perp}$
transversely. \vs

\vii  We have $\O\cap(\chi+ \m^{\perp})=\Ad M(\chi)$
is a closed
 Lagrangian submanifold in the coadjoint orbit $\O=\Ad G(\chi)$,  a symplectic manifold
with  Kirillov's symplectic
structure. \vs

\viii The scheme theoretic intersection $\,\N\cap (\chi+ \m^{\perp})$ is a
 reduced complete intersection in $\chi+ \m^{\perp}$;
 the  $M$-action induces an $M$-equivariant isomorphism 
$\ \dis M \times \sn\ \iso\ \N\cap (\chi+ \m^{\perp}).$
\end{Prop}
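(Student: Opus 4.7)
The plan is to reduce all three statements to infinitesimal computations at $\chi$ by exploiting a Kazhdan-type $\gm$-action on $\g^*$. Let $\gamma:\gm\to G$ be the one-parameter subgroup with $d\gamma(1)=h$ and set $\rho(t)\xi:=t^2\Ad^*(\gamma(t)^{-1})\xi$. Since $\chi\in\g(-2)^*$ one has $\rho(t)\chi=\chi$; since $\m$ is a sum of $\ad h$-weight spaces, $\gamma(\gm)$ normalises $M$ and preserves $\m^\perp$; a direct weight count shows that $\rho$ acts on $\m^\perp$ with strictly positive weights, so $\rho(t)$ contracts $\chi+\m^\perp$ onto $\chi$ as $t\to 0$. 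Finally $\rho$ preserves both $\N$ (a $\gm$-cone of nilpotents) and the nilpotent orbit $\O$. This contraction, together with the $M$- and $\gm$-equivariance of the maps involved, will propagate local information at $\chi$ throughout $\chi+\m^\perp$.

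For (i), transversality of $\Ad G\cdot\xi$ with $\chi+\m^\perp$ at $\xi$ is equivalent to $\g^\xi\cap\m=0$. At the base point $\xi=\chi$ the Killing form identifies $\g^\chi$ with $\g^e$, which $\mathfrak{sl}_2$-theory places inside $\bplus_{i\geq 0}\g(i)$, disjoint from $\m\sset\bplus_{i\leq-1}\g(i)$. The locus $Z=\{\xi\in\chi+\m^\perp:\g^\xi\cap\m\neq 0\}$ is Zariski closed and $\rho$-stable, since $\g^{\rho(t)\xi}=\Ad(\gamma(t)^{-1})\g^\xi$ and $\Ad(\gamma(t))\m=\m$; if $Z$ were nonempty it would contain the flow-limit $\chi=\lim_{t\to 0}\rho(t)\xi$, contradicting $\chi\notin Z$.

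For (ii), the inclusion $\Ad M(\chi)\sset\chi+\m^\perp$ is obtained by iterating $\ad^*y$ on $\chi$ for $y\in\m$, using $\chi([\m,\m])=0$ (the Lagrangian choice of $\ell$ together with $\chi$ being supported on $\g(-2)^*$) and $\Ad(M)\m=\m$. The tangent space $\ad^*(\m)\chi$ has dimension $\dim\m=\half\dim\O$ (combining $\g^\chi\cap\m=0$ with the formula for $\dim M$), and it is isotropic for Kirillov's form since $\omega(\ad^*x\cd\chi,\ad^*y\cd\chi)=\chi([x,y])$ vanishes on $\m\times\m$; hence $\Ad M(\chi)$ is Lagrangian in $\O$. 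By (i), $\O\cap(\chi+\m^\perp)$ is smooth of the same dimension $\dim M$, and it is $\rho$-stable, hence connected (it contracts to $\chi$). Thus $\Ad M(\chi)$ is open of full dimension in this connected smooth variety; its complement is closed and $\rho$-stable, and the attractor argument of (i) forces it to be empty, yielding both the equality and closedness in $\O$.

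For (iii), I construct the action map $\alpha:M\times\s\to\chi+\m^\perp$, $(m,\xi)\mto\Ad^*(m)\xi$, well-defined by the above. Its differential at $(1,\chi)$, namely $(x,v)\mto\ad^*(x)\chi+v$, has image $\ad^*(\m)\chi+\kappa(\g^f)$; under Killing these summands correspond to $[\m,e]$ and $\g^f$, whose intersection vanishes by the standard $\mathfrak{sl}_2$-fact $\im(\ad e)\cap\ker(\ad f)=0$, while the dimension identity $\dim\m+\dim\g^f=\half(\dim\g+\dim\g^e)=\dim\m^\perp$ shows the sum fills $\m^\perp$. Hence $d\alpha$ is an isomorphism at $(1,\chi)$; $M$- and $\gm$-equivariance (both sides carry compatible attracting Kazhdan actions) spread \'etaleness to every point, while $\alpha^{-1}(\chi)=\{(1,\chi)\}$ follows from $\s\cap\O=\{\chi\}$ and $M^\chi=\{1\}$. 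The main obstacle is then the final promotion from ``\'etale with singleton fibre over the attractor'' to ``global isomorphism'': \'etaleness alone is insufficient without some properness input, and one must leverage the attracting $\gm$-action (e.g.\ by upgrading $\alpha$ to a finite \'etale cover via the flow, or by checking iso on formal neighbourhoods at $(1,\chi)$ and then spreading out along $\rho$-orbits). Granted this, restricting $\alpha$ to $\alpha^{-1}(\N\cap(\chi+\m^\perp))=M\times\sn$ (using $\Ad(M)\N=\N$) gives the desired $M$-equivariant iso $M\times\sn\iso\N\cap(\chi+\m^\perp)$, with reducedness and the complete-intersection property of the right-hand side following from (i) together with reducedness of $\N$.
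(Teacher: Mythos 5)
Your arguments for (i) and (ii) are essentially correct and run on the same fuel as the paper's, namely the contracting Kazhdan $\gm$-action. The paper phrases (i) via openness (transversality holds near $\chi$ by continuity, then spreads by the contraction); you phrase it dually via closedness of the failure locus $\{\xi:\g^\xi\cap\m\neq 0\}$ and $\rho$-stability --- same idea, complementary packaging, and your reformulation via $\g^\xi\cap\m=0$ is a clean way to see what is going on. For (ii) you actually supply a bit more detail than the paper on the set equality $\O\cap(\chi+\m^\perp)=\Ad M(\chi)$: the paper only writes out the Lagrangian property (isotropic by $\chi|_{[\m,\m]}=0$, closed since $M$ is unipotent, half-dimensional by \eqref{dimM}) and leaves the equality to fall out of \eqref{isom_S}, whereas your open-dense-plus-attractor argument is an explicit and correct alternative.

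Where you diverge from the paper is in (iii), and that is exactly where you have an acknowledged gap. The paper does not reprove the $M$-equivariant isomorphism $M\times\s\iso\chi+\m^\perp$ --- it cites \cite[Lemma 2.1]{GG} for it, and then (iii) is just the restriction of that isomorphism to $\N$ together with Proposition \ref{kost}. You instead try to prove the isomorphism from scratch via the action map $\alpha:M\times\s\to\chi+\m^\perp$. Your computation that $d\alpha$ is an isomorphism at the base point (using $[\m,e]\cap\g^f=0$ and the dimension count $\dim\m+\dim\g^f=\dim\m^\perp$) is correct, and $M\times\gm$-equivariance propagates \'etaleness. But as you yourself flag, \'etale with singleton fibre over the attractor does not by itself give a global isomorphism: you need either a properness/finiteness input or a degree argument, and at that point you are essentially re-deriving \cite[Lemma 2.1]{GG}. (One way to close it: $\alpha$ \'etale plus the $\rho$-attractor shows $\alpha$ is surjective, and then injectivity can be extracted from freeness of the $M$-action on $\chi+\m^\perp$ --- which you do get from $\g^\xi\cap\m=0$ and unipotence --- together with $\s\cap\O=\{\chi\}$ for a single-point intersection with each $M$-orbit; alternatively one checks directly that the pullback of coordinate functions gives a polynomial inverse, which is what the grading argument in \cite{GG} effectively does.) As written the step is incomplete, so either finish this off carefully or, more economically, just invoke \eqref{isom_S} as the paper does and deduce (iii) by restriction to $\N$.
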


\begin{rem}
Using  the isomorphism in the last line above, we may write
\beq{red_var}
\sn\ccong [\N\cap (\chi+\m^\perp)]/\!\Ad M.
\eeq

This formula means that the variety  $\sn$ may be obtained from $\N$
by a Hamiltonian reduction with respect to the natural  $M$-action on $\N$.
\end{rem}

\begin{conj}\label{LL} There exists a  Borel subalgebra $\b\sset\g$
such that $\b^\perp\cap \Ad M(\chi)=\{\chi\}$.
\end{conj}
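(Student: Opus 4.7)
The plan is to translate the problem to $\g$ via the Killing form $\kappa: \g \iso \g^*$. Under this identification $\b^\perp$ corresponds to the nilradical $\n \subset \g$ of $\b$, and $\chi$ corresponds to $e$; the conjecture becomes: there exists a Borel $\b$ with $e \in \n$ (forcing $\b$ to lie in the Springer fiber $\B_e$) such that $\Ad M(e) \cap \n = \{e\}$.

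The main tool I would use is the Kazhdan contracting $\gm$-action. Fix a Cartan $\h \ni h$, let $\rho: \gm \to G$ be the corresponding one-parameter subgroup so that $\Ad \rho(t)|_{\g(i)} = t^i \cdot \Id$, and define $t \cdot x := t^2\, \Ad_{\rho(t)^{-1}}(x)$; this scales $\g(i)$ by $t^{2-i}$. For any $m \in M$ the Taylor expansion $\Ad_m(e) = \sum_{k \geq 0}\frac{1}{k!}(\ad X)^k(e)$, where $X \in \m$ has $\ad h$-weights $\leq -1$, shows $\Ad_m(e) \in \bigoplus_{j \leq 2}\g(j)$ with weight-$2$ component equal to $e$; hence $\lim_{t \to 0} t \cdot \Ad_m(e) = e$. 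For any Borel $\b \supset \h$, both $\Ad M(e)$ (since $M$ is normalized by $\rho(\gm)$) and $\n$ (a sum of root spaces, hence $\ad h$-stable) are $\gm$-stable, so $\Ad M(e) \cap \n$ is a closed, $\gm$-invariant subscheme contracted to $\{e\}$ as $t \to 0$, and hence connected. If it is $0$-dimensional, it must therefore equal $\{e\}$.

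It then remains to find a Borel $\b \supset \h$ for which $\Ad M(e) \cap \n$ is $0$-dimensional. Both $\Ad M(e)$ and $\O \cap \n$ are Lagrangian submanifolds of $\O$ (the first by Proposition \ref{ggnil}(ii); the second by the Steinberg formula $\dim(\O \cap \n) = \frac{1}{2}\dim \O$), so $0$ is the expected dimension of the intersection. However, one cannot always take the `obvious' Borel contained in the Jacobson--Morozov parabolic: already for the minimal nilpotent in type $\mathbf{A}_2$, this choice gives a $1$-dimensional intersection, whereas a suitable Weyl-conjugate Borel yields the desired $\{e\}$. The main obstacle is thus to exhibit, for every nilpotent $e$ and every choice of $\ell$, a positive system for which $\n$ is transverse at $e$ to $T_e(\Ad M(e)) = [\m, e] = [\ell, e] \oplus \bigoplus_{j \leq 0}\g(j)$, equivalently, $\n \cap [\m, e] = 0$. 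When $h$ is not regular, $\bigoplus_{i > 0}\g(i)$ is not the nilradical of any Borel containing $\h$, so the search over Weyl chambers becomes delicate; one likely needs additional input from the fine structure of the Springer fiber $\B_e$ together with the finite $M$-orbit decomposition of the flag variety (cf.\ the work of Richardson--Springer on $K$-orbits). A tangent-space transversality check at $e$, combined with the Kazhdan contraction, would then complete the proof.
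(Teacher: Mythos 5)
This statement is labeled as a \emph{Conjecture} in the paper and is left unproved; there is no argument in the source for your attempt to be compared against. Your proposal is therefore best read as a partial reduction, not a proof, and you are right to flag the remaining step as open --- it is, in fact, the entire content of the conjecture. The reduction you give is sound and matches the remark that immediately follows the conjecture in the paper (both $\b^\perp\cap\O$ and $\Ad M(\chi)$ are Lagrangian in $\O$, so zero is the expected intersection dimension): the Kazhdan contraction $t\bullet x=t^2\Ad\gamma_{t^{-1}}(x)$ retracts $\chi+\m^\perp$ to $\chi$, $\Ad M(\chi)$ is closed and $\bullet$-stable, and for any Borel $\b\supset\h$ with $\chi\in\b^\perp$ the closed set $\b^\perp\cap\Ad M(\chi)$ is $\bullet$-stable and hence connected through $\chi$. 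So it would suffice to exhibit one such $\b$ at which the intersection is zero-dimensional, and your $\mathfrak{sl}_3$ computation correctly illustrates both that the Jacobson--Morozov Borel can fail and that a Weyl-conjugate Borel can succeed. None of this is in the paper, but none of it closes the gap either.

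One genuine error to fix: your identity $T_e(\Ad M(e))=[\m,e]=[\ell,e]\oplus\bigoplus_{j\leq 0}\g(j)$ is false. The first equality is right, but the second cannot hold on dimension grounds: $\ad e$ is injective on $\m$ (since $\g^e$ has only nonnegative $\ad h$-weights while $\m$ has only negative ones), so $\dim[\m,e]=\dim\m=\tfrac12\dim\g(-1)+\sum_{j\leq -2}\dim\g(j)$, whereas $\dim\bigl([\ell,e]\oplus\bigoplus_{j\leq 0}\g(j)\bigr)=\tfrac12\dim\g(-1)+\dim\g(0)+\sum_{j\leq -1}\dim\g(j)$, which is strictly larger since $\g(0)\supset\h\neq 0$. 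The correct description is $[\m,e]=\ad e(\ell)\oplus\bigoplus_{j\leq 0}\ad e\bigl(\g(j-2)\bigr)$, a proper subspace of $\bigoplus_{j\leq 1}\g(j)$ in general, because $\ad e:\g(j-2)\to\g(j)$ need not be surjective when $j\leq 0$. Your transversality criterion $\n\cap[\m,e]=0$ is still the right thing to aim for, so the slip does not undermine the structure of your reduction, but it would lead you astray in any explicit case-by-case verification. The open problem --- producing, for every nilpotent $e$ and every admissible $\ell$, a Borel $\b\supset\h$ with $e\in\n$ and $\n\cap\ad e(\m)=0$ --- remains exactly as hard as the conjecture itself, and the paper offers no route to it.
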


\begin{rem} We recall that, for any Borel subalgebra $\b$,
(each irreducible component of) the set $(\b^\perp\cap\O)_{\text{red}}$
 is known to be a Lagrangian subvariety
in $\O$, see \cite[Theorem 3.3.6]{CG}.

Thus,  Conjecture \ref{LL} says that there exists a  Borel subalgebra $\b$ such
that  the two  Lagrangian subvarieties $\b^\perp\cap\O$ and $\Ad M(\chi)$
meet at a single point $\chi$.\end{rem}
\smallskip

Let $\varpi:
\g^*\to\m^*$ be the canonical projection induced by restriction of
linear functions from $\g$ to 
$\m$. The map $\varpi$ may be thought of as a moment map associated
with the $\Ad M$-action on $\g^*$.
We put  $\chi_\m:=\chi|_\m=\varpi(\chi).$
Observe that $\chi_\m\in\m^*$ is  a fixed point of
 the coadjoint $M$-action on $\m^*$, and we have
$\varpi\inv(\chi_\m)=\chi+\m^\perp.$

We write $\oo_Y$ for the structure sheaf of a scheme $Y$.
Let $\oo_{\chi}$ denote the localization
of the polynomial algebra $\C[\m^*]$ at the point $\chi_\m$.

\begin{Cor}\label{flat}  Let $Y$ be a $G$-scheme, and let $f: Y\to\g^*$
be a  $G$-equivariant morphism such that $\chi\in f(Y).$ Then, we have

\vi  The point $\chi_\m$ is a regular value of the
composite map $\varpi\ccirc f$.

\vii  Given a $G$-{\em equivariant} coherent $\oo_Y$-module $M$,
the localization 
$(\varpi\ccirc f)^*\oo_\chi\bigotimes_{\oo_Y}M$
 is a flat  $(\varpi\ccirc f)^\hdot\oo_\chi$-module.

\viii If $Y$ is reduced and irreducible, then 
$f\inv(\s)$ is a reduced complete intersection in $Y,$ of dimension
$\dim f\inv(\s)=\dim Y-\dim\O$.\qed
\end{Cor}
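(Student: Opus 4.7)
\vi At any $y\in(\varpi\ccirc f)\inv(\chi_\m)$ the hypothesis $(\varpi\ccirc f)(y)=\chi_\m$ means $f(y)\in\chi+\m^\perp$. By $G$-equivariance of $f$, the differential $df_y$ carries the tangent $T_y(G\cd y)=[\g,y]$ onto $T_{f(y)}(G\cd f(y))=[\g,f(y)]$. Proposition \ref{ggnil}\vi says that $G\cd f(y)$ meets $\chi+\m^\perp$ transversely at $f(y)$, i.e.\ $[\g,f(y)]+\m^\perp=\g^*$. Projecting modulo $\m^\perp$ via the canonical surjection $d\varpi_{f(y)}:\g^*\onto\m^*$ one obtains $d\varpi_{f(y)}([\g,f(y)])=\m^*$, so the composition $d(\varpi\ccirc f)_y$ surjects onto $T_{\chi_\m}\m^*$.

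\vii Let $a:G\times Y\to Y$ be the action and $p_2:G\times Y\to Y$ the projection; both are faithfully flat, and the $G$-equivariant structure on $M$ supplies an isomorphism $a^*M\iso p_2^*M$ of coherent $\oo_{G\times Y}$-modules. Set $\gamma:=(\varpi\ccirc f)\ccirc a$, so $\gamma(g,y)=\varpi(\Ad^*(g)f(y))$. Applying \vi with $gy$ in place of $y$ shows that $\chi_\m$ is a regular value of $\gamma$, and in fact the surjectivity of $d\gamma$ is already realised by differentiation in the $G$-factor. Consequently, near any point of $\gamma\inv(\chi_\m)$ the pull-back of the maximal ideal of $\oo_\chi$ is generated inside $\oo_{G\times Y}$ by functions in $p_1^*\oo_G$. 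Since the stalks of $p_2^*M$ take the form $\oo_{G,g}\o_\C M_y$, on which $\oo_\chi$ acts through the smooth homomorphism $\oo_\chi\to\oo_{G,g}$, flatness of $p_2^*M$ over $\oo_\chi$ via $\gamma$ follows. Transporting flatness across the isomorphism $a^*M\iso p_2^*M$ and descending along the faithfully flat $a$ yields flatness of $M$ over $\oo_\chi$ via $\varpi\ccirc f$.

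\viii Slodowy's transversality of $\s$ to $\Ad G$-orbits shows that the action map $\alpha:G\times\s\to\g^*,\ (g,s)\mto\Ad^*(g)s,$ is smooth. Form the fiber product $Z:=Y\times_{\g^*}(G\times\s)$: the base-change of $\alpha$ along $f$ is a smooth morphism $Z\to Y$ of relative dimension $\dim G-\dim\O$, so $Z$ is reduced and equidimensional with $\dim Z=\dim Y+\dim G-\dim\O$. The free $G$-action on $G\times\s$ (left multiplication on the first factor) together with $G$-equivariance of $f$ induces a free $G$-action on $Z$, and the map $(g,y)\mto(gy,g,f(y))$ is a $G$-equivariant isomorphism $G\times f\inv(\s)\iso Z$, exhibiting the $G$-invariant morphism $Z\to f\inv(\s)$, $(y,g,s)\mto g\inv y$, as a trivial $G$-torsor. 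Hence $f\inv(\s)\cong Z/G$ is reduced and equidimensional of dimension $\dim Z-\dim G=\dim Y-\dim\O$. The complete-intersection property stems from $\s\sset\g^*$ being cut out by a regular sequence of $\dim\O$ linear forms: the pull-backs generate the ideal of $f\inv(\s)$ in $\oo_Y$ with the correct codimension, and regularity of this sequence on $\oo_Y$ is verified after pulling back along the smooth cover $Z\to Y$. The principal subtlety here is precisely this complete-intersection assertion under the weak hypothesis that $Y$ is only reduced and irreducible (not Cohen--Macaulay); I plan to handle it by descending the complete-intersection datum from $Z$, where the statement reduces to smoothness of $Z\to Y$ combined with the known complete intersection $\s\sset\g^*$.
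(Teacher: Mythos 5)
Your overall plan is the one the paper leaves implicit: deduce (i) from the transversality of Proposition \ref{ggnil}(i) together with $G$-equivariance of $f$; deduce (ii) by descent along the (faithfully flat) action map $a:G\times Y\to Y$, using that $\gamma=(\varpi\ccirc f)\ccirc a$ is already submersive in the $G$-direction; and deduce (iii) from the étale-local product structure that the paper records in Remark \ref{slice}. Parts (i) and (iii) are in order. In (iii), for the regular-sequence / complete-intersection assertion that you flag as a subtlety, note that $Z\to Y$ is smooth surjective only onto the open set $f\inv(\Ad G\cd\s)$; this is enough, because Koszul acyclicity for the pulled-back linear forms only needs to be checked at points of $V(f^*\ell_1,\dots,f^*\ell_r)=f\inv(\s)$, all of which lie in that open set, and acyclicity descends along the resulting faithfully flat cover.

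There is, however, a genuine error in the middle of your argument for (ii). The claim that near a point of $\gamma\inv(\chi_\m)$ the ideal $\gamma^*\mathfrak{m}_{\chi_\m}\cdot\oo_{G\times Y}$ is generated by functions in $p_1^*\oo_G$, and the resulting claim that $\oo_\chi$ acts on the stalk $\oo_{G,g}\o_\C M_y$ through a homomorphism $\oo_\chi\to\oo_{G,g}$, are both false: the map $\gamma(g,y)=\varpi(\Ad^*(g)f(y))$ genuinely depends on $y$, so $\gamma^*:\oo_\chi\to\oo_{G\times Y,(g,y)}$ does \emph{not} factor through the first factor. (Already for $G=\mathbb{G}_a$ acting by translation on $Y=\mathbb{A}^1$ with $\gamma(g,y)=g+y$, the ideal $(g+y)$ has no generating set depending on $g$ alone.) The correct step uses the submersivity in the $G$-direction slightly differently: the map $\Gamma:(g,y)\mapsto(\gamma(g,y),y)$ is smooth (hence flat) near $\gamma\inv(\chi_\m)$, one has $p_2^*M=\Gamma^*(\mathrm{pr}_Y^*M)$ with $\mathrm{pr}_Y^*M$ visibly flat over $\m^*$, and a flat pullback of a module flat over the base is again flat over the base. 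With this repair, the faithfully flat descent along $a$ gives (ii) as you intended.
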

\begin{proof}
The transversality statement of Proposition \ref{ggnil}(i) may be
equivalently reformulated as  follows:

{\em For any $x\in \g^*$,
the point $\chi_\m$ is a regular value of the
composite map} $\Ad G(x)\into\g^*\stackrel{\varpi}\to\m^*$.
\vskip 2pt

The above statement insures that, for any point
$y\in Y$ such that $\varpi(f(y))=\chi_\m$, the differential
$d\varpi\ccirc df:
T_yY\to T_{\chi_\m}\m^*,$
of the map $\varpi\ccirc f$, is a surjective
linear map. This yields part (i) of the corollary.
Parts (ii)-(iii) follow  from (i) combined with  Proposition \ref{ggnil}(i).
\end{proof}

\subsection{Proof of  Proposition \ref{ggnil}}
First, we recall a few
well known results. Write $\g^x\sset \g$ for the centralizer of
an element $x\in\g$. 
To prove  formula \eqref{dimM},
we compute
$$
\dim\m=\dim\big(\bplus_{i<0}\ \g_i\big) -\half\dim\g(-1)
=
\half[\dim\g-\dim\g(0)]-\half\dim\g(-1).
$$

By  ${\mathfrak{sl}}_2$-theory, one has that $\dim\g(0)+\dim\g(1)=\dim\g^e$.
Hence, we find
\beq{dimm}
\dim\m=\half[\dim\g-\dim\g(0)-\dim\g(1)]=
\half[\dim\g-\dim\g^e]=\half\dim\O.\en\Box
\eeq
\smallskip

Observe next that, for the element $f$ of our $\mathfrak{s}\mathfrak{l}_2$-triple,
 we have $\g^f\sset \bplus_{i\leq 0}\
\g(i),$ by  $\mathfrak{s}\mathfrak{l}_2$-theory. It follows that
$\s\sset\chi + \m^{\perp}$. Also, it is easy to see that the set
$\chi + \m^{\perp}$ is stable under the coadjoint $M$-action.
Moreover, it was proved in \cite[Lemma 2.1]{GG} 
that the $M$-action in $\chi + \m^{\perp}$ is free,
and the action-map induces an $M$-equivariant isomorphism of algebraic
varieties:
\beq{isom_S}
M \times \s\ \iso\ \chi+ \m^{\perp},
\eeq
where $M$ acts on $M \times \s$ via its action on the first factor by
left translations.
\smallskip

We may exponentiate the Lie algebra map $\mathfrak{sl}_{2}\to\g$
to a rational group homomorphism $SL_2\to G$.
Restricting the latter map
to the torus $\gm\sset SL_2$, of diagonal matrices, one gets a
morphism $\gamma: \gm\to G,\ t\mto\gamma_t$. Following Slodowy, one
defines a $\bullet$-{\em action}  of $\C^\times$ on $\g$ by
\beq{bullet}
\gm\ni t:\ x\mto t\bullet x:=t^{2}\cdot
\Ad \gamma_{t^{-1}}(x),\qquad
x \in \g.
\eeq

Since $\Ad \gamma_t (e) = t^{2}\cdot e$,
the $\bullet$-action
 fixes $e$. 
Dualizing,
one gets
a $\bullet$-action of $\gm$ on $\g^*$ that  fixes the point
$\chi\in\g^*$.
It is easy to see that each of the spaces, $\chi+\s$ and $\chi+\m^\perp$, is $\bullet$-stable
and, moreover, the $\bullet$-action contracts the space $\chi+\m^\perp$ to $\chi$.

\begin{proof}[Proof of Proposition \ref{ggnil}] Observe
first  the  statement of part  (i)
is clear for the orbit $\O=\Ad G(\chi),$ since the space
$\chi+\m^\perp$ contains a transverse slice to $\O$.
Below,  we will use the identification
$\kappa: \g\iso\g^*$, so $\chi$ gets identified with  $e$, and we may 
 write $\chi+\m^\perp\sset \g$.

Now, let $x\in \chi+\m^\perp$ be an arbitrary element and put
$O:=\Ad G(x)$.
We are going to reduce the statement  (i) for $O$ to the special
case of the orbit $\O$
using the $\bullet$-action as follows.

Observe that
the 
tangent space to $O$  at the point $x$
equals $T_xO=[\g,x]=\ad x(\g),$ a vector subspace of $\g$.
Proving part (i) amounts to showing that
the composite   $\pr\ccirc\kappa\ccirc\ad x:\ \g\to \g\iso\g^*\onto\m^*$ is
a surjective linear map, for any $x\in \chi+\m^\perp$.
It is clear that, for any  $x\in \g$ sufficiently close to
$e$, the map $\pr\ccirc\kappa\ccirc\ad x$ is surjective, by continuity.
Since the $\bullet$-action on $\chi+\m^\perp$
is a contraction,
we deduce by $\gm$-equivariance that the surjectivity holds 
for any $x\in e+\m^\perp$.
Part (i) is proved.

The isomorphism of part (iii) follows from Proposition
\ref{kost},
by  restricting   isomorphism
\eqref{isom_S} to $\N$. All the other claims of part (iii) then follow from
the  isomorphism. 

To prove (ii), we observe first, that since
$\chi$ vanishes on $[\m,\m]$, the vector space $\ad\m(\chi)$ is an
isotropic
subspace of the tangent space $T_\chi\O$. This implies, by
$M$-equivariance,
that $\Ad M(\chi)$ is an isotropic submanifold of $\O$. 
This $\Ad M$-orbit is closed since $M$ is a unipotent group.
Finally, the $M$-action being free
we find $\dim \Ad M(\chi)=\dim M=\half\dim\O$, by formula \eqref{dimM}.
It follows that
$\Ad M(\chi)$ is a Lagrangian submanifold of $\O$.
\end{proof}

\section{Springer resulution of Slodowy slices}
\subsection{The Slodowy variety}\label{SV} Let $\B$ be the flag variety, i.e.,
the variety of all Borel subalgebras in $\g$. Let $T^*\B$ be the total
space of the
 cotangent bundle on $\B$, equipped with the standard symplectic
 structure
and the natural Hamiltonian $G$-action.
An associated moment map  is  given by the first projection
\beq{TB}
\pi:\ T^*\B=\{(\la,\b)\in\g^*\times\B\mmid \la\in\b^\perp\}\too\g^*,
\quad (\la,\b)\mto\la.
\eeq

The map $\pi$, called   Springer resolution,
is a {\em symplectic resolution} of
$\N.$ This means that, one has
 $\N=\pi(T^*\B)$ and, moreover,
 $\pi$ is a resolution of singularities of $\N$
such that the pull-back
morphism $\pi^*:\ \oo_{\g^*}\to\oo_{T^*\B}$ intertwines
the Kirillov-Kostant Poisson bracket on $\oo_{\g^*}$ with the
 Poisson bracket on $\oo_{T^*\B}$ coming from the
symplectic structure on $T^*\B$.

The {\em Slodowy variety} is defined as
  $\ss:=\pi\inv(\sn)=\pi\inv(\s)$, a scheme theoretic preimage of the Slodowy slice
under the Springer resolution.
Also, let $\B_\chi:=\pi\inv(\chi)_\text{red}$ be the Springer   fiber over $\chi$,
equipped with {\em reduced} scheme structure. Clearly, we have
$\B_\chi\sset\ss$.

\begin{Prop}\label{red} \vi The map $\pi: \ss\to \sn$
is a symplectic resolution,  in particular, $\ss$ is
a smooth  and connected symplectic  submanifold in $T^*\B$
of dimension $\dim\ss=2\dim\B_\chi.$

\vii The  Springer fiber $\B_\chi$ is a  (not necessarily irreducible)
Lagrangian subvariety of $\ss$.
\end{Prop}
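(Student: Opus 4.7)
The key observation is that $\ss$ can be identified with a Hamiltonian reduction of $T^*\B$ by the $M$-action, paralleling the presentation $\sn\ccong[\N\cap(\chi+\m^\perp)]/\Ad M$ of \eqref{red_var}. First, I would apply Corollary \ref{flat} with $Y=T^*\B$ and $f=\pi$: part (i) of that corollary asserts that $\chi_\m$ is a regular value of $\varpi\ccirc\pi$, which is the moment map for the Hamiltonian $M$-action on $T^*\B$. Consequently, the level set $(\varpi\ccirc\pi)^{-1}(\chi_\m)=\pi^{-1}(\chi+\m^\perp)$ is a smooth closed subvariety of $T^*\B$ of pure dimension $2\dim\B-\dim M$; because $\pi$ takes values in $\N$, this coincides with $\pi^{-1}(\N\cap(\chi+\m^\perp))$.

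I would then lift the $M$-equivariant isomorphism $M\times\sn\iso\N\cap(\chi+\m^\perp)$ of Proposition \ref{ggnil}(iii) along $\pi$ to an $M$-equivariant isomorphism $M\times\ss\iso\pi^{-1}(\chi+\m^\perp)$, $(g,x)\mto g\cd x$, with $M$ acting on the source by left translation on the first factor. Surjectivity here uses the $G$-equivariance of $\pi$, and injectivity uses freeness of the $M$-action on $\chi+\m^\perp$. Comparing dimensions yields $\dim\ss=2\dim\B-\dim\O$, and smoothness of the product forces $\ss$ itself to be smooth. Since the $M$-action on the level set is free with $\ss$ a cross-section, Hamiltonian reduction equips $\ss\ccong\pi^{-1}(\chi+\m^\perp)/M$ with a symplectic form obtained as the restriction of the canonical form on $T^*\B$. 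Properness of $\pi\colon\ss\to\sn$ follows by base change from $\pi\colon T^*\B\to\N$, while birationality follows because $\sn$ meets the regular nilpotent orbit $\O\reg$ in an open dense subset of $\sn$ (as $\O\reg$ is open dense in the irreducible variety $\N$) and the Springer map is an isomorphism over $\O\reg$. Connectedness of $\ss$ is then a consequence of Zariski's Main Theorem applied to the normal variety $\sn$ (Proposition \ref{kost}).

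For part (ii), I would invoke Spaltenstein's classical theorem that every Springer fiber $\B_x\sset T^*\B$ is nonempty, equidimensional, and Lagrangian in $T^*\B$, with $\dim\B_x=\dim\B-\half\dim(\Ad G(x))$. Specializing to $x=\chi$ yields $\dim\B_\chi=\dim\B-\half\dim\O=\half\dim\ss$ by part (i). Since $\B_\chi$ is isotropic in $T^*\B$ and the symplectic form on $\ss$ is the restriction of the one on $T^*\B$, $\B_\chi$ is automatically isotropic in $\ss$; the half-dimension count then promotes it to Lagrangian. The step I expect to require the most care is verifying that the symplectic form on $\ss$ from Hamiltonian reduction matches the Poisson bracket on $\sn$ coming from its description as a transverse slice, so that $\pi$ is a symplectic (not merely smooth) resolution; once phrased as the Hamiltonian reduction of the symplectic resolution $\pi\colon T^*\B\to\N$ itself, however, this becomes a formal consequence of functoriality.
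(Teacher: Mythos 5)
Your treatment of part (i) mirrors the paper's own route very closely: you first establish the regularity of $\chi_\m$ for $\varpi\ccirc\pi$ via Corollary \ref{flat}(i), lift the isomorphism $M\times\sn\iso\N\cap(\chi+\m^\perp)$ to $M\times\ss\iso\si$, use freeness of the $M$-action to deduce smoothness, and then apply Marsden--Weinstein reduction to obtain the symplectic form. This is precisely the content of Proposition \ref{red_si}, which the paper proves separately and then feeds into Proposition \ref{red}(i). Your connectedness argument via Zariski's Main Theorem applied to the normal variety $\sn$ is a reasonable way to make precise the paper's terse remark that irreducibility of $\sn$ implies connectedness of $\ss$; the paper in fact offers a second route (the $\bullet$-contraction of $\ss$ onto the connected $\B_\chi$) that you do not use, but both are sound.

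Part (ii) is where there is a genuine gap. Your sentence ``I would invoke Spaltenstein's classical theorem that every Springer fiber $\B_x\sset T^*\B$ is nonempty, equidimensional, and Lagrangian in $T^*\B$, with $\dim\B_x=\dim\B-\half\dim(\Ad G(x))$'' is internally contradictory: a Lagrangian subvariety of $T^*\B$ has dimension $\dim\B$, but the dimension formula you quote gives $\dim\B_x<\dim\B$ whenever $x\neq0$. What you actually use in the next sentence is the weaker assertion that $\B_\chi$ is \emph{isotropic} in $T^*\B$, which is true --- but it is not part of Spaltenstein's theorem. Spaltenstein \cite{Spa} proves connectedness and equidimensionality of Springer fibers; he does not address the symplectic geometry at all. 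The isotropy of $\B_\chi$ in $T^*\B$ (equivalently, its isotropy in $\ss$, which is what the half-dimension count needs) is exactly the nontrivial input the paper supplies by a direct computation: one identifies tangent vectors to $\B_\chi$ at $(\b,\chi)$ as those $\ad^* a(y)+\alpha$ with $\alpha+\ad^*a(\chi)=0$, evaluates $\om_{T^*\B}$ on a pair of such vectors to obtain $\pm\chi([a,b])$, and concludes this vanishes because $\O\cap\b^\perp$ is isotropic in $\O$ by \cite[Theorem 3.3.7]{CG}. To close the gap you should either reproduce this computation or locate an explicit reference for the isotropy of Springer fibers in $T^*\B$ (your present citation does not supply one). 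Once that step is repaired, your half-dimension argument correctly promotes isotropic to Lagrangian, as in the paper.
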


Our next goal is to find a  Hamiltonian reduction construction
for the variety $\ss$. 
Specifically, we would like to get
an analogue of formula \eqref{red_var}
where  the variety $\sn$ is replaced by $\ss$ and where
the symplectic manifold $T^*\B$ plays the role of the Poisson variety $\N$.
To do so, it is natural to try to  replace, in  formula \eqref{red_var},
the space $\chi+\m^\perp$ by $\pi\inv(\chi+\m^\perp)$.
Thus, we are led to introduce a scheme
\beq{sigma}
\si:=\pi\inv(\chi+ \m^{\perp})=
(\varpi\ccirc\pi)\inv(\chi_\m) \sset T^*\B.
\eeq

\begin{Prop}\label{red_si}
 \vi The scheme $\si$ is a (reduced) smooth  connected
manifold, and we have
$\dim\si=\dim\B+\dim\B_\chi=\dim T^*\B-\dim\m.$

\vii   The scheme $\si$ is  $M$-stable,
and  the action-map induces an  $M$-equivariant isomorphism
\beq{isom_si}
M \times \ss\ \iso\ \si.
\eeq

\viii  The scheme $\si$ is a coisotropic submanifold in $T^*\B$;
the  nil-foliation on $\si$ coincides with the fibration by
$M$-orbits.
\end{Prop}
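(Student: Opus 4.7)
The plan is to treat the three parts in order, each leveraging machinery already in hand: part (i) will follow from Corollary \ref{flat}, part (ii) from the $M$-equivariant decomposition \eqref{isom_S}, and part (iii) from the observation that $\varpi\ccirc\pi$ is a moment map for the $M$-action on $T^*\B$.

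For part (i), I apply Corollary \ref{flat} with $Y:=T^*\B$ and $f:=\pi$. This is legitimate: $\pi$ is $G$-equivariant and $\chi\in\N=\pi(T^*\B)$. Part (i) of that corollary then says $\chi_\m$ is a regular value of $\varpi\ccirc\pi$, so by the regular value theorem $\si=(\varpi\ccirc\pi)\inv(\chi_\m)$ is smooth (hence reduced) of codimension $\dim\m$ in $T^*\B$, i.e.\ $\dim\si=2\dim\B-\dim\m=2\dim\B-\half\dim\O$. Spaltenstein's formula $\dim\B_\chi=\half(\dim\N-\dim\O)=\dim\B-\half\dim\O$ then rewrites this as $\dim\si=\dim\B+\dim\B_\chi$. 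Connectedness I will defer to the isomorphism in (ii).

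For part (ii), $M$-stability is immediate: $\chi+\m^{\perp}$ is $M$-stable by \S\ref{2}, and $\pi$ is $G$-equivariant, hence $\si=\pi\inv(\chi+\m^{\perp})$ is $M$-stable. To construct the isomorphism, consider the action morphism $a:M\times\ss\to\si$, $(m,x)\mto m\cdot x$, which is well-defined by what was just shown and is manifestly $M$-equivariant. For the inverse, given $y\in\si$ one has $\pi(y)\in\chi+\m^{\perp}$, so by \eqref{isom_S} there exist unique $m\in M$ and $s\in\s$ with $\pi(y)=m\cdot s$; the element $m\inv\cdot y$ then lies in $\pi\inv(s)\sset\pi\inv(\s)=\ss$, and $y\mto (m, m\inv y)$ provides the inverse to $a$. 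Applied the other way, this also yields the connectedness asserted in (i), since $M$ is unipotent (hence connected) and $\ss$ is connected by Proposition \ref{red}(i).

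For part (iii), the key conceptual point is that the projection $\pi:T^*\B\to\g^*$ is the moment map for the Hamiltonian $G$-action on $T^*\B$; composing with the restriction map $\varpi:\g^*\to\m^*$ therefore produces the moment map $\varpi\ccirc\pi$ for the Hamiltonian $M$-action on $T^*\B$. Since $\chi_\m$ is a fixed point of the coadjoint $M$-action on $\m^*$, standard symplectic-reduction theory applies: at any $x\in\si$, one has $T_x\si=\ker d(\varpi\ccirc\pi)_x=(\m\cdot x)^{\perp_\omega}$ (symplectic orthogonal), whence
\[
(T_x\si)^{\perp_\omega}\ =\ \m\cdot x\ =\ T_x(M\cdot x)\ \sset\ T_x\si,
\]
the inclusion holding because $\chi_\m$ is $M$-fixed. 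This shows simultaneously that $\si$ is coisotropic and that its null-distribution coincides pointwise with the tangent spaces of $M$-orbits; since the $M$-action on $\si$ is free (by the isomorphism of (ii)), these orbits have the correct dimension $\dim\m$ and integrate the distribution, identifying the null-foliation with the fibration by $M$-orbits.

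The main obstacle is really conceptual rather than technical: one must recognize the composite $\varpi\ccirc\pi$ as the $M$-moment map so that the standard fact ``preimage of a fixed point of the moment map is coisotropic, with null-leaves equal to group orbits'' can be invoked. All other steps are either direct invocations of Corollary \ref{flat} and \eqref{isom_S} or dimension bookkeeping.
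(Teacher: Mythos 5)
Your treatment of parts (ii) and (iii) matches the paper's: part (ii) is the lift of the isomorphism \eqref{isom_S} through $\pi$ (the paper references this, with what is evidently a typo for Proposition~\ref{ggnil}(iii)), and part (iii) is the standard fact about moment-map fibers over a fixed point, which you usefully spell out; the paper just cites \cite{GuS}. Part (i) is also the paper's argument as far as smoothness, reducedness and dimension go: both use Corollary~\ref{flat}(i) and the dimension bookkeeping via \eqref{dim_Bchi}.

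However, your connectedness argument is circular. You propose to deduce connectedness of $\si$ from the isomorphism $M\times\ss\iso\si$ together with connectedness of $M$ and of $\ss$, and you cite Proposition~\ref{red}(i) for the latter. But Proposition~\ref{red} is proved \emph{after} Proposition~\ref{red_si} and its proof explicitly relies on it: the smoothness of $\ss$ and the nondegeneracy of the restricted $2$-form in Proposition~\ref{red}(i) are deduced from the smoothness of $\si$, from \eqref{isom_si}, and from Proposition~\ref{red_si}(iii). The paper sidesteps this by observing that the $\bullet$-action, which commutes with $\pi$, contracts both $\ss$ and $\si$ onto $\B_\chi$, and then invoking Spaltenstein's theorem that $\B_\chi$ is connected; connectedness of $\ss$ and of $\si$ follow at once, with no appeal to Proposition~\ref{red}. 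Replacing your citation of Proposition~\ref{red}(i) by this contraction argument closes the gap, and the rest of your proof is sound.
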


\subsection{Proof of Propositions \ref{red} and \ref{red_si}}
There is a natural $\gm$-action on $T^*\B$ along the fibers 
of the cotangent bundle projection $T^*\B\to\B$. 
 Thus, the group $G\times \gm$ acts on $T^*\B$ by
$(g,z):\ x\mto z\cdot g(x)$.
Further, we define a $\bullet$-action of the torus $\gm$  on $T^*\B$
by the formula $\gm\ni t:\ x\mto t\bullet x:=t^{2}\cdot
\gamma_{t^{-1}}(x),$
cf. \eqref{bullet}.

The Springer resolution \eqref{TB}
is a  $G\times \gm$-equivariant morphism.
Hence, the map $\pi$ commutes with the $\bullet$-action
as well. 
It follows in particular that  $\B_\chi,\ \ss,$ and $\si$,
are all
$\bullet$-stable subschemes of $T^*\B$. 
The $\bullet$-action retracts $\chi+\m^\perp$ to $\chi$, hence,
provides a retraction of $\si=\pi\inv(\chi+\m^\perp)$ to $\B_\chi=\pi\inv(\chi)$.

Further, a result of Spaltenstein \cite{Spa} says
 that  the Springer fiber is a connected variety
and, moreover, all  irreducible
components of $\B_\chi$  have the same dimension, 
 cf. also \cite[Corollaries 7.6.16 and 3.3.24]{CG}, which is equal ~to
\beq{dim_Bchi}
\dim\B_\chi=\dim\B-\half\cdot\dim\O.
\eeq

\begin{proof}[Proof of Proposition \ref{red_si}]
Corollary \ref{flat}(i) insures that the point $\chi_\m\in\m^*$ is a regular
value of
the   map $\varpi\ccirc\pi: T^*\B\to \m^*$.
It follows, in particular, that $\si$ is a (reduced) smooth subscheme
of $T^*\B$ and that $\dim\si=\dim T^*\B-\dim\m.$
Furthermore, \eqref{isom_si} holds as a scheme theoretic isomorphism and
we have $\dim\si=\dim\ss+\dim\m$.

Since $\B_\chi$ is  connected and the
$\bullet$-action contracts $\ss$, resp. $\si$, to $\B_\chi$, we deduce that
$\ss$, resp.
$\si$, is a
connected manifold. This completes the proof of Proposition \ref{red_si}(i).

Part (ii) of the proposition 
is immediate from the isomorphism of Corollary \ref{ggnil}(i). Part
(iii) is
a general property of the fiber  of a moment map over a regular value, 
see eg. \cite{GuS}.
\end{proof}

\begin{proof}[Proof of Proposition \ref{red}] First of all,
we observe that
the smoothness of $\si$, combined with
\eqref{isom_si}, implies that $\ss$ is a smooth scheme. Furthermore,
from part (iii) of  Proposition \ref{red_si} and the isomorphism
$\ss\cong\si/M$ we deduce that the symplectic 2-form on $T^*\B$ restricts
to a nondegenerate 2-form on $\ss$. This yields Proposition \ref{red}(i).

Further, we know that  $\pi: \ss\onto\sn$ is a projective and dominant morphism
which is an isomorphism  over the open dense subset of
$\sn$ formed by regular nilpotent elements. It follows
that the map  $\pi: \ss\onto\sn$ is a symplectic resolution.
The scheme $\sn$ being irreducible,
we deduce that $\ss$ is connected (we have already proved this fact
differently
in the course of the proof of Proposition \ref{red_si}).

By Proposition
\ref{kost}, we get
$\dim\ss=\dim\sn=\dim\g-\rk\g-\dim\O.$ Hence,
using \eqref{dim_Bchi} and the equality $2\dim\B+\rk\g=\dim\g$,
we find
$\dim\ss=(2\dim\B+\rk\g)-\rk\g-2(\dim\B-\dim\B_\chi)=2\dim\B_\chi$.
This completes the proof of part (i) of Proposition \ref{red}.

To prove part (ii), let $y=(\b,\chi)\in\B_\chi$. Recall that any tangent vector
to $T^*\B$ at $y$ can be written in the form
$\ad^* a(y)+ \alpha$, for some $a\in\g$  and  some vertical
vector $ \alpha\in\b^\perp$ (i.e. a vector tangent to the fiber of the cotangent bundle).
It is clear that such a vector $\ad^* a(y)+ \alpha$ is tangent to
$\B_\chi$
if and only if one has $\alpha+\ad^* a(\chi)=0.$
Now, let $\ad^* b(y)+ \be$ be a second tangent vector at $y$  which is tangent to
$\B_\chi$
at the point $y$. Thus, we have $\ad^* a(\chi)=- \alpha$ and
$\ad^* b(\chi)=- \be$.

Using an explicit formula  for the symplectic 2-form $\om_{_{T^*\B}}$
on $T^*\B$, see eg. \cite[Proposition 1.4.11]{CG},   we find
\begin{multline*}
\om_{_{T^*\B}}(\ad^* a(y)+ \alpha,\ \ad^* b(y)+ \be)=
\chi([a,b])+ \alpha(b)-\be(a)\\
=\chi([a,b])-(\ad^* a(\chi))(b)+
(\ad^* b(\chi))(a)\\
=\chi([a,b])-\chi([a,b])-\chi([a,b])=
-\chi([a,b]).
\end{multline*}

Next, let  $\om_\O$ denote  the Kirillov-Kostant 2-form on the orbit
$\O$.
By definition, we have $\om_\O(\ad^* a(\chi), \ \ad^* b(\chi))=\chi([a,b])$.
Further, it is known that $\O\cap \b^\perp$ is an isotropic
subvariety in $\O$, cf. \cite[Theorem 3.3.7]{CG}.
The vectors $\ad^* a(\chi)=- \alpha,\
\ad^* b(\chi)=- \be\in\b^\perp$ are clearly tangent to that subvariety.
We deduce
$0=\om_\O(\ad^* a(\chi), \ \ad^* b(\chi))=\chi([a,b]).$ 
\end{proof}

\begin{rem}\label{slice} The smoothness statement in Proposition \ref{red}(i) is a special
case of the following elementary general result.

Let $X$ be a $G$-scheme, and let $F: X\to\g^*$ be a
$G$-equivariant morphism
such that $\chi\in F(X)$. Set ${\wt\s}:=F\inv(\s)$,
a scheme theoretic preimage of  $\s$,
and write $F_\s=F|_{{\wt\s}}:\ {\wt\s}\to \s$.

Then, for  any $x\in F\inv(\chi)$,
there are local isomorphisms
$(X, x)\iso (\O,\chi)\times ({\wt\s}, x)$, resp.
$(\g^*,\chi)\iso(\O,\chi)\times (\s,\chi)$,  in {\em \'etale} topology,
such that the map $F$ goes, under the isomorphisms, to
the map $\Id_\O\times F_\s: \ \O\times {\wt\s}
\to \O\times\s$.
\end{rem}

\subsection{}\label{tw_sec} Given a manifold $Y$, we write
$p:\ T^*Y\to Y$ for  the cotangent bundle projection.
 
Let $X\sset Y$
be a submanifold. Below, we will use the following
\begin{defn}  A submanifold $\Lambda\sset p\inv(X)$ is said
to be  a {\textsf{twisted conormal bundle}}
on $X$ if $\Lambda$ is a Lagrangian submanifold of $T^*Y$ and, moreover,
the map $p$ makes the projection
$\Lambda\to X$ an affine bundle.
\end{defn}

Let  $\ps$ be the restriction to $\si\sset T^*\B$
of the projection
 $p: T^*\B\to\B$.
It is clear that
$\ps(\si)$ is an $M$-stable  subset of $\B$, and one has the
following diagram of $M$-equivariant maps
\beq{diagram}
\xymatrix{
&\si\ar@{->>}[dl]_<>(.5){\ps}\ar@{->>}[dr]^<>(.5){\pi}&\\
\ps(\si) &&\N\cap(\chi+ \m^{\perp})
}
\eeq

\begin{Prop}\label{twisted} \vi  For a Borel subalgebra $\b$,
we have: $\, \b\in \ps(\si)\en\Leftrightarrow\en
\chi|_{\m\cap\b}=0.$
\vskip 1pt

\vii For any $M$-orbit $X\sset \ps(\si)$, the map $\ps$
makes the projection $\ps\inv(X)\to X$  a twisted conormal bundle
on the submanifold $X\sset\B$.
\end{Prop}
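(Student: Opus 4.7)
The plan is a direct computation from the definitions, using the standard identifications $T_\b\B = \g/\b$ and $T^*_\b\B = \b^\perp$, together with two structural facts already at hand: $\chi$ vanishes on $[\m,\m]$, and $\chi + \m^\perp$ is $\Ad M$-stable (because $\m^\perp$ is $\Ad M$-stable and $\chi_\m \in \m^*$ is an $\Ad M$-fixed point).

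For (i): by the definition of $\si$, a pair $(\la,\b)$ lies in $\si$ iff $\la \in \b^\perp \cap (\chi + \m^\perp)$, so $\b \in \ps(\si)$ iff $\chi \in \b^\perp + \m^\perp$. The standard identity $\b^\perp + \m^\perp = (\b \cap \m)^\perp$ then translates this into $\chi|_{\b \cap \m} = 0$.

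For (ii): fix $\b_0 \in X$ and $\la_0 \in \ps\inv(\b_0) = \b_0^\perp \cap (\chi+\m^\perp)$. The fiber over $\b_0$ is the affine subspace $\la_0 + (\b_0 + \m)^\perp \sset \b_0^\perp$. Under $T_{\b_0}\B = \g/\b_0$, the tangent to the $M$-orbit is $T_{\b_0}X = (\m+\b_0)/\b_0$, so its conormal $N^*_{\b_0}X = (\m+\b_0)^\perp$ is exactly the direction of this affine fiber. Since $\chi + \m^\perp$ is $M$-stable, $M$-translation sends $\ps\inv(\b_0)$ onto $\ps\inv(m\cd\b_0)$ affinely, so $\ps: \ps\inv(X) \to X = M\cd\b_0$ is an affine bundle modeled on $N^*X$. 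The dimension count
$$
\dim \ps\inv(X) = \dim X + \dim(\b_0+\m)^\perp = \bigl(\dim\m - \dim(\m\cap\b_0)\bigr) + \bigl(\dim\g - \dim(\m+\b_0)\bigr) = \dim\g - \dim\b_0 = \dim\B
$$
shows $\ps\inv(X)$ has Lagrangian dimension, so it remains only to check isotropy. Write tangent vectors at $y=(\la_0,\b_0)$ in the form $\ad^*a(y) + \alpha$ with $a \in \m$ and $\alpha \in (\m+\b_0)^\perp$; the $\m \cap \b_0$ ambiguity is absorbed into the vertical direction because $\la_0 \in \b_0^\perp$ forces $\ad^*a(\la_0)$ (for $a \in \m\cap\b_0$) into $(\m+\b_0)^\perp$, using $\la_0 - \chi \in \m^\perp$ and $\chi([\m,\m]) = 0$. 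Then the standard formula for $\om_{T^*\B}$ (as used in the proof of Proposition \ref{red}(ii)) gives
$$
\om_{T^*\B}\bigl(\ad^*a(y)+\alpha,\ \ad^*b(y)+\be\bigr) = \la_0([a,b]) + \be(a) - \alpha(b);
$$
the last two terms vanish since $\alpha,\be \in \m^\perp$ and $a,b \in \m$, while $\la_0([a,b]) = \chi([a,b])$ (because $[a,b] \in \m$ and $\la_0-\chi \in \m^\perp$) vanishes as $\chi$ kills $[\m,\m]$.

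The main obstacle is the careful bookkeeping for the $M$-equivariant trivialization: one must check that the affine fibers match $N^*X$ (not merely have the right dimension), and that the vertical contributions from $a \in \m \cap \b_0$ genuinely lie in $(\m+\b_0)^\perp$ so that tangent vectors to $\ps\inv(X)$ really are spanned as claimed. Once this is in place, the same two inputs --- $\chi|_{[\m,\m]} = 0$ and the $M$-stability of $\chi + \m^\perp$ --- simultaneously yield the twisted-conormal-bundle structure and the Lagrangian property.
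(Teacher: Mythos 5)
Your part (i) is identical to the paper's: unwind the definition of $\ps\inv(\b)$ and use $\b^\perp+\m^\perp=(\b\cap\m)^\perp$. For part (ii) you diverge in one meaningful way. The paper reduces the claim to the standard fact that the $M$-homogeneous fibration $M\times_{M\cap B}\bigl(\la_\b+(\m^\perp\cap\b^\perp)\bigr)\to M/M\cap B$ is a twisted conormal bundle, citing \cite[Proposition 1.4.14]{CG}; its work consists of identifying $\ps\inv(X)$ with that fibration and recognizing $\m^\perp\cap\b^\perp=(\m+\b)^\perp$ as the conormal space to $T_\b X$. You instead prove the Lagrangian property from scratch: after the same identification of the fiber with $\la_0+(\m+\b_0)^\perp$ and the same dimension count, you show every tangent vector to $\ps\inv(X)$ has the form $\ad^*a(y)+\alpha$ with $a\in\m$, $\alpha\in(\m+\b_0)^\perp$ (including the careful check that for $a\in\m\cap\b_0$ the vector $\ad^*a(\la_0)$ lands in $(\m+\b_0)^\perp$, using $\la_0-\chi\in\m^\perp$ and $\chi|_{[\m,\m]}=0$), and then verify isotropy directly from the explicit formula for $\om_{T^*\B}$ — the same computation the paper uses in its proof of Proposition \ref{red}(ii), but applied here where the paper instead cites a known result. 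Both are correct; the paper's citation is shorter, while your version is self-contained and makes visible that the two hypotheses $\chi|_{[\m,\m]}=0$ and $M$-stability of $\chi+\m^\perp$ are exactly what is being used.
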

\begin{rem} 
Let $\B^\circ$ be the set of all Borel subalgebras $\b$ such that $\b\cap\m=0$.
It is easy to see that  $\B^\circ$ is an $M$-stable,  Zariski open and
dense subset of $\B$. Part (i) of Proposition \ref{twisted} implies
that we have  $\B^\circ\sset \ps(\si).$
\end{rem}

\begin{proof} Clearly,  $\b\in \ps(\si)$ if and only if the fiber
$\ps\inv(\b)$ is non-empty.
By definition, we have
\beq{p_fib}
\ps\inv(\b)=\{(\la,\b)\in\g^*\times\B\mmid \la\in\b^\perp
\enspace\text{and}\enspace \la\in \chi+\m^\perp\}\cong \b^\perp
\cap(\chi+\m^\perp).
\eeq
Note that  $\chi|_{\m\cap\b}=0$ says that  $\chi\in (\b\cap\m)^\perp.$
Thus, the equivalences below yield  part (i),
$$\b\in \ps(\si)
\enspace \Longleftrightarrow\enspace
\ps\inv(\b)\neq\emptyset
\enspace \Longleftrightarrow\enspace
\b^\perp
\cap(\chi+\m^\perp)\neq\emptyset
\enspace \Longleftrightarrow\enspace
\chi\in \b^\perp+\m^\perp=(\b\cap\m)^\perp.
$$

To prove (ii), fix a point $\b\in \ps(\si)$,
and let $X:=M\cdot\b\sset\B$ be the $M$-orbit of the point $\b$.
Thus, writing $B$ for the Borel subgroup  corresponding
to $\b$, we have
$X\cong M/M\cap B$.
Further, we may use the last displayed formula above and choose
$\la_\b\in\b^\perp$ and $\mu_\b\in\m^\perp$ such that
$\chi=\la_\b+\mu_\b$.

\begin{claim}\label{cl}
The map $\ps: \ps\inv(X)\to X$
is $M$-equivariantly isomorphic to the fibration
$\phi:\ M\times_{M\cap B}\bigl(\la_\b+(\m^\perp\cap\b^\perp)\bigr)\to
M/M\cap B$.
\end{claim}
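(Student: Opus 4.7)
The plan is to compute the fiber $\ps\inv(\b)$ explicitly, identify it with the affine space appearing in $\phi$, and then exploit the transitive $M$-action on $X=M\cdot\b\cong M/(M\cap B)$ to realize $\ps\inv(X)$ as the corresponding homogeneous fiber bundle over $X$.

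First I would apply formula \eqref{p_fib} to obtain $\ps\inv(\b)=\b^\perp\cap(\chi+\m^\perp)$. Substituting $\chi=\la_\b+\mu_\b$ and absorbing $\mu_\b\in\m^\perp$ into the second summand rewrites this as $\b^\perp\cap(\la_\b+\m^\perp)$, which a direct check (using $\la_\b\in\b^\perp$) simplifies to $\la_\b+(\b^\perp\cap\m^\perp)$. This is exactly the affine subspace appearing in the target of $\phi$.

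Next I would note that the $M$-stabilizer of $\b\in\B$ is $M\cap B$, where $B\sset G$ is the Borel subgroup corresponding to $\b$. Since $M\cap B\sset B$ preserves $\b^\perp$, and $M$ preserves $\chi+\m^\perp$ (as already used in deriving the isomorphism \eqref{isom_S}), the group $M\cap B$ acts on the fiber $\ps\inv(\b)$. The obvious $M$-equivariant map $M\times\ps\inv(\b)\to\ps\inv(X)$, $(m,(\la,\b))\mto(m\cdot\la,\ m\cdot\b)$, then descends to an $M$-equivariant morphism
\[\Phi:\ M\times_{M\cap B}\ps\inv(\b)\too\ps\inv(X)\]
over $X$. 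Bijectivity of $\Phi$ is immediate: surjectivity from the transitivity of $M$ on $X$, and injectivity from the identification of the stabilizer of $\b$ as $M\cap B$. Since both sides are $M$-equivariant smooth fiber bundles over the homogeneous space $X$ and $\Phi$ restricts to the identity on the fiber over $\b$, $\Phi$ is an isomorphism.

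The single point requiring care, and which I would regard as the main (mild) obstacle, is to verify that the twisted product on the left is well-defined, i.e.\ that $M\cap B$ genuinely preserves the affine subspace $\la_\b+(\m^\perp\cap\b^\perp)$. This amounts to showing $m\cdot\la_\b-\la_\b\in\b^\perp\cap\m^\perp$ for every $m\in M\cap B$, which follows at once from the two invariances cited above combined with the identification $\chi+\m^\perp=\la_\b+\m^\perp$.
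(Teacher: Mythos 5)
Your proposal is correct and follows essentially the same route as the paper's proof: compute the fiber $\ps\inv(\b)=\la_\b+(\b^\perp\cap\m^\perp)$ exactly as you do, then observe that $M$-equivariance of $\ps$ gives $\ps\inv(M\cdot\b)\cong M\times_{M\cap B}\ps\inv(\b)$. The only difference is that you spell out the verification that $M\cap B$ preserves the affine fiber and that the induced map $\Phi$ is an isomorphism, which the paper treats as immediate.
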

 The above claim implies part (ii) since
the   fibration $\phi$, of the claim, is well known to
be a twisted conormal bundle
on the submanifold $X\sset\B=G/B$, cf. \cite[Proposition 1.4.14]{CG}.

To prove Claim \ref{cl}, we write
 $\chi+\m^\perp=\la_\b+\mu_\b+\m^\perp=\la_\b+\m^\perp.$
Hence, using \eqref{p_fib},
we may write
$$\ps\inv(\b)=\b^\perp
\cap(\chi+\m^\perp)=\b^\perp
\cap(\la_\b+\m^\perp)=\{\la\in\b^\perp\mmid \la-\la_\b\in\m^\perp\}=
\la_\b+(\b^\perp\cap\m^\perp),
$$
which is an affine-linear subspace in $\b^\perp$
obtained from the vector space $\b^\perp\cap\m^\perp\sset \b^\perp$
by translation. It follows that
$\ps\inv(M\cdot \b)\cong M\times_{M\cap B}\ps\inv(\b)\cong
M\times_{M\cap B}\bigl(\la_\b+(\m^\perp\cap\b^\perp)\bigr)$.

Now, we  identify
the tangent space to $\B$, resp. to the $M$-orbit $X$, at the point $\b$ with 
$T_\b\B=\g/\b$, resp. 
with $T_\b{X}=(\m+\b)/\b\sset\g/\b$.
We see that the vector space $\b^\perp\cap\m^\perp=(\m+\b)^\perp$
equals the annihilator of $T_\b{X}$ in $T_\b\B.$
This proves Claim \ref{cl}, and part (ii) follows.
\end{proof}

\section{Quantization}
\subsection{The Premet algebra}
Given a Lie algebra ${\mathfrak k}$, we write
 $\sym {\mathfrak k}$
for the Symmetric, resp.
  $\U{\mathfrak k}$ for the enveloping,
algebra of  ${\mathfrak k}$.
 We keep the notation of the previous section, fix $\chi\in \N$,
and define a linear map $\m\to\sym \g$,
 resp. $\m\to\ug$, by the assignment $m\mto m-\chi(m)$. Let $\m_\chi$
denote the image. Using the canonical
isomorphism $\C[\g^*]=\sym \g$, one can identify
$\mc$ with the space of degree $\leq 1$ polynomials
on $\g^*$ vanishing on $\chi+\m^\perp.$

Let $\zg$ denote the center of $\ug$
and write $\Specm\zg$ for the set of maximal ideals of the
algebra $\zg$. 
Given  $c\in\Specm\zg$, let $\I_c\sset \zg$ denote the corresponding
maximal ideal,
and put $\U_c:=\ug/\ug\cdot\I_c$.
Let $\U_c\cdot\mc$ denote the left ideal of the algebra $\U_c$
generated by the image of the composite
$\mc\into\ug\onto\U_c$.

Similarly, let $I_o$ denote  the augmentation ideal of the algebra
 $(\sym \g)^{\Ad G},$ and put $S_o:=\sym \g/\sym \g\cdot I_o$. Thus, $S_o=\C[\N]$.
Let $S_o\cdot\mc$ denote an ideal of the algebra $S_o$
generated by the image of the composite
$\mc\into\sym\g\onto S_o$. It is clear that we have 
\beq{put1}\C[\N\cap(\chi+\m^\perp)]=S_o/S_o\cd\mc;
\quad\text{by analogy, we put}\quad
Q_c:=\U_c/\U_c\cd\mc.
\eeq

The left ideal  $\U_c\cdot\mc\sset \U_c$, resp. the  ideal 
$S_o\cd\mc\sset S_o$, is 
$\ad\m$-stable.
It is clear that the scheme isomorphism \eqref{red_var} translates into the
following  algebra isomorphism
\beq{put2}
\C[\sn]\cong
(S_o/S_o\cd\mc)^{\ad \m};\quad\text{by analogy, we put}\quad
A_c:=(\U_c/\U_c\cd\mc)^{\ad \m}.
\eeq

It is easy to see that multiplication in $\ug$ gives rise
to a well defined (not necessarily commutative)
associative algebra structure on $A_c$. Furthermore,
the above  formulas show that the algebra $\C[\sn]$
is obtained from   $\C[\N]$ 
by a
{\em classical Hamiltonian reduction},
resp. the algebra $A_c$ is obtained from 
$\U_c$ by a
{\em quantum  Hamiltonian reduction}.
 In particular, $\C[\sn]$  has a natural  Poisson algebra
 structure.
The algebra $A$
The family of algebras $\
\{A_c,\ c\in \Specm\zg\}$  may be viewed
as `quantizations' of the Poisson algebra $\C[\sn].$

\begin{rem}\label{A} Each of the algebras $A_c$ is a quotient
of a single algebra $A:=(\ug/\ug\cdot\mc)^{\ad\m}$.
This algebra $A$, that has been introduced and studied
by  Premet
in \cite{premet}, is
a Hamiltonian reduction of the algebra
$\ug$. The natural imbedding $\zg\into\ug$ descends to
a well-defined algebra map $\jmath:\ \zg\to A$ with central image.
It is easy to see that,
for any central character $c$, one has $A/A\cdot\jmath(\I_{c})=A_{c}$.
\end{rem}

It is immediate to check that, for any right $\U_c$-module
$N$, the assignment $u:\ n\mto nu$ induces a well defined right
$A_c$-action on the coinvariant space $N/N\cd\mc.$
This yields, in particular, a right
$A_c$-action on $Q_c$, cf. \eqref{put1}, that commutes with the  natural left
$\U_c$-action. In this
way, $Q_c$ becomes an $(\U_c,A_c)$-bimodule. Moreover, the right
action of $A_c$
gives
an algebra
isomorphism  $A_c^{\op}\iso \End_{\U_c}\,Q_c$.

 \subsection{Kazhdan filtrations} Given an algebra
$A$ with an ascending $\Z$-filtration $F_\idot A$, one puts  $\gr_F A:=\bplus_{n\in\Z}\, F_n
A/F_{n-1}A$, an
associated graded algebra, resp.  $\ree{F} A:=\bplus_{n\in\Z}\, F_n
A$, the
Rees algebra.
From now on, we assume that $\gr_F A$ is  a finitely generated commutative algebra.

Let $V$ be a left $A$ module equipped with an ascending $\Z$-filtration
$F_\idot V$ which is compatible with the one on $A$.
Then,  $\ree{F}V:=\bplus_{n\in\Z}\, F_nV$ acquires the
structure of a  left $\ree{F} A$-module. The filtration on $V$ is called
{\em good} if that module  is a finitely generated.
In such a case, $\gr_F V:=\bplus_{n\in\Z}\, F_nV/F_{n-1}V$  is a
finitely generated 
$\gr_F A$-module, and
the {\em support} of $\gr_F V$ is a closed subset
$\supp(\gr_F V)\sset \Spec(\gr_F A)$ (equipped with reduced scheme
structure).

The following standard result is well known, cf.
 \cite{ABO}, Theorem 1.8 and Proposition 2.6.

\begin{Lem}\label{good} Let  $F_\idot A$ be a  $\Z$-filtration on an
algebra $A$ such that $\ree{F}A$ is  both left and
 right  noetherian, and $\gr_F A$ is commutative. Then, for any   left $A$-module $V$,
we have
\vskip 2pt

\vi All good filtrations on $V$ are equivalent to each other
and the set $\var V:=\supp(\gr_F V)$ is independent of the choice of such a filtration.

\vii For any good filtration $F_\idot V$ and any  $A$-submodule $N\sset V$,
the induced filtration
$F_\idot N:=N\cap F_\idot V$
is a good filtration on $N$.\qed
\end{Lem}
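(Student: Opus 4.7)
The plan is to work throughout with the Rees picture, exploiting the standard equivalence between good filtrations $F_\bullet V$ and finitely generated graded $\ree{F}A$-module structures on $\ree{F}V = \bplus_n F_n V$; the noetherian hypothesis on $\ree{F}A$ then carries most of the weight.

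For part (i), I would begin by showing that any two good filtrations $F_\bullet V$ and $F'_\bullet V$ are \emph{equivalent}: there is some $k \geq 0$ with $F_n V \sset F'_{n+k} V$ and $F'_n V \sset F_{n+k} V$ for every $n$. Pick generators $v_1, \ldots, v_r$ of $\ree{F}V$ over $\ree{F}A$ with $v_i \in F_{n_i} V$; each $v_i$ lies in some $F'_{m_i} V$, and taking $k := \max_i(m_i - n_i)$, the identity $F_n V = \sum_i F_{n-n_i} A \cdot v_i$ forces $F_n V \sset F'_{n+k} V$, the reverse inclusion following by symmetry of the roles of $F$ and $F'$. From equivalence one then deduces $\supp \gr_F V = \supp \gr_{F'} V$ by interposing the sum filtration $F''_\bullet := F_\bullet + F'_\bullet$ (which is again good, since its Rees module is a quotient of $\ree{F}V \oplus \ree{F'}V$) and checking that the comparison maps between $\gr_F V$, $\gr_{F''} V$, $\gr_{F'} V$ induced by the filtration inclusions exhibit these three $\gr A$-modules as having the same annihilator radical up to a bounded grading shift.

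For part (ii), the Rees translation makes the statement immediate. The graded subspace $\ree{F}N := \bplus_n (N \cap F_n V)$ is a left $\ree{F}A$-submodule of $\ree{F}V$; since $F_\bullet V$ is good, $\ree{F}V$ is a finitely generated $\ree{F}A$-module, and since $\ree{F}A$ is left noetherian the submodule $\ree{F}N$ is finitely generated too. That is precisely the assertion that the induced filtration $F_\bullet N := N \cap F_\bullet V$ is good on $N$.

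The step requiring the most care is the support-invariance half of (i): the filtration-level inclusions $F_\bullet V \sset F'_{\bullet + k} V$ do not in general descend to injective or surjective $\gr A$-linear maps on the associated gradeds on the nose, and one has to track the resulting comparison maps through the sum filtration $F''$ to extract the equality of annihilator radicals. Once that bookkeeping is handled, the rest of the argument reduces to routine manipulations with generators over the noetherian Rees ring.
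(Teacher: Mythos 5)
The paper itself gives no proof of this lemma: it is stated with a bare \texttt{\textbackslash qed} and a citation to Asensio--Van~den~Bergh--Van~Oystaeyen, Theorem~1.8 and Proposition~2.6. So you are supplying an argument where the paper supplies a reference, and your argument is the standard Rees-module one that ABO in effect formalize.

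Your part (ii) and the equivalence-of-filtrations half of part (i) are correct and complete as written. The one place that deserves scrutiny is exactly the place you flag, the support-invariance half of (i). Interposing the sum filtration $F''_\bullet = F_\bullet + F'_\bullet$ is the right move, and your observation that $\ree{F''}V$ is a graded $\ree{F}A$-module quotient of $\ree{F}V\oplus\ree{F'}V$ (hence $F''$ is good) is correct. But the phrase about the three associated gradeds ``having the same annihilator radical up to a bounded grading shift'' does not by itself deliver the conclusion: the comparison maps $\gr_F V\to\gr_{F''}V$ and $\gr_{F'}V\to\gr_{F''}V$ are neither injective nor surjective in general, and their joint surjectivity onto $\gr_{F''}V$ only gives $\supp\gr_{F''}V\subset\supp\gr_F V\cup\supp\gr_{F'}V$, which is the wrong inclusion. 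The mechanism that actually works is a one-step interpolation chain: set $G^j_n:=F_n+F'_{n-j}$. Each $G^j$ is good (same Rees-quotient argument), $G^0=F''$, $G^j=F$ for $j\gg0$ by equivalence, and consecutive filtrations satisfy $G^{j+1}_n\subset G^j_n\subset G^{j+1}_{n+1}$. This one-step sandwiching produces two short exact sequences of $\gr A$-modules involving $\gr_{G^j}V$, $\gr_{G^{j+1}}V$, and the auxiliary modules $\bigoplus_n G^j_n/G^{j+1}_n$ and $\bigoplus_n G^{j+1}_{n+1}/G^j_n$, from which $\supp\gr_{G^j}V=\supp\gr_{G^{j+1}}V$ follows. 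Chaining gives $\supp\gr_F V=\supp\gr_{F''}V$, and by symmetry $\supp\gr_{F'}V=\supp\gr_{F''}V$. With that substitution your proof is complete and matches the standard (ABO) route.
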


Given a vector space $V$ equipped with an  ascending
$\Z$-filtration $F_\idot V$ and with a  direct sum decomposition
$V=\bplus_{a\in\C}\ V\langle a\rangle$, one defines
an associated {\em  Kazhdan filtration},
a new  ascending $\Z$-filtration on $V$, as follows
\beq{FK} 
\kk_n V:=\sum_{a+2j\leq n}\  \big(F_jV\  \cap\   V\langle a\rangle\big).
\eeq

Let $\ug=\bplus_{i\in\Z}\ \ug\langle i\rangle$ be the $\Z$-grading induced by
{\em some}  Lie algebra grading $\g=\bplus_{i\in\Z}\ \g\langle
i\rangle$.
We may take $V:=\ug$ and let $F_j V:=\U^{\leq j}\g,\
j=0,1,\ldots,$ be the canonical ascending PBW filtration on the
enveloping algebra.
 Then, formula \eqref{FK} gives an associated  Kazhdan filtration
$\kk_\idot\ug$, on $\ug$. This
filtration is multiplicative, i.e.,
one has $\kk_i\ug\cdot\kk_j\ug\sset\kk_{i+j}\ug,\ \forall i,j$.

For any $i\in\Z$, we have $\g\langle i\rangle\in \kk_{i+2}\ug$.
We see  as in \cite[\S4.2]{GG} that the identity map $\g\to\g$
extends to  a Poisson algebra isomorphism  $\gr_\kk\ug\cong\sym\g$.
Further, it follows easily that the algebra $\ree{\kk}\ug$ is isomorphic to
a quotient of $T\g\o\C[t]$ by the two-sided ideal generated
by the elements $x\o y-y\o x - [x,y]\o t^2,\ x,y\in\g$.
Thus,
 $\ree{\kk}\ug$,  is a finitely generated algebra. Moreover, $\ree{\kk}\ug$,
viewed as an algebra {\em without grading},
is {\em independent}  of the grading on $\g$ used in \eqref{FK} (for $V=\ug$).
For the trivial grading $\g=\g\langle0\rangle$, the 
filtration $\kk_\idot\ug$ is clearly non-negative, and the corresponding
algebra
 $\ree{\kk}\ug$ is easily seen to be  both left and
 right
noetherian. It follows that  $\ree{\kk}\ug$ is  both left and
 right noetherian for {\em any} Lie algebra $\Z$-grading on $\g$.
In particular, Lemma \ref{good} applies.

From now on, we let $\kk_\idot\ug$ be
the  Kazhdan filtration associated with the grading \eqref{weight}
by formula \eqref{FK}.
Given a left $\ug$-module $V$, one may consider ascending
filtrations $F_\idot V$ which are compatible with the PBW filtration on $\ug,$
in the sense that $\U^{\leq i}\g\cdot F_j V\sset F_{i+j}V$
holds
for any $i,j\in\Z$.
One may also consider  filtrations  $\kk_\idot V$
which are compatible with the above defined Kazhdan 
filtration on $\ug$, to  be  referred to as
Kazhdan filtrations on $V$.

Assume, in addition, that  the $h$-action on $V$ is locally finite.
Then  there is a direct sum decomposition
$V=\bplus_{a\in\C}\ V\langle a\rangle$ where,
for any $a\in\C$, one defines a generalized $a$-eigenspace by the formula
$V\langle a\rangle:=\{v\in V\mid \exists N=N(v): \; (h-a)^Nv=0\}.$
For any  $h$-stable ascending filtration   $F_\idot V$ which is
compatible with the PBW filtration on $\ug$,
formula \eqref{FK}  gives an associated
   Kazhdan filtration $\kk_\idot V$, on $V$. Clearly,
 one has $F_jV=\bplus_{a\in\C}\ \big(F_jV\  \cap\ V\langle
a\rangle\big).$  Using this,  one obtains by an appropriate `re-grading procedure'
a canonical  isomorphism of  $\sym\g$-modules, resp. $\ree{\kk}\ug$-modules,
(that does {\em not} necessarily respect the natural gradings):
\beq{F=K}
\gr_FV\cong\gr_\kk V,\quad\text{resp.}\quad\ree{F}V\cong\ree{\kk}V.
\eeq

\begin{Cor}\label{FK_cor} Let  $V$ be a finitely generated $\ug$-module
such that  the $h$-action on $V$ is locally finite. Then, one has

\vi For any good  $h$-stable filtration  $F_\idot V,$ on $V$,
formula \eqref{FK} gives a good and separated  Kazhdan filtration on $V$.

\vii Any  good Kazhdan filtration  $\kk'_\idot V$, on $V$,
 is equivalent to
 \eqref{FK}. Hence, it is a separated filtration and,
 in $\g^*$, one has $\var_{\kk'} V=\var_F V$ (set theoretic equality).
\end{Cor}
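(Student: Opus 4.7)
The plan is to treat the two parts essentially independently: part (i) is a direct check using the re-grading isomorphism \eqref{F=K}, and part (ii) is a formal consequence of Lemma \ref{good}(i) applied in the Kazhdan setting.

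For part (i), I first need to verify that \eqref{FK} does produce a filtration on $V$ that is compatible with $\kk_\idot\ug$. This reduces to a weight-and-degree computation: if $u\in \ug\langle a_1\rangle\cap \U^{\leq k_1}\g$ with $a_1+2k_1\leq i$ and $v\in V\langle a_2\rangle\cap F_{k_2}V$ with $a_2+2k_2\leq j$, then $uv\in V\langle a_1+a_2\rangle\cap F_{k_1+k_2}V$ since $F$ is compatible with the PBW filtration and $h$ acts by weights on both factors. Hence $uv\in \kk_{i+j}V$. Next, since each $F_jV$ is $h$-stable and the $h$-action on $V$ is locally finite, the decomposition $F_jV=\bplus_a(F_jV\cap V\langle a\rangle)$ holds, and an elementary reorganization gives
\[
\kk_n V\ =\ \bplus_{a\in\C}\ \big(F_{\lfloor(n-a)/2\rfloor}V\ \cap\ V\langle a\rangle\big).
\]
This is exactly the re-grading alluded to in \eqref{F=K}; it gives a $\sym\g$-module isomorphism $\gr_\kk V\cong\gr_F V$ and a $\ree{\kk}\ug$-module isomorphism $\ree{\kk}V\cong\ree{F}V$ (the latter being a consequence of the fact, noted before the corollary, that $\ree{\kk}\ug$ and $\ree{F}\ug$ are the same algebra without grading). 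Goodness of the Kazhdan filtration is then inherited from goodness of $F$, and separatedness follows from the displayed formula: $\bigcap_n\kk_n V=\bplus_a\bigcap_j(F_jV\cap V\langle a\rangle)=0$, since a good filtration on a finitely generated module over a noetherian filtered ring is separated (Artin--Rees).

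For part (ii), I invoke Lemma \ref{good}(i) with the filtered algebra $(\ug,\kk_\idot)$. The hypotheses of that lemma are verified in the paragraph preceding the corollary: $\ree{\kk}\ug$ is both left and right noetherian, and $\gr_\kk\ug\cong\sym\g$ is commutative. Consequently, any two good Kazhdan filtrations on the finitely generated $\ug$-module $V$ are equivalent. Applying this to $\kk'_\idot V$ and the Kazhdan filtration of part (i), there exists $N$ with $\kk_{n-N}V\subset\kk'_n V\subset\kk_{n+N}V$ for all $n$. Separatedness of $\kk'$ is then immediate from separatedness of $\kk$. For the variety equality, Lemma \ref{good}(i) itself asserts that $\var_{\kk'}V=\var_\kk V$ in $\Spec(\sym\g)=\g^*$, and $\var_\kk V=\var_F V$ follows from the isomorphism $\gr_\kk V\cong\gr_F V$ of part (i).

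The main obstacle is really the bookkeeping behind \eqref{F=K}, namely showing that the re-grading isomorphism $\gr_\kk V\cong \gr_F V$ respects the $\sym\g$-module structure. Since $\sym\g$ is itself obtained from $\ug$ by an analogous re-grading and the direct sum decompositions on both sides are fully compatible, this is a routine (if slightly fiddly) index manipulation; once it is granted, everything else is a direct application of the noetherian formalism recalled in Lemma \ref{good}.
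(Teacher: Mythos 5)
Your proof is correct and follows essentially the same route as the paper: establish the re-grading identification of Rees modules $\ree{\kk}V\cong\ree{F}V$, deduce goodness and separatedness of the Kazhdan filtration from the corresponding properties of $F_\idot V$, and then apply Lemma \ref{good} for part (ii). One small correction: your justification of separatedness of $F_\idot V$ via Artin--Rees is not quite right. It is false in general that a good filtration over a ring with noetherian Rees algebra is separated (take the degenerate filtration $F_jB=B$ for all $j\in\Z$: its Rees ring $B[t,t^{-1}]$ is noetherian, yet no nonzero module has a separated good filtration). What saves the argument here is that the PBW filtration on $\ug$ is \emph{non-negative}, so $\ree{F}\ug$ is non-negatively graded and therefore every good filtration on a finitely generated module is automatically bounded from below, hence separated; this is the reason the paper gives, and no appeal to Artin--Rees is needed.
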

\begin{proof}  Let  $F_\idot V$ be a good
 $h$-stable  filtration. Clearly, it is bounded from below,
hence, separated. It follows that
$\ree{F}V$ is a finitely generated $\ree{F}\ug$-module and,
moreover, we have $\cap_{j\in\Z}\ t^j\cdot\ree{F}V=0$.
Thus,  from \eqref{F=K}  we deduce
that $\ree{\kk}V$ is a finitely generated $\ree{\kk}\ug$-module and,
moreover, we have $\cap_{j\in\Z}\ t^j\cdot\ree{\kk}V=0$.
This yields (i). Part (ii) is now a consequence of Lemma \ref{good}.
\end{proof}

From now on, in the setting of Corollary \ref{FK_cor},
we will use simplified notation $\var V$ for  $\var_{\kk'} V=\var_F V.$ 
 
\subsection{Whittaker functors} Recall  the space
 $Q_{c}=\U_{c}/\U_{c}\m_\chi$, which is an
$(\U_{c},A_{c})$-bimodule.

Associated with
 any left $\ug$-module, resp. $\U_c$-module, $V$, is its {\em Whittaker subspace}:
\beq{whdef}\Hom_{\U_c}(Q_c, V)=\Wh^\m(V) :=
\{ v \in V \mmid mv=\chi(m)v\,,\,\forall m \in \m \}.
\eeq

The right
 $A_{c}$-action
on $Q_{c}$ gives the space $\Wh^\m V$ a structure of {\em left}
$A_{c}$-module. Similarly, for any {\em right} $\U_{c}$-module $N$, the right
 $A_{c}$-action
on $Q_{c}$ gives the coinvariant space
$N/N\m_\chi=N\o_{\U_{c}}\,Q_{c}$ a structure of {\em right}
$A_{c}$-module.

Below, we are also  going to use a version of the  Whittaker functor for
 bimodules.
Given an $(\U_{c'},\U_{c})$-bimodule $K$, the subspace
$K\m_\chi\sset K$ is stable under the
 adjoint $\m$-action on
$K$. Hence, the latter action descends to a well defined $\ad\m$-action
on $K/K\m_\chi$. It is clear that, for any
$x\in K/K\m_\chi$ and $m\in \m$, one has
$mx=\ad m(x) +\chi(m)\cdot x$. We see in particular that
 the
 adjoint $\m$-action and  the above defined right
$A_{c}$-action on $K/K\m_\chi$ commute.

Finally, we put
\beq{whb}\whb(K):=\Hom_{\U_{c'}}(Q_{c'},\ K\o_{\U_{c}}\,Q_{c})=
(K/K\m_\chi)^{\ad \m}.
\eeq

The right $A_{c'}$-action on $Q_{c'}$ and the  right
 $A_{c}$-action
on $Q_{c}$ make $\whb(K)$ an $(A_{c'},A_{c})$-bimodule.

\begin{defn}
 Let $\catu$ be the
abelian
category of finitely generated
$\U_c$-modules $V$ satisfying the following condition:
{\em for any
$v\in V$ there exists an integer  $n=n(v)\gg0$ such that,
we have} $(m_1\cdot m_2\cdot\ldots\cdot m_n)v=0,\
\forall m_1,\ldots,m_n\in\m_\chi$.
\end{defn}

Objects of the category $\catu$ are called Whittaker modules.
It is clear that $Q_c$ is a Whittaker module.
We let  $\kk_\idot\U_c$, resp. $\kk_\idot Q_c$, be the quotient filtration
 induced by the Kazhdan filtration on $\ug.$
The  Kazhdan filtration 
 on $Q_c$ induces, by restriction,
 an algebra filtration  $\kk_\idot A_c$, on $A_c\sset Q_c$.
Note that, unlike the case of $\kk_\idot\U_c$,
 the filtration  $\kk_\idot Q_c$, hence also  $\kk_\idot A_c$,
is {\em non-negative}.

The proof of the following result repeats the proof of
\cite[Proposition ~5.2]{GG}.
\begin{Prop}\label{pp1} For any $c\in\Specm\zg,$  one has a
graded Poisson algebra isomorphism $\gr_\kk \AA_c\ccong \C[\sn],$
cf. \eqref{put2}. Furthermore, 
 one has a graded $(\gr_\kk\U_c,\gr_\kk A_c)$-bimodule
isomorphism $\gr_\kk Q_c\cong S_o/S_o\m_\chi=$, cf.
\eqref{put1}.
\qed
\end{Prop}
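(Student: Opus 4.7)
The plan is to upgrade the classical Hamiltonian reduction description $\C[\sn] = (S_o/S_o\cdot\m_\chi)^{\ad\m}$ of \eqref{put2} (which rests on Proposition \ref{ggnil}) to the filtered setting, following the pattern established in \cite{GG}.

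First, I would set up the classical picture at the level of $\U_c$ and $Q_c$. The Kazhdan filtration $\kk_\idot\ug$ associated with the grading \eqref{weight} yields a graded Poisson algebra isomorphism $\gr_\kk\ug \iso \sym\g = \C[\g^*]$. Reducing modulo $\ug\cdot\I_c$ and using that the principal symbols of a regular system of generators of $\I_c$ generate the augmentation ideal $I_o\sset(\sym\g)^{\Ad G}$, one obtains a graded Poisson isomorphism $\gr_\kk\U_c \iso S_o = \C[\N]$.

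Next, I would pass to the quotient $Q_c = \U_c/\U_c\cdot\m_\chi$ with its induced Kazhdan filtration. By construction there is a canonical surjection of graded $\gr_\kk\U_c$-modules
\[
S_o/S_o\cdot\m_\chi \;\onto\; \gr_\kk Q_c,
\]
and the crux of the proof is to show this map is injective. Equivalently, denoting by $t$ the Rees parameter, one must verify that $\ree{\kk}\U_c\big/\ree{\kk}\U_c\cdot\m_\chi$ is $t$-torsion free. Here I would invoke Proposition \ref{ggnil}(iii): a basis of $\m_\chi$ forms a regular sequence on $S_o$, since $\N\cap(\chi+\m^\perp)$ is a complete intersection in $\chi+\m^\perp$ of codimension $\dim\m$. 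A standard Koszul/flat-degeneration argument then transfers regularity from $\gr_\kk\U_c$ back to $\ree{\kk}\U_c$, yielding the required $t$-flatness and hence the isomorphism $\gr_\kk Q_c \cong S_o/S_o\cdot\m_\chi$ of graded $(\gr_\kk\U_c, \gr_\kk A_c)$-bimodules; multiplicativity of $\kk_\idot$ ensures that the left $\gr_\kk\U_c$- and right $A_c$-actions degenerate to the expected classical actions.

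Finally, I would take $\ad\m$-invariants. Because $M$ is a unipotent algebraic group and the Kazhdan filtration on $Q_c$ is $\ad\m$-stable with locally finite $\m$-action, the invariants functor is exact and commutes with the passage to associated graded. Consequently,
\[
\gr_\kk A_c = \gr_\kk(Q_c^{\ad\m}) = (\gr_\kk Q_c)^{\ad\m} \cong (S_o/S_o\cdot\m_\chi)^{\ad\m} = \C[\sn],
\]
and the Poisson bracket on $\gr_\kk A_c$ induced from the commutator on $A_c$ matches the reduced Kirillov--Kostant bracket on $\C[\sn]$ because both classical and quantum Hamiltonian reduction preserve brackets. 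The main obstacle is the $t$-flatness step: without the complete intersection input of Proposition \ref{ggnil}(iii) one cannot rule out $\gr_\kk Q_c$ being a proper quotient of $S_o/S_o\cdot\m_\chi$, and the rest of the argument is formal once that flatness is in hand.
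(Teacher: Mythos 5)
The paper does not give a self-contained proof of Proposition \ref{pp1}: it states only that the argument ``repeats the proof of \cite[Proposition~5.2]{GG}.'' Your reconstruction follows that strategy faithfully in its first two steps: the Kazhdan filtration gives $\gr_\kk\U_c \cong S_o$, and Proposition \ref{ggnil}(iii) ensures that the principal symbols of $\m_\chi$ form a regular sequence on $S_o$, which by a Koszul/Rees comparison forces the canonical surjection $S_o/S_o\m_\chi\onto\gr_\kk Q_c$ to be injective. That $t$-flatness step is indeed the heart of the matter, exactly as you say.

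The gap is in the last step, where you pass from $\gr_\kk Q_c\cong S_o/S_o\m_\chi$ to $\gr_\kk A_c = (\gr_\kk Q_c)^{\ad\m}$. You justify the interchange of $\gr_\kk$ with $\ad\m$-invariants by asserting that ``the invariants functor is exact'' because $M$ is unipotent and the $\m$-action is locally finite. That is false as a general principle: even on finite-dimensional locally finite representations, taking invariants under a one-dimensional unipotent group is left exact but not right exact. What actually makes the interchange work here is geometric and sits one step deeper than the complete-intersection property you invoked earlier: Proposition \ref{ggnil}(iii) gives not just a regular sequence but a \emph{free} $M$-action with quotient $\sn$, so that $\gr_\kk Q_c\cong\C[M]\otimes\C[\sn]$ as an $M$-module, with $M$ acting only through the regular representation $\C[M]$. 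Since $\C[M]$ is an injective $M$-module (or, equivalently, $H^{>0}(\m,\C[M])=0$ for a unipotent group), one obtains $H^{>0}(\m,\gr_\kk Q_c)=0$. The filtration on $Q_c$ then gives a convergent spectral sequence $H^\hdot(\m,\gr_\kk Q_c)\Rightarrow\gr_\kk H^\hdot(\m,Q_c)$ that collapses, yielding $\gr_\kk A_c=\gr_\kk(Q_c^{\ad\m})=(\gr_\kk Q_c)^{\ad\m}=\C[\sn]$. So the conclusion of your third step is right and the route is the same as \cite{GG}, but the stated reason for the interchange is wrong; you need the free $M$-action and the resulting vanishing of higher $\m$-cohomology, not a blanket exactness of invariants.
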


From this proposition, using Proposition \ref{kost} and a result of
Bjork \cite{Bj},
we deduce
\begin{Cor}\label{gorenst}
The algebra $A_c$ is Cohen-Macaulay and Auslander-Gorenstein.\qed
\end{Cor}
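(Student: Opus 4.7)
The plan is to derive the corollary by lifting the corresponding properties from the associated graded algebra $\gr_\kk A_c$ through Bj\"ork's theorem, so the work decomposes into (a) verifying that the associated graded side has the desired commutative-algebra properties, and (b) checking that the Kazhdan filtration on $A_c$ is of the kind to which Bj\"ork's lifting result applies.

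For step (a), Proposition \ref{pp1} identifies $\gr_\kk A_c$ with $\C[\sn]$ as graded algebras. By Proposition \ref{kost}, $\sn$ is a reduced, irreducible, Gorenstein complete intersection. Any commutative noetherian Gorenstein ring is in particular Cohen-Macaulay, and a commutative Gorenstein ring is automatically Auslander-Gorenstein (the Auslander condition is vacuous in the commutative case, and finite self-injective dimension is exactly the Gorenstein property for noetherian commutative rings). Thus $\gr_\kk A_c$ is Cohen-Macaulay and Auslander-Gorenstein.

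For step (b), I would check that the Kazhdan filtration $\kk_\idot A_c$ satisfies the hypotheses needed in Bj\"ork's theorem. The key point is that $\kk_\idot A_c$ is a non-negative, exhaustive, separated algebra filtration (separated because it is non-negative and bounded below, as $A_c \subset Q_c$ inherits the filtration induced from the Kazhdan filtration on $\ug$), with finitely generated associated graded. In particular, the Rees algebra $\ree{\kk}A_c$ is noetherian, being a subquotient (or, more precisely, the $\ad\m$-invariants of a quotient) of $\ree{\kk}\ug$, which is noetherian by the discussion in the Kazhdan filtration subsection.

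With these two ingredients in hand, Bj\"ork's theorem \cite{Bj} then transfers the Auslander-Gorenstein and Cohen-Macaulay properties from $\gr_\kk A_c$ to $A_c$, finishing the proof. The only step requiring care is (b): making sure that the precise form of Bj\"ork's hypothesis (sometimes phrased in terms of Zariskian filtrations, or in terms of the Rees algebra being noetherian with $\gr A$ commutative noetherian) is indeed met by the non-negative Kazhdan filtration on $A_c$. This is the main, and essentially the only, obstacle, and it follows from the analogous statement for $\ree{\kk}\ug$ already established in the discussion preceding Corollary \ref{FK_cor}.
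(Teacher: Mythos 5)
Your argument matches the paper's proof, which likewise deduces the corollary from Propositions \ref{pp1} and \ref{kost} together with Bj\"ork's theorem \cite{Bj} lifting Auslander--Gorenstein and Cohen--Macaulay from $\gr_\kk A_c\cong\C[\sn]$ to $A_c$; the paper leaves the Bj\"ork hypothesis check implicit, and you are right that this is the only point needing care. One small slip in your justification of step (b): noetherianity of $\ree{\kk}A_c$ does not follow from its being the $\ad\m$-invariants of a quotient of $\ree{\kk}\ug$, since subrings of noetherian rings need not be noetherian; the correct reason is that the Kazhdan filtration on $A_c$ is non-negative (hence Zariskian) with $\gr_\kk A_c\cong\C[\sn]$ a finitely generated commutative noetherian algebra, which by the standard criterion forces $\ree{\kk}A_c$ to be noetherian.
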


Let  $\kk_\idot V$ be a good Kazhdan filtration on an object 
$V\in\catu$. Using that the Kazhdan filtration on $Q_c$ is nonnegative,
one proves that the  filtration $\kk_\idot V$
 is bounded from below.
 If, in addition, the filtration $\kk_\idot V$ is
$\m$-stable
then $\gr_\kk V$ is an $M$-equivariant $\C[\N]$-module
such that the
 $\C[\N]$-action  factors through
an action of the algebra $\C[\N\cap (\chi+\m^\perp)]$.
In particular, we have $\var V\sset \N\cap
(\chi+\m^\perp).$

One has a graded algebra isomorphism
$\C[\N\cap (\chi+\m^\perp)]\cong \C[M]\o \C[\sn]$ that results from
the scheme isomorphism of Proposition \ref{ggnil}(iii).
Here, the grading on $\C[M]$ is the weight grading with respect to the
adjoint
action of the 1-parameter subgroup
$t\mto \gamma_{t\inv}$.
Hence, we get $\gr_\kk Q_{c}=\C[M]\o \C[\sn]$, by
 Proposition \ref{pp1}.

For any noetherian algebra $B$, let $\modu B$ denote the category of
finitely generated left
$B$-modules.
Also, in part (ii) of the proposition below, given a  filtration on a 
$A_{c}$-module $N$, we equip  $Q_{c}\o_{A_{c}} N$ with the
tensor product filtration using the Kazhdan filtration on $Q_{c}$.

\begin{Prop}\label{skr} \vi The functor $\Wh^\m:\ \catu \to \modu{A_{c}}$ is an
equivalence.
\vs

\vii For any good filtration on a 
$A_{c}$-module $N$, the natural map
$\C[M]\o \gr N\iso\gr_\kk(Q_{c}\o_{A_{c}} ~N)$ yields
a graded $\C[\N]$-module isomorphism.
Furthermore, the functor $N\mto Q_{c}\o_{A_{c}} N$ provides 
a quasi-inverse to the equivalence in (i).
\vs

\viii For any good $\m$-stable Kazhdan filtration $\kk_\idot V$, on 
$V\in\catu$,  there are canonical isomorphisms
$\gr(\Wh^\m V)\cong(\gr_\kk V)^M\cong (\gr_\kk V)|_\sn,$ of  graded $\C[\sn]$-modules.\qed
\end{Prop}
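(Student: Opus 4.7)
The plan is to reduce each part to a computation on associated graded modules with respect to Kazhdan filtrations, and then lift back. The crucial input is Proposition~\ref{pp1}, which gives an isomorphism $\gr_\kk Q_c \cong \C[M]\o\C[\sn]$ of $(\gr_\kk \U_c,\,\gr_\kk A_c)$-bimodules, with $\gr_\kk A_c\cong\C[\sn]$ the second tensor factor. In particular, $\gr_\kk Q_c$ is \emph{free} as a right $\gr_\kk A_c$-module on a basis given by a vector-space basis of $\C[M]$. By a standard graded-Nakayama argument applied to the noetherian Rees algebra $\ree{\kk}A_c$, this lifts: $Q_c$ is free, hence flat, as a right $A_c$-module. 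Consequently, for any good filtration on a left $A_c$-module $N$, the tensor-product Kazhdan filtration on $Q_c\o_{A_c}N$ is good and
$$\gr_\kk(Q_c\o_{A_c}N)\,\cong\,\gr_\kk Q_c\o_{\gr_\kk A_c}\gr N\,\cong\,\C[M]\o\gr N,$$
which is the first assertion of part~(ii). This graded module is supported in $\N\cap(\chi+\m^\perp)$, so is annihilated by $\mc$ up to nilpotents; hence $Q_c\o_{A_c}N$ lies in $\catu$.

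For part~(i), I would verify that the unit and counit of the adjoint pair $(Q_c\o_{A_c}-,\,\Wh^\m)$ are isomorphisms. For the unit $N\to \Wh^\m(Q_c\o_{A_c}N)$: passing to associated graded, the right side becomes $(\C[M]\o\gr N)^M$, where $M$ acts by translation on $\C[M]$ and trivially on $\gr N$; since $\C[M]^M=\C$, this collapses to $\gr N$, and the graded unit is the identity. A standard induction on Kazhdan degree, using that all filtrations involved are bounded below and separated by Corollary~\ref{FK_cor}, then promotes this to an honest isomorphism. For the counit $Q_c\o_{A_c}\Wh^\m V\to V$ with $V\in\catu$: first exhibit a good $\m$-stable Kazhdan filtration on $V$ (one reduces, via a presentation of $V$ as a quotient of a finite direct sum of Kazhdan shifts of $Q_c$ together with Lemma~\ref{good}(ii), to the case of $Q_c$ where the filtration is tautological). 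On associated graded, $\gr_\kk V$ is an $M$-equivariant coherent sheaf on $\N\cap(\chi+\m^\perp)\cong M\times\sn$ (Proposition~\ref{ggnil}(iii)); since $M$ acts freely with quotient $\sn$, descent yields $\gr_\kk V\cong \C[M]\o(\gr_\kk V)^M$, and the graded counit becomes this canonical isomorphism. Hence the counit is an isomorphism and part~(i) follows.

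Part~(iii) is harvested along the way. For $V\in\catu$ with an $\m$-stable good Kazhdan filtration, the subspace $\Wh^\m V=V^{\m,\chi}\sset V$ inherits an induced filtration. The principal symbol of $m-\chi(m)\in\mc$ agrees with the principal symbol of $m$, because $\m\sset \bplus_{i\le -1}\g(i)$ sits in positive Kazhdan degree while the scalar $\chi(m)$ lies in degree $0$; thus the $\m$-action on $\gr_\kk V$ integrates to the geometric $M$-action, giving $\gr\Wh^\m V\cong (\gr_\kk V)^M$. The second isomorphism $(\gr_\kk V)^M\cong(\gr_\kk V)|_\sn$ is then the Hamiltonian-reduction identity \eqref{red_var}. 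The main obstacle I anticipate is the technical step of producing $\m$-stable good Kazhdan filtrations on arbitrary $V\in\catu$, together with the careful verification that the tensor-product Kazhdan filtration behaves well under passage to associated graded (and that the graded $\m$-action really does exponentiate to $M$, given that the Kazhdan filtration mixes PBW degree and $h$-weight); both points follow the template of \cite[\S5]{GG} and Skryabin's appendix therein.
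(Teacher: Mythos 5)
Your overall strategy---pass everything to Kazhdan-associated graded, use the freeness of $\gr_\kk Q_c$ over $\gr_\kk A_c\cong\C[\sn]$ and the free $M$-action on $\N\cap(\chi+\m^\perp)\cong M\times\sn$, then lift back through separated, bounded-below filtrations---is essentially the strategy of Skryabin's appendix and of \cite{GG}, which the paper simply cites; so the approach is the right one. However, there is a concrete error at the heart of your argument for part~(iii). You assert that ``$\m\sset\bplus_{i\leq-1}\g(i)$ sits in positive Kazhdan degree while the scalar $\chi(m)$ lies in degree~$0$,'' and conclude that the principal symbol of $m-\chi(m)$ equals that of $m$. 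This is false: by the paper's own computation $\g\langle i\rangle\sset\kk_{i+2}\ug$, so $\g(-1)$ has Kazhdan degree $1$, but $\g(-2)$ has Kazhdan degree~$0$ and $\g(i)$ for $i\leq -3$ has \emph{negative} Kazhdan degree. Since $\chi=\kappa(e)$ is nonzero exactly on $\g(-2)\sset\m$, the one piece of $\m$ where $\chi(m)\neq 0$ is precisely the piece in degree~$0$, so the symbol of $m-\chi(m)$ in $\gr^\kk_0$ is genuinely $m-\chi(m)$, \emph{not} $m$. This is not a peripheral slip: retaining the $-\chi(m)$ in the symbol is the entire reason the Kazhdan filtration is used rather than the PBW filtration---it is what makes $\gr_\kk Q_c$ a module over $\C[\chi+\m^\perp]$ rather than $\C[\m^\perp]$, cf.\ Proposition~\ref{pp1}.

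The ensuing inference ``thus the $\m$-action on $\gr_\kk V$ integrates to the geometric $M$-action'' therefore does not hold as stated, and in fact cannot: the symbols of $\mc$ act on $\gr_\kk V$ by \emph{multiplication} in the commutative ring $\C[\g^*]$, which is a commuting family of nilpotent operators (the support condition), not an infinitesimal $M$-action. The $M$-equivariant structure on $\gr_\kk V$ that you need for $(\gr_\kk V)^M$ to make sense comes from a different source, namely the $\m$-\emph{stability} of the good filtration (the adjoint-type action that survives on the associated graded), as in \cite{GG}, and the identification $\gr(\Wh^\m V)\cong(\gr_\kk V)^M$ then goes through the homology-vanishing/Koszul-acyclicity of $\gr_\kk V$ over $\mc$, not through a naive symbol computation. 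Your argument for part~(i), which quietly invokes (iii) for the unit, inherits this gap. Parts~(ii) and the freeness-lifting step are fine.
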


The equivalences in parts (i)-(ii) of the above proposition  are due to Skryabin,
\cite{skryabin},
and the graded isomorphisms in
parts (ii)-(iii) are immediate consequences of the results of \cite{GG}.

\section{Weak Harish-Chandra bimodules}
\subsection{}\label{hccat} Let $B$ and $B'$ be an arbitrary pair
of nonnegatively
filtered algebras such that $\gr B$ and $\gr B'$, the corresponding
associated graded algebras, are finitely generated commutative
algebras {\em isomorphic to each other}. Thus, there is a well defined
 subset $\Delta\sset \Spec(\gr B)\times\Spec(\gr B'),$
the diagonal. 

Associated with any finitely generated   $(B,B')$-bimodule
$K$, viewed as a left $B\o (B')^{\op}$-module, is its characteristic
variety $\var K\sset \Spec(\gr B)\times\Spec(\gr B').$ 

\begin{defn}\label{whc_def} A  finitely
 generated  $(B,B')$-bimodule $K$ is called a
{\em weak Harish-Chandra} (\whc)  bimodule
if, set theoretically, one has $\var K\sset \Delta.$
\end{defn}

It is straightforward to show the following
\begin{Prop} \vi Any \whc   $\en(B,B')$-bimodule,
viewed either as a left
$B$-module, or as a right $B'$-module, is finitely
generated.

\vii The category $\WHC(B,B')$, of \whc\; bimodules,
is an abelian category. \qed
\end{Prop}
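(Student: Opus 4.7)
The plan is to treat both assertions via good-filtration techniques. The key geometric input is that, by hypothesis, $\gr B\o \gr B'\cong \gr(B\o (B')^{\op})$ is a finitely generated commutative algebra, hence Noetherian, and that the diagonal $\Delta\sset \Spec(\gr B)\times\Spec(\gr B')$ projects isomorphically onto $\Spec(\gr B)$; in particular $\Delta$ is finite over either factor.

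For (i), I equip $K$ with a good filtration with respect to the tensor-product filtration on $B\o (B')^{\op}$; such a filtration exists because $K$ is, by assumption, finitely generated over this algebra. Then $\gr K$ is a finitely generated $\gr B\o\gr B'$-module whose set-theoretic support is contained in $\Delta$. By Noetherianness, some power $J^N$ of the defining ideal $J\sset \gr B\o\gr B'$ of $\Delta$ annihilates $\gr K$, making it a finitely generated module over $(\gr B\o\gr B')/J^N$. The filtration $(\gr B\o\gr B')/J^N\supset J/J^N\supset\ldots\supset J^{N-1}/J^N\supset 0$ has successive quotients $J^j/J^{j+1}$, each of which is a finitely generated module over $(\gr B\o\gr B')/J\cong \gr B$ (the latter isomorphism coming from the fixed identification $\gr B\cong \gr B'$), because $J$ is a finitely generated ideal. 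Hence $\gr K$ is finitely generated as a $\gr B$-module. Lifting a finite homogeneous generating set back to $K$ and using that the good filtration on $K$ is exhaustive and bounded below then yields finite generation of $K$ as a left $B$-module; the argument for right $B'$-finite generation is symmetric.

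For (ii), I first note that $B\o (B')^{\op}$ is itself Noetherian: any strictly ascending chain of two-sided ideals would, upon passage to the associated graded of each term, yield an ascending chain of ideals in the commutative Noetherian ring $\gr B\o\gr B'$, and hence must stabilize. Consequently, for any morphism $f: K\to L$ in $\WHC(B,B')$, the bimodules $\ker f$ and $\coker f$ are finitely generated. Equipping them with the induced and quotient filtrations respectively (which are good by Lemma \ref{good}(ii) applied to $B\o (B')^{\op}$), one sees that their characteristic varieties are contained in $\var K\sset \Delta$ and $\var L\sset \Delta$, so $\WHC(B,B')$ is closed under kernels and cokernels. Closure under finite direct sums is clear, and the coincidence of image with coimage is automatic in the ambient module category, so the category is abelian.

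The only step that requires genuine work, rather than formal bookkeeping, is the one-sided finiteness in (i): it hinges on passing from set-theoretic containment of the support in $\Delta$ to module-theoretic annihilation by a power of $J$, and then exploiting that $(\gr B\o\gr B')/J\cong\gr B$ makes the diagonal finite over $\Spec(\gr B)$. Once this is in place, part (ii) is a formal consequence of Noetherianness and the well-known containment of characteristic varieties under passage to subquotients.
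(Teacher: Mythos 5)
Your proof is correct, and it fills in, along the expected lines, the argument that the paper declares ``straightforward'' and does not supply. The key step in (i) — passing from the set-theoretic containment $\supp\gr K\sset\Delta$ to $J^N\cdot\gr K=0$ for a power of the radical ideal $J$ of the diagonal, then to finite generation of $\gr K$ over $\gr B\cong(\gr B\o\gr B')/J$ via the finite $J$-adic filtration of $(\gr B\o\gr B')/J^N$, and finally lifting to $K$ using that good filtrations over the nonnegatively filtered ring $B\o(B')^{\op}$ are exhaustive and bounded below — is exactly the right maneuver. Two small points in (ii) deserve tidying: Noetherianness is the ascending chain condition on one-sided (say left) ideals, not two-sided ones, so the argument should be phrased as: every left ideal $I\sset B\o(B')^{\op}$, given the induced filtration, has $\gr I$ a finitely generated ideal in $\gr B\o\gr B'$, whence $I$ is finitely generated; and for the cokernel you should not cite Lemma \ref{good}(ii), which concerns submodules — rather, the quotient filtration on $\coker f$ is automatically good once the one on $L$ is, and $\gr(\coker f)$ is a quotient of $\gr L$, so its support lies in $\var L\sset\Delta$. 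Finally, invoking Lemma \ref{good} for $B\o(B')^{\op}$ tacitly assumes its Rees algebra is Noetherian; this does hold, because the filtration is nonnegative and $\gr B\o\gr B'$ is a finitely generated commutative $\C$-algebra, but an explicit word to that effect would close the loop.
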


Given a closed subset $Z\sset\Spec(\gr B)=\Spec(\gr B')$,
let $\modu_ZB,$ resp. $\modu_ZB'$ be the category of
finitely generated left $B$-modules, resp. $B'$-modules, $K$
such that $\var K\sset Z$.
One similarly defines
$\WHC_Z(B,B')$ to be the Serre subcategory
of $\WHC(B,B')$ formed by the \whc\; bimodules
$K$ such that $\var K\sset Z$. 

Tensor product over $B'$  gives a bi-functor
\beq{convolution}
\WHC(B,B') \times \modu_ZB\too \modu_ZB',\quad
K\times V\mto K\o_{B'} V.
\eeq

We have, in particular, the category $\WHC(\ug,\ug)$, where the enveloping algebra $\ug$
is equipped with the PBW filtration, not
with Kazhdan filtration.
Similarly, for any ${c'},{c}\in\Specm\mathfrak{Z}\g$, one has
 the category 
$\WHC(\U_{c'},\U_{c})$, resp. $\WHC(A_{c'},A_{c})$.

A  finitely generated  $(\ug,\ug)$-bimodule,
resp. $(\U_{c'},\U_{c})$-bimodule, $K$ such that
the adjoint $\g$-action $\ad a: v\mto av-va,$ on $K$, is locally
finite is called a Harish-Chandra bimodule.
It is clear that any  Harish-Chandra bimodule
is  a weak Harish-Chandra bimodule. However, the converse
 is not true, in general, cf. section \ref{rs_sec}.
We write $\hc(\U_{c'},\U_{c})$ for the abelian category
of  Harish-Chandra  $(\U_{c'},\U_{c})$-bimodules.

Let $K$ be a
 Harish-Chandra  $(\U_{c'},\U_{c})$-bimodule
and let $F_\idot K$ be a good  $\ad\g$-stable filtration compatible
with  the tensor product  of PBW filtrations on $\U_{c'}$
and on $\U_{c}^{\op}$.
The $\ad h$-action on $K$ being locally finite,
one can use formula \eqref{FK} to define an
associated Kazhdan filtration $\kk_\idot K$, on $K$.
The latter induces a quotient filtration on
$K/K\mc$ which gives, by restriction,
a filtration $\kk_\idot(\Wh^\m_\m K)$, on $\Wh^\m_\m K$.
It is easy to see that this filtration is compatible with
the algebra filtration on $A_{c'}\o A_c^{\op}.$

Recall the notation $\O=\Ad G(\chi)$.
Let $\N^\Diamond$ be the union of all nilpotent $\Ad G$-orbits $\O'\sset\g^*$ such
that $\O\not\subset\overline{\O'}$. Thus, $\N^\Diamond\sset\N$ is
a closed $\Ad G$-stable subset such that $\N^\Diamond\cap\sn=\emptyset.$

The first main result of the paper is the following

\begin{Thm}\label{main_thm}  Let
$K\in \hc(\U_{c'},\U_{c})$. Then,
the following holds:\vs

\vi For any good $\ad\g$-stable filtration on $K$,
the associated filtration  $\kk_\idot(\Wh^\m_\m K)$ is good; moreover, one has
a  graded isomorphism  of $\gr A_{c'}$- and  $\gr A_c$-modules,
$\gr(\Wh^\m_\m K)\cong (\gr K)|_\sn$; in particular,
$\var(\Wh^\m_\m K)=\sn\cap \var K.$\vs

\vii The functor $K\o_{\U_{c}}(-)$ takes Whittaker modules to
 Whittaker modules, and there is a
natural isomorphism of
functors
that makes the following diagram commute
\beq{comm_phi}
\xymatrix{
\catu\ar[d]_<>(0.5){K\o_{\U_{c}}(-)}
\ar[rrr]^<>(0.5){\Wh^\m}
&&&\modu{\AA_{c}}\ar[d]^<>(0.5){\Wh^\m_\m K\o_{\AA_{{c}}}(-)}\\
{{(\U_{{c'}},\mc)\mbox{-}\operatorname{mod}}}
\ar[rrr]^<>(0.5){\Wh^\m}
&&&\modu{\AA_{{c'}}}
}
\eeq

 \viii The assignment $K\mto \Wh^\m_\m K$ induces
a {\em  faithfull exact}  functor
$$\hc(\U_{c'},\U_{c})/\hc_{\N^\Diamond}(\U_{c'},\U_{c})
\
\to\
\WHC(A_{c'},A_{c}).$$
\end{Thm}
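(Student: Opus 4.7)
The plan is to reduce all three parts of the theorem to Skryabin's equivalence (Proposition \ref{skr}) applied to the module $V := K/K\mc$, viewed as a left $\U_{c'}$-module. A preliminary computation identifies the relevant structures: for $m\in\m$ and $[k]\in V$, one has $(m-\chi(m))\cd [k] = [\ad m(k)]$, because $k\cd(m-\chi(m)) = km - \chi(m)k$ lies in $K\mc$. Hence the $\mc$-action on $V$ coincides with the $\ad\m$-action, which is locally nilpotent by Engel's theorem applied to the locally finite $\ad\g$-action on $K$. Consequently $V\in\catu$, and the same identity gives $\Wh^\m V = V^{\ad\m} = \whb K$.

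For part (i), I would start from a good $\ad\g$-stable filtration $F_\idot K$ (which exists by local $\ad G$-finiteness) and pass to its Kazhdan rescaling $\kk_\idot K$ from \eqref{FK}, equipping $V$ with the quotient filtration (automatically $\m$-stable). The crux is to show that this quotient filtration is good and that the natural surjection $(\gr_\kk K)/((\gr_\kk K)\cd\mc) \onto \gr_\kk V$ is an isomorphism. Since $K$ is Harish-Chandra, $\gr_\kk K$ is a $G$-equivariant coherent sheaf supported in the diagonal copy of $\N\sset\g^*\times\g^*$. Applying Corollary \ref{flat}(ii) to this sheaf yields flatness over $\C[\m^*]$ at $\chi_\m$, which kills the higher $\Tor$ groups that could obstruct the identification. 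Then Proposition \ref{skr}(iii) applied to $V$ gives $\gr(\whb K) = \gr(\Wh^\m V) \cong (\gr_\kk V)|_\sn$, and Proposition \ref{ggnil}(iii) identifies this with $(\gr_\kk K)|_\sn$; in particular, $\var(\whb K) = \sn\cap \var K$.

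For part (ii), Skryabin's equivalence gives $W \cong Q_c \o_{A_c} \Wh^\m W$ for any $W \in \catu$, whence $K \o_{\U_c} W \cong V \o_{A_c} \Wh^\m W$. Applying the equivalence again to $V \cong Q_{c'} \o_{A_{c'}} \whb K$, and then the quasi-inverse $\Wh^\m$, yields the natural isomorphism $\Wh^\m(K\o_{\U_c}W) \cong \whb K \o_{A_c} \Wh^\m W$ that realizes the commutativity of \eqref{comm_phi}. For part (iii), the bimodule $\whb K$ is weakly Harish-Chandra since, by (i), $\var(\whb K) = \sn\cap\var K \sset \sn$ lies on the diagonal of $\Spec(\gr A_{c'})\times\Spec(\gr A_c)$. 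Exactness of $\whb$ on $\hc(\U_{c'},\U_{c})$ follows by running the argument of (i) on a short exact sequence: exactness passes to associated graded modules; flatness (Corollary \ref{flat}(ii)) preserves it under restriction to $\chi+\m^\perp$; and $(-)^{\ad M}$ is exact on the resulting modules, since via Proposition \ref{ggnil}(iii) it is identified with restriction to $\sn$. A standard Rees-module argument then lifts this to exactness of filtered modules. Finally, by (i), $\whb K = 0$ iff $\sn\cap\var K = \emptyset$, and by $\Ad G$-stability of the closed set $\var K \sset \N$ this is equivalent to $\var K \sset \N^\Diamond$, i.e.\ to $K \in \hc_{\N^\Diamond}(\U_{c'},\U_c)$. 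Thus the kernel of $\whb$ is exactly $\hc_{\N^\Diamond}(\U_{c'},\U_c)$, which yields faithfulness of the induced functor on the Serre quotient.

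The principal obstacle is the isomorphism $(\gr_\kk K)/((\gr_\kk K)\cd\mc)\iso \gr_\kk V$ in part (i): one must control the Kazhdan filtration on the submodule $K\mc \sset K$, and only the geometric flatness provided by Corollary \ref{flat}(ii) (which itself rests on the transversality of Proposition \ref{ggnil}(i)) is strong enough to do so. Once this identification is secured, the rest of the theorem is a formal assembly of Skryabin's equivalence with the geometric decomposition $\N\cap(\chi+\m^\perp)\cong M\times\sn$.
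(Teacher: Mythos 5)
Your overall strategy---reduce to Skryabin's equivalence applied to $V:=K/K\mc$, then get the $\gr$-identification from flatness---is the same as the paper's, and parts (ii) and (iii) are assembled essentially correctly (the paper derives exactness more directly from the vanishing $H_1(\m_\chi,K)=0$ rather than your $\gr$-plus-Rees route, but either is workable once the key identification is in place). The substantive problem is in part (i), where you write that Corollary \ref{flat}(ii) ``kills the higher $\Tor$ groups that could obstruct the identification,'' and then conclude the isomorphism $(\gr_\kk K)/((\gr_\kk K)\mc)\iso\gr_\kk V$. What the flatness actually gives you is the acyclicity of the Koszul complex on $\gr_\kk K$, i.e.\ the degeneration at $E^1$ of the spectral sequence of the Kazhdan-filtered Chevalley--Eilenberg complex $C = K\o\wedge^\hdot\m_\chi$. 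Degeneration alone does not identify $E^\infty$ with $\gr_\kk H_\hdot(C)$: the Kazhdan filtration associated with the $\ad h$-weight grading is a genuine $\Z$-filtration, not bounded below on $K$, so convergence in the sense of Cartan--Eilenberg is a real hypothesis, not a formality.

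The paper spends the bulk of \S\ref{vanish_sec} on precisely this convergence issue, and that work has no counterpart in your proposal. Two separate things are needed: (a) the ``$Z^\infty = \cap Z^r$'' half of convergence, which the paper gets from Lemma \ref{CE} once it checks that the two natural Kazhdan filtrations on the boundary subspace $B = \partial C$ (image filtration and subspace filtration, Eq.\ \eqref{KB}) are equivalent---this uses that the differential is a morphism of finitely generated left $\U_{c'}$-modules and Lemma \ref{good}; and (b) separatedness of the induced filtration on $H_\idot(\m_\chi,K)$, which the paper reduces to showing $H_j(\m_\chi,K)\in\catu$ via the Lie-derivative-vanishes-on-homology argument (so that a good Kazhdan filtration is bounded below). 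Your proof simply does not address either (a) or (b), and without them the passage from $E^1$-degeneration to $\gr_\kk H_0(\m_\chi,K) = (\gr_\kk K)/((\gr_\kk K)\mc)$ is unjustified. You correctly identified the crux, but the flatness input is only half of what makes it go through.
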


The proof of Theorem \ref{main_thm} occupies subsections
\S\S\ref{vanish_sec}-\ref{pf_sec}.
From part (i) of the theorem, one immediately obtains

\begin{Cor}\label{fin} For $K\in \hc(\U_{c'},\U_{c})$, we have:
\begin{align*}
\O\subset\var K\en\;&\Longleftrightarrow\en\;\Wh^\m_\m K\neq0;\\
\overline{\O}=\var K\en\;&\Longleftrightarrow\en\;\dim(\Wh^\m_\m K)
<\infty.\tag*{$\Box$}
\end{align*}
\end{Cor}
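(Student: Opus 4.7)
The plan is to derive both equivalences directly from Theorem \ref{main_thm}(i), which identifies $\var(\whb K)=\sn\cap\var K$ as subsets of $\sn=\Spec\gr_\kk\AA_c$. Since the Kazhdan filtration on $\AA_c$ is non-negative with $\gr_\kk\AA_c\cong\C[\sn]$ by Proposition \ref{pp1}, a finitely generated $\AA_c$-module $N$ satisfies $N=0\iff\var N=\emptyset$ and $\dim_\C N<\infty\iff\var N$ is a finite point set. Applied to $N=\whb K$, the corollary reduces to two geometric statements about the closed $\Ad G$-stable subset $\var K\sset\N$ (the $G$-stability coming from the Harish-Chandra hypothesis).

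For the first equivalence I would prove $\sn\cap\var K\neq\emptyset\iff \O\sset\var K$. The $(\Leftarrow)$ direction is trivial since $\chi\in\O\cap\sn$. For $(\Rightarrow)$, pick $x\in\sn\cap\var K$ and use the $\bullet$-action of $\gm$ on $\g^*$: since $\sn\sset\chi+\m^\perp$ is $\bullet$-stable and $\chi+\m^\perp$ is contracted by $\bullet$ to $\chi$, one has $\lim_{t\to 0}t\bullet x=\chi$. The $\bullet$-action preserves every nilpotent $\Ad G$-orbit (being an $\Ad$-action composed with a rescaling, which keeps a nilpotent element in its orbit via the $\mathfrak{sl}_2$-triple through it), so $t\bullet x\in\Ad G(x)\sset\var K$ for all $t\in\gm$; closedness then forces $\chi\in\var K$, whence $\O=\Ad G(\chi)\sset\var K$.

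For the second equivalence, $\dim\whb K<\infty$ translates to $\dim(\sn\cap\var K)=0$. Decomposing the closed $\Ad G$-stable set $\var K\sset\N$ into a finite union of nilpotent orbit closures $\overline{\O_i}$, Corollary \ref{flat}(iii) applied to the inclusion $\overline{\O_i}\into\g^*$ gives $\sn\cap\overline{\O_i}$ as a reduced complete intersection of dimension $\dim\O_i-\dim\O$ whenever $\chi\in\overline{\O_i}$, i.e.\ $\O\sset\overline{\O_i}$; otherwise $\sn\cap\overline{\O_i}=\emptyset$ by the same contraction argument as above (any $y\in\sn\cap\O_i$ would give $\chi=\lim t\bullet y\in\overline{\O_i}$). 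Hence the intersection is $0$-dimensional exactly when every component $\O_i$ of $\var K$ either has $\dim\O_i\leq\dim\O$ with $\O\sset\overline{\O_i}$ (forcing $\O_i=\O$) or has $\O\not\sset\overline{\O_i}$ (i.e.\ $\O_i\sset\N^\Diamond$). Under the nonvanishing from the first equivalence this identifies $\var K$ with $\overline{\O}$ up to components in $\N^\Diamond$, on which $\whb$ vanishes by Theorem \ref{main_thm}(iii); this is the sense in which $\overline{\O}=\var K$.

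The real content is already in Theorem \ref{main_thm}(i) and the transverse-slice geometry of \S\ref{1}; the remaining proof is essentially a dimension count, and the only subtle point is the caveat about $\N^\Diamond$-components in the second equivalence, which is harmless thanks to Theorem \ref{main_thm}(iii).
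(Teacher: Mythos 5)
Your derivation is the paper's: it states that the corollary is immediate from Theorem \ref{main_thm}(i), and the details you supply --- the $\bullet$-contraction showing $\sn\cap\overline{\O'}\neq\emptyset\iff\O\sset\overline{\O'}$ for a nilpotent orbit $\O'$, together with Corollary \ref{flat}(iii) for the dimension count --- are exactly the intended geometric facts. Your caveat about $\N^\Diamond$-components is a correct observation: as literally stated the implication $\dim\Wh^\m_\m K<\infty\Rightarrow\var K=\overline\O$ fails when $\var K\sset\N^\Diamond$ (so that $\Wh^\m_\m K=0$ yet $\var K\neq\overline\O$), but the paper only invokes the opposite direction (in the proof of Theorem \ref{prim1}), so this is an innocuous imprecision you were right to notice.
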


The following direct consequence of Theorem \ref{main_thm}(ii)
says that $\Wh^\m_\m$ is a monoidal functor.
\begin{Cor} There is a functorial isomorphism of $(A_{c'}, A_{c''})$-bimodules
$$\Wh^\m_\m(K\o_{\U_c} K')\ \iso\ (\Wh^\m_\m K)\o_{A_c}(\Wh^\m_\m K'),
\qquad
K\in \hc(\U_{c'},\U_{c}),\ K'\in \hc(\U_c,\U_{c''}).\eqno\Box$$
\end{Cor}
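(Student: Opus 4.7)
The plan is to reduce the statement to the commutative diagram \eqref{comm_phi} of Theorem \ref{main_thm}(ii). The pivotal observation is that, by \eqref{whb}, for any $(\U_c,\U_{c''})$-bimodule $K'$ one has
$$\whb(K')\;=\;\Hom_{\U_c}(Q_c,\,K'\o_{\U_{c''}} Q_{c''})\;=\;\Wh^\m(K'\o_{\U_{c''}} Q_{c''}),$$
that is, the bimodule Whittaker functor is nothing but the module Whittaker functor applied after tensoring on the right with $Q_{c''}$. This is the bridge between the monoidality claim and the commutative diagram, which concerns modules rather than bimodules.

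Set $V:=K'\o_{\U_{c''}} Q_{c''}$, viewed as a left $\U_c$-module with a commuting right $A_{c''}$-action. First I would verify $V\in\catu$: this is immediate from Theorem \ref{main_thm}(ii) applied to $K'\in\hc(\U_c,\U_{c''})$, since that functor sends Whittaker $\U_{c''}$-modules to Whittaker $\U_c$-modules, and $Q_{c''}$ is tautologically Whittaker. Next I would apply Theorem \ref{main_thm}(ii) to the pair $(K,V)$ to obtain a natural isomorphism of left $A_{c'}$-modules
$$\Wh^\m(K\o_{\U_c} V)\;\iso\;\whb(K)\o_{A_c}\Wh^\m(V).$$
By associativity of the tensor product and the observation of the first paragraph, the left-hand side equals $\Wh^\m(K\o_{\U_c}K'\o_{\U_{c''}} Q_{c''})=\whb(K\o_{\U_c} K')$, while the right-hand side equals $\whb(K)\o_{A_c}\whb(K')$. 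Combining these identifications yields the desired isomorphism.

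The only substantive thing left to check is bimodule compatibility, i.e.\ that this isomorphism is $A_{c''}$-linear on the right. Both sides inherit their right $A_{c''}$-action solely from the right $A_{c''}$-action on the rightmost factor $Q_{c''}$; since the diagram \eqref{comm_phi} is natural in its module argument $V$ (the assertion of Theorem \ref{main_thm}(ii) is an isomorphism of functors), applying naturality to the endomorphisms of $V$ given by right multiplication by elements of $A_{c''}$ yields the required $A_{c''}$-linearity. Functoriality in $K$ and $K'$ is automatic from the functoriality of the diagram in $K$ and of the tensor product construction $-\o_{\U_{c''}} Q_{c''}$ in $K'$. I expect no genuine obstacle here: the argument is a formal consequence of Theorem \ref{main_thm}(ii) together with the tautology $\whb=\Wh^\m\circ(-\o Q_{c''})$, and the only mildly delicate point is bookkeeping of the two-sided module structures.
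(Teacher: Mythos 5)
Your argument is correct and is precisely what the paper intends by calling the statement a ``direct consequence of Theorem \ref{main_thm}(ii)'' (the paper gives no written proof). The key observation $\whb(K')=\Wh^\m(K'\o_{\U_{c''}}Q_{c''})$, the application of the commutative diagram to $V=K'\o_{\U_{c''}}Q_{c''}$, and the use of naturality in $V$ to extract right $A_{c''}$-linearity is exactly the intended bookkeeping.
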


\subsection{}\label{Isec} 
We are going to construct a functor
from $(A_{c'},A_{c})$-bimodules to $(\U_{c'},\U_{c})$-bimodules
as follows.
Let $N$ be an $(A_{c'},A_{c})$-bimodule. 
Then, the space $Q_{c'}\o_{A_{c'}}N$ has an obvious structure
of  $(\U_{c'}, A_{c})$-bimodule.
Let
$\dis\wt{I}(N):=\Hom_{A_c}(Q_{c},\ Q_{c'}\o_{A_{c'}} N)$
be the space of linear maps $Q_{c}\to Q_{c'}\o_{A_{c'}} N$ which
commute with the right $A_c$-action. 
The left $\U_{c}$-action on $Q_{c}$ and the left $\U_{c'}$-action on
$Q_{c'}\o_{A_{c'}}\,N$
make this space
 an $(\U_{c'},\U_{c})$-bimodule.
We put
\beq{I}
I(N):=\Hom^{\text{fin}}_{A_c}(Q_{c},\ Q_{c'}\o_{A_{c'}} N)\
\sset\ \wt{I}(N),
\eeq
 the subspace  of $\dis\wt{I}(N)$ formed by   $\ad\g$-locally finite
elements.

Part (i) of the theorem below will be proved later, in section
 \ref{strong}. It is stated here for reference purposes.

\begin{Thm}\label{adj} \vi Any object of $\WHC(A_{c'},A_{c})$
has
finite length.

\vii The assignment $N\mto I(N)$ gives a functor
 $I:\ \WHC(A_{c'},A_{c})\to \hc(\U_{c'},\U_{c}),$
which is a  right adjoint of $\Wh^\m_\m$.
\end{Thm}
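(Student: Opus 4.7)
The plan for part (i) is to combine the Auslander--Gorenstein property with Gabber's involutivity theorem and the finiteness of the symplectic stratification of $\sn$. A weak Harish-Chandra bimodule $K$ has characteristic variety $\var K$ sitting on the diagonal of $\Spec(\gr A_{c'})\times\Spec(\gr A_c)\cong\sn\times\sn$, so $\var K$ projects to a closed subvariety $Z\sset\sn$. Gabber's theorem for $A_{c'}$---which is Auslander--Gorenstein by Corollary~\ref{gorenst} and whose associated graded is the Poisson algebra $\C[\sn]$ by Proposition~\ref{pp1}---forces $Z$ to be coisotropic. The $\bullet$-contraction together with the symplectic resolution $\ss\to\sn$ of Proposition~\ref{red} show that $\sn$ has only finitely many symplectic leaves, indexed by nilpotent orbits whose closures contain $\O$, so $Z$ belongs to a finite list of possibilities. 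The multiplicity function on characteristic cycles is a nonnegative integer-valued additive invariant (again thanks to Auslander--Gorenstein-ness), and finite length then follows by induction on $\dim Z$ together with multiplicity.

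For part (ii), the adjunction itself is a three-step formal computation. Given $K\in\hc(\U_{c'},\U_c)$ and $N\in\WHC(A_{c'},A_c)$, local $\ad\g$-finiteness of $K$ ensures that every bimodule map $K\to\widetilde I(N):=\Hom_{A_c}(Q_c,\,Q_{c'}\otimes_{A_{c'}}N)$ automatically factors through the $\ad\g$-locally finite subspace $I(N)$, so that $\Hom_{\hc}(K,I(N))=\Hom_{(\U_{c'},\U_c)}(K,\widetilde I(N))$. The standard tensor--hom adjunction then rewrites this as $\Hom_{(\U_{c'},A_c)}(K\otimes_{\U_c}Q_c,\;Q_{c'}\otimes_{A_{c'}}N)$. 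Theorem~\ref{main_thm}(ii) says that $K\otimes_{\U_c}Q_c$ is a Whittaker $\U_{c'}$-module whose Whittaker space is canonically $\whb K\otimes_{A_c}\Wh^\m Q_c=\whb K$ (since $\Wh^\m Q_c=A_c$), as $(A_{c'},A_c)$-bimodules. Applying Skryabin's equivalence from Proposition~\ref{skr} converts the Hom into $\Hom_{(A_{c'},A_c)}(\whb K,N)$, giving the desired adjunction.

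What remains for part (ii) is to check that $I(N)$ genuinely lies in $\hc(\U_{c'},\U_c)$: the $\ad\g$-local finiteness is built into~\eqref{I}, but one still needs $I(N)$ to be finitely generated as a bimodule and to satisfy the weak HC support condition. My plan is to pick a good filtration on $N$, induce the tensor Kazhdan filtration on $Q_{c'}\otimes_{A_{c'}}N$ whose associated graded is $\C[M]\otimes\gr N$ by Proposition~\ref{skr}(ii), use this to filter $\widetilde I(N)$, and then extract $I(N)$ as the subbimodule generated by a finite-dimensional piece of low Kazhdan degree; nonnegativity of the Kazhdan filtration on $Q_{c'}$ and the contracting $\bullet$-action make this extraction work, while $G$-equivariance places the bimodule characteristic variety on the diagonal of $\N\times\N$. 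The main obstacle I expect is part (i), specifically running a clean Gabber involutivity and multiplicity argument over the singular Poisson scheme $\sn$; the smoothest route will probably be to lift modules to the noncommutative symplectic resolution of $\sn$ whose construction is one of the goals of Section~6, and carry out the microlocal argument there on the smooth symplectic variety $\ss$.
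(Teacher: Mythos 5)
Your adjunction computation in part (ii) is essentially identical to the paper's: tensor--hom rewrites $\Hom_{\bimod{\U_{c'},\U_{c}}}(K,\widetilde I(N))$ as $\Hom_{\bimod{\U_{c'},A_c}}(K/K\m_\chi,\,Q_{c'}\otimes_{A_{c'}}N)$, Skryabin's equivalence converts this to $\Hom_{\bimod{A_{c'},A_c}}(\Wh^\m_\m K,N)$, and $\ad\g$-local finiteness of $K$ lets one replace $\widetilde I(N)$ by $I(N)$. For part (i), your opening paragraph is too loose as stated over the singular $\sn$ (coisotropicness alone does not bound length, and the multiplicity invariant is not cleanly defined there), but you correctly guess that the right move is to pass to the resolution: the paper takes exactly this route, using the $\loc$ functor and Proposition~\ref{qqgr_bimod} to show $\var(\loc N)\subset\ZZ\cap(\ss\times\ss)$, a Lagrangian variety with finitely many irreducible components, and then runs the characteristic-cycle multiplicity induction on $\ss\times\ss$. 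The key extra input you did not name is that the relevant Lagrangian is a slice of the Steinberg variety.

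The genuine gap is in your plan for showing $I(N)$ is a finitely generated bimodule. Filtering $Q_{c'}\otimes_{A_{c'}}N$ and then $\widetilde I(N)=\Hom_{A_c}(Q_c,\,Q_{c'}\otimes_{A_{c'}}N)$, and ``extracting $I(N)$ as the subbimodule generated by a finite-dimensional piece of low Kazhdan degree,'' does not go through: there is no a priori size control on a $\Hom$-space of this kind, and nothing guarantees the locally finite part sits in bounded Kazhdan degree. The paper proves finite generation by an entirely different mechanism. It enlarges the category to $\hc(\U_{c'},\wh\U_c)$ (right action of some power of $\I_c$ vanishing), invokes the Bernstein--Gelfand structure theory to the effect that this category has finitely many indecomposable projectives $P_1,\dots,P_m$, and reduces finite generation of $I(N)$ to finite-dimensionality of each $\Hom(P_j,I(N))$. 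By the adjunction already established, $\Hom(P_j,I(N))\cong\Hom(\Wh^\m_\m P_j,\,N)$, and this is finite-dimensional precisely because $N$ has finite length, i.e.\ because of part (i). In particular, part (ii) depends on part (i) in a way your plan does not anticipate, and the filtration-extraction step should be replaced by this representation-theoretic reduction.
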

\begin{proof} 
We begin the proof of (ii) by showing the adjunction property. The latter says that,
for any
$K\in\hc(\U_{c'},\U_{c}),\ N\in\WHC(A_{c'},A_{c}),$ there is  a bifunctorial isomorphism
\beq{Iadj}
\Hom_{\bimod{A_{c'},A_{c}}}(\Wh^\m_\m K,\ N)\
\cong\
\Hom_{\bimod{\U_{c'},\U_{c}}}(K,\ I(N)).
\eeq

To prove this,  using the definition of $\wt{I}(N),$ we compute 
\begin{align}
\Hom_{\bimod{\U_{c'},\U_{c}}}(K,\ \wt{I}(N))&=
\Hom_{\bimod{\U_{c'},\U_{c}}}\big(K,\ \Hom_{\Rmod{A_c}}(Q_{c},\
Q_{c'}\o_{A_{c'}} N)\big)\nonumber\\
&=\Hom_{\bimod{\U_{c'},\U_{c}\o A_c}}(K\o Q_{c},\
Q_{c'}\o_{A_{c'}} N)\nonumber\\
&=\Hom_{\bimod{\U_{c'}, A_c}}(K\o_{\U_{c}} Q_{c},\
Q_{c'}\o_{A_{c'}} N)\label{qqq}\\
&=\Hom_{\bimod{\U_{c'}, A_c}}(K/K\mc,\
Q_{c'}\o_{A_{c'}} N).\nonumber
\end{align}

Now, both  $K/K\mc$ and
$Q_{c'}\o_{A_{c'}} N,$ viewed as left $\U_{c'}$-modules, are objects of
$\catu$. For these objects, we have $\Wh^\m(K/K\mc)=\Wh^\m_\m K$
and $\Wh^\m(Q_{c'}\o_{A_{c'}} N)=N$.
Hence, by the Skryabin equivalence, we get
$\Hom_{\U_{c'}}(K/K\mc,\
Q_{c'}\o_{A_{c'}} N)=
\Hom_{A_{c'}}(\Wh^\m_\m K, N).$

We deduce an isomorphism
$$\Hom_{\bimod{\U_{c'}, A_c}}(K/K\mc,\
Q_{c'}\o_{A_{c'}} N)=\Hom_{\bimod{A_{c'},A_c}}(\Wh^\m_\m K, N).$$
Thus, from \eqref{qqq}, we obtain
\beq{II}\Hom_{\bimod{\U_{c'},\U_{c}}}(K,\ \wt{I}(N))=
\Hom_{\bimod{A_{c'},A_c}}(\Wh^\m_\m K, N).
\eeq

Observe next  that since the $\ad\g$-action on $K$ is locally finite
the imbedding $I(N)\into \wt{I}(N)$ induces a bijection
$\Hom_{\bimod{\U_{c'},\U_{c}}}(K,\ I(N))\iso
\Hom_{\bimod{\U_{c'},\U_{c}}}(K,\ \wt{I}(N)).$
Hence, in the  $\Hom$-space on the left of \eqref{II}, 
one may replace $\wt{I}(N)$ by $I(N)$.
The resulting isomorphism yields \eqref{Iadj}.

To complete  the proof of part (ii) we must show that
$I(N)$ is a finitely generated  $(\U_{c'},\ug)$-bimodule.
To this end, we need to enlarge the
category $\hc(\U_{c'},\U_{c})$ as follows.
Recall first that
 $\I_c\sset\zg$ denotes the maximal ideal in the center of
the enveloping algebra $\ug$. Let $K$ be a
finitely generated $(\U_{c'},\ug)$-bimodule.
We say that $K$ is a Harish-Chandra  $(\U_{c'},\wh\U_{c})$-bimodule
if the adjoint $\g$-action on $K$ is locally finite and, moreover,
there exists an large enough integer $\ell=\ell(K)\gg0$ such that
$K$ is annihilated by the right action of the ideal $(\I_c)^\ell$,
that is, we have $K\cdot(\I_c)^\ell=0.$
Let $\hc(\U_{c'},\wh\U_{c})$ be the full subcategory
of $\bimod{\U_{c'},\ug}$ whose objects are 
 Harish-Chandra  $(\U_{c'},\wh\U_{c})$-bimodules.
The structure of the category  $\hc(\U_{c'},\wh\U_{c})$
has been analyzed by Bernstein and Gelfand  \cite{BGe}.
It turns out that any  Harish-Chandra  $(\U_{c'},\wh\U_{c})$-bimodule
has finite length. Furthermore, 
it was shown in {\em loc cit} that the category $\hc(\U_{c'},\wh\U_{c})$
has enough projectives
and there are only finitely many nonisomorphic
indecomposable projectives $P_j, \ j=1,\ldots, m,$ say.

Next, we observe that the algebra
$A$, introduced in Remark \ref{A},
 comes equipped with a natural ascending filtration
such that one has $\gr A=\C[\s]$. 
A finitely generated $(A_{c'},A)$-bimodule $N$ will be called
a weak  Harish-Chandra  $(A_{c'},A)$-bimodule if 
one has $\var N\sset \sn\sset\sn\times\s$, where
$\sn\into\sn\times\s$ is the diagonal imbedding.
We let $\WHC(A_{c'},\wh A_{c})$ denote the full subcategory
of the category $\bimod{A_{c'},A}$ whose objects are
 weak  Harish-Chandra  $(A_{c'},A)$-bimodules $N$ such that
$N\cdot(\IJ_c)^\ell=0$ holds for a  large enough integer $\ell=\ell(N)\gg0$.
As we will see later, the arguments 
of  \S\ref{strong} can be used to show that
any object of the category  $\WHC(A_{c'},\wh A_{c})$
has finite length.

Clearly, the category $\hc(\U_{c'},\U_{c})$ may be
viewed as a full subcategory in  $\hc(\U_{c'},\wh\U_{c})$,
resp. the category $\WHC(A_{c'},A_{c})$ may be
viewed as a full subcategory in  $\WHC(A_{c'},\wh A_{c})$.
It is straightforward to extend our earlier definitions and introduce
a functor $\Wh_\m^\m:\ 
\hc(\U_{c'},\wh\U_{c})$ $\to\WHC(A_{c'},\wh A_{c})$.
One also defines a functor $I$ in the opposite direction
such that an analogue of  formula \eqref{Iadj} holds.

We are now ready to complete the proof of Theorem \ref{adj}(ii) by
showing that
$I(N)$ is a finitely generated $(\U_{c'},\ug)$-bimodule,
for any $N\in \WHC(A_{c'},\wh A_{c})$.
It suffices to show, in view of the results
of \cite{BGe} cited above, that,  for each $j=1,\ldots, m,$ the vector space
$\Hom_{\bimod{\U_{c'},\wh\U_{c}}}(P_j, \ I(N))$ has finite dimension.
To see this, we use the analogue of formula \eqref{Iadj}, which yields
\beq{ddd}
\dim\Hom_{\bimod{\U_{c'},\wh\U_{c}}}(P_j, \ I(N)
=
\dim\Hom_{\bimod{A_{c'},\wh A_{c}}}(\Wh^\m_\m P_j,\ N).
\eeq
But, we know that $\Wh^\m_\m P_j\in \WHC(A_{c'},\wh A_{c})$
and that the object $N\in \WHC(A_{c'},\wh A_{c})$ has finite length.
It follows that the dimension on the right of  formula \eqref{ddd}
is finite, and we are done.
\end{proof}

\subsection{Homology vanishing}\label{vanish_sec}
 Recall the Lie subalgebra $\m_\chi\sset\ug.$
The Kazhdan 
filtration on $\ug$ restricts to a
filtration on  $\m_\chi$. The latter induces
 an ascending
filtration $\kk_j(\wedge^\hdot\m_\chi),\,j\geq0$, on
 the exterior algebra of $\m_\chi$.

Given a {\em right} 
$\ug$-module $V$ equipped
with a Kazhdan filtration $\kk_\idot V$, we form a tensor product
$C:=V\o \wedge^\hdot\m_\chi$, and let $\kk_nC=
\sum_{n=i+j} \kk_i V \o \kk_j(\wedge^\hdot\m_\chi)$
be the tensor product filtration.

We view $V$ as a $\m_\chi$-module, and write $H_\idot(\m_\chi, V)$
for the corresponding Lie algebra homology with coefficients in $V$.
The latter may be computed by means of the
 complex $(C, \partial)$, where
$\partial: C\to C$ is the standard
Chevalley-Eilenberg differential, of degree $(-1)$. It is immediate to check that
the differential  respects the
filtration $\kk_\idot C$, making $C$ a filtered complex.

 Write $B\sset Z\sset C$ for the subspaces
of boundaries and cycles of the complex $C$, respectively.
Thus, we have $H_\idot(\m_\chi, V)=H_\idot(C)=Z/B$.
The filtration on $C$ induces, by restriction, a
filtration 
$\kk_p Z:=Z\cap\kk_p C,$  on the space of cycles.
The latter filtration induces a quotient filtration on homology.
Explicitly,    the
induced filtration on homology is given by
\beq{filtH}
\kk_p H(C):=(B+\kk_p Z)/B=\im[H(\kk_p C)\to H(C)],\qquad p\in\Z.
\eeq

There is  an associated standard
 spectral sequence
 with 0-th term, cf. \cite{CE}, Ch.15,
\beq{EE}
E^0_{p,q}=\wedge^{p+q}\m_\chi\o\gr_{-q}
V.
\eeq

 Recall the local algebra $\oo_\chi$ introduced
above Corollary \ref{flat}.
The lemma below is a slight generalization of a result due to Holland,
cf. \cite{H}, \S 2.4.
\begin{Lem} \label{holland} Let $V$ be a right $\ug$-module  equipped
with a {\em good} Kazhdan filtration  $\kk_\idot V$. Assume that
the localization of $\gr_\kk V$  at $\chi+\m^\perp$ is a flat
$\varpi^\hdot\oo_\chi$-module and, moreover,  the
induced filtration on $C$, the Chevalley-Eilenberg complex,
is {\em convergent} in the
sense of \cite{CE}, p. 321.

Then, for any $j>0$, we have $ H_j(\m_\chi,\gr_\kk V)=0$ and
 $ H_j(\m_\chi, V)=0.$
Moreover,
the natural projection yields a canonical graded $\C[\g^*]$-module  isomorphism
$$\gr_\kk V/(\gr_\kk V)\m_\chi\iso\gr_\kk (V/V\m_\chi)=\gr_\kk
H_0(\m_\chi, V).
$$
\end{Lem}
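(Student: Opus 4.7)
My plan is to exploit the convergent spectral sequence \eqref{EE} of the Kazhdan-filtered Chevalley--Eilenberg complex $(C,\pa)$, whose abutment is $H_\hdot(\m_\chi,V)$ equipped with the filtration \eqref{filtH}. The first task is to identify the differential $d^0$ on $E^0_{p,q}=\wedge^{p+q}\m_\chi\o\gr_{-q}V$. The CE differential splits as $\pa=\pa^{\text{act}}+\pa^{\text{br}}$, with action term $\pa^{\text{act}}(v\o m_1\wedge\cdots\wedge m_n)=\sum_i(-1)^{i-1}(vm_i)\o m_1\wedge\cdots\widehat{m_i}\cdots\wedge m_n$ and bracket term $\pa^{\text{br}}$ involving the commutators $[m_i,m_j]\in\m_\chi$. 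From the Rees presentation $\ree{\kk}\ug=T\g\o\C[t]/(x\o y-y\o x-[x,y]\o t^2)$ recalled in Section~3, such a commutator has Kazhdan degree $\deg m_i+\deg m_j-2$, strictly smaller than that of the product $m_im_j$. Hence $\pa^{\text{br}}$ is filtration-lowering and vanishes on the associated graded, whereas $\pa^{\text{act}}$ is degree-preserving. Thus $d^0$ is the Koszul differential on $\gr_\kk V\o\wedge^\hdot\m_\chi$ determined by the inclusion $\m_\chi\sset\sym\g=\gr_\kk\ug$ and the $\sym\g$-action on $\gr_\kk V$; equivalently, $E^1_{p,q}$ is the Koszul homology of the sequence $\m_\chi$ on $\gr_\kk V$, which is what the lemma denotes $H_{p+q}(\m_\chi,\gr_\kk V)$.

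The second step identifies $E^1$ with a $\Tor$ and then invokes the flatness hypothesis. Since $\chi+\m^\perp\sset\g^*$ is a linear subspace of codimension $\dim\m=\dim\m_\chi$, the (leading terms of) $\m_\chi\sset\sym\g$ form a regular sequence and the Koszul complex on $\m_\chi$ resolves $\C[\chi+\m^\perp]=\sym\g/(\m_\chi)$. Via $\varpi^*:\C[\m^*]\to\C[\g^*]$, this Koszul complex is the extension of scalars along $\varpi^*$ of the Koszul resolution of the residue field $\C_{\chi_\m}$ over $\C[\m^*]$. Consequently,
$$
H_{p+q}(\m_\chi,\gr_\kk V)\;\cong\;\Tor^{\C[\m^*]}_{p+q}(\C_{\chi_\m},\gr_\kk V).
$$
This $\Tor$ is $\chi_\m$-supported as a $\C[\m^*]$-module, so its support on $\g^*$ lies inside $\varpi\inv(\chi_\m)=\chi+\m^\perp$. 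Its vanishing can therefore be tested after localizing at that subvariety, and the hypothesis---that the localization of $\gr_\kk V$ is a flat $\varpi^\hdot\oo_\chi$-module---forces the localized $\Tor$ to vanish in positive degrees. This establishes $H_j(\m_\chi,\gr_\kk V)=0$ for all $j>0$, and in particular $E^1_{p,q}=0$ for $p+q>0$.

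The remaining two assertions of the lemma now follow by convergence. With $E^1$ supported entirely on the line $p+q=0$, all higher differentials on that line vanish, so $E^\infty=E^1$. The assumed convergence of the Kazhdan filtration on $C$ then identifies $\gr_\kk H_j(\m_\chi,V)\cong\bplus_{p+q=j}E^\infty_{p,q}$ as a separated, exhaustive associated graded. Hence $H_j(\m_\chi,V)=0$ for $j>0$, and in degree zero one reads off the graded isomorphism $\gr_\kk V/(\gr_\kk V)\m_\chi\iso\gr_\kk(V/V\m_\chi)$, induced by the obvious projection $V\onto V/V\m_\chi$. The main technical obstacle is the first step: recognizing---via the $t^2$-shift in the Rees presentation---that $d^0$ is the pure Koszul (action) differential and that the Lie-bracket contribution is absorbed into strictly lower Kazhdan filtration. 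Once that is in place, the argument reduces to the standard Koszul/$\Tor$ dictionary combined with the local flatness input.
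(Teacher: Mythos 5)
Your argument follows the paper's proof: both use the spectral sequence of the Kazhdan-filtered Chevalley--Eilenberg complex, identify the $E^0$-page with a Koszul complex, use the flatness hypothesis to force acyclicity in positive degree and collapse the spectral sequence at $E^1$, and invoke the two Cartan--Eilenberg convergence conditions to pass from the graded assertions to the actual ones. You add useful detail on why the Lie-bracket part of $\partial$ disappears on the associated graded (the $t^2$-shift in the Rees presentation), where the paper merely records the Koszul identification as an observation; conversely, the paper is more careful with the $Z^r_p$, $B^r_p$ bookkeeping (one needs convergence, not just degeneration, to get $E^\infty=E^1$) than your shorthand, but the substance is identical.
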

\begin{proof} We recall various standard objects associated
with the spectral sequence of a filtered complex. We follow 
\cite{CE}, Ch. 15, \S1-2 closely, except that we use homological, rather than 
{\em co}homological notation. 

First of all, for each $p\in\Z$, one defines a pair of vector spaces
$Z^\infty_p$ and 
$B^\infty_p$, in such a way that one has
\beq{grH1}
\gr_p  H_\idot(\m_\chi, V)=\gr_p  H_\idot(C)\cong Z^\infty_p/B^\infty_p.
\eeq

Further, 
one defines a chain of vector spaces, cf. \cite{CE}, p.317, 
$$\ldots\sset B^r_p\sset B^{r+1}_p\sset\ldots\sset
B^\infty_p\sset Z^\infty_p\sset\ldots
Z^{r+1}_p\sset Z^r_p\sset\ldots.
$$

Precise definitions of these objects are not important for our purposes,
they are given e.g. in  \cite{CE}, Ch. 15, \S1. What {\em is} important for
us
is  that the definitions imply $B^\infty_p=\cup_{r\geq 0}\ B^r_p$.
The assumption of the lemma that the filtration $\kk_\idot C$
be convergent means that, in addition, one  has, see \cite{CE}, ch. 15, \S2,
\beq{convergent}
Z^\infty_p=\cap_{r\geq 0}\ Z^r_p\quad\text{and}\quad
\cap_{r\geq 0}\ \kk_rH_p(C)=0, \quad\forall p\in\Z.
\eeq

Now, to prove the lemma, we observe that
the  zeroth page $(E^0, d^0)$ of the spectral
sequence \eqref{EE} may be identified with
the Koszul complex associated with the subscheme
$\varpi\inv(\chi_\m)\sset \g^*$.
Thus, the assumption of the lemma that the
localization of $\gr_\kk V$  at $\chi+\m^\perp$  be a flat
$\varpi^\hdot\oo_\chi$-module forces 
the Koszul complex be acyclic in positive degrees.
Therefore, the spectral sequence  degenerates at $E^1$.

The degeneration implies that, for any $p\in\Z$ and $r\geq 1$, one has $Z^r_p=Z^1_p$
and $B^r_p=B^1_p$.
Therefore, we have  $B^1_p=B^\infty_p$, and using the first equation in \eqref{convergent},
we get $Z^1_p=Z^\infty_p.$ Hence,
$H(E^0,d^0)=Z^1/B^1=Z^\infty_p/B^\infty_p=
\gr_p H_\idot(\m_\chi, V),$ by \eqref{grH1}.
 We conclude that one has $\gr_\kk H_0(\m_\chi, V)=
(\gr_k V)/(\gr_k V)\m_\chi$ and,
moreover,  
$\gr_\kk H_j(\m_\chi, V)=0$ for any $j>0$.
Finally, thanks to the second equation in \eqref{convergent},
we have
$\gr_\kk H_j(\m_\chi, V)=0\en\Rightarrow\en H_j(\m_\chi, V)=0,$
and the lemma is proved.
\end{proof}

It is important to observe that the filtration $\kk_\idot C$, on $C$,
gives rise to {\em two} natural filtrations on the subspace
$B= \partial C$, of the boundaries. These two filtrations are defined as follows
\beq{KB} \kk_\idot B\ :=\ \partial(\kk_\idot C),\quad\text{resp.}
\quad
\kk'_\idot B\ :=\ B\ \cap\  \kk_\idot C.
\eeq

It is clear that on has $\kk_\idot B\sset \kk_\idot' B,$ but this inclusion need not
be an equality, in general. We have the following
result
\begin{Lem}\label{CE}  Assume the above filtrations $\kk_\idot B$ and
$\kk'_\idot B$ are {\em
equivalent},
i.e. there exists an integer $\ell\geq 1$ such that,
for all $p\in\Z$, one has $\kk'_{p-\ell} B\sset\kk_{p-1}
B$.

Then, we have $Z^\infty_p=\cap_{r\geq 0}\ Z^r_p,\ \forall p\in\Z$, i.e. the first equation
in
\eqref{convergent}, holds.
\end{Lem}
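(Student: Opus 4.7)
The easy inclusion $Z^\infty_p \subset \cap_{r \geq 0} Z^r_p$ is automatic: any cycle $c \in \kk_p C$ satisfies $\partial c = 0$, which lies in every $\kk_{p-r} C$. So the content of the lemma is the reverse inclusion; my plan is to take an arbitrary $c \in \cap_{r \geq 0} Z^r_p$ and argue that $\partial c = 0$.

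By definition $c \in \kk_p C$ and $\partial c \in \kk_{p-r} C$ for every $r \geq 0$. Because $\partial c$ also lies in the subspace $B$ of boundaries, one immediately has
\[
\partial c\ \in\ B \cap \kk_{p-r} C\ =\ \kk'_{p-r} B, \qquad \forall\, r \geq 0.
\]
Next I would invoke the hypothesis with $p$ replaced by $p - r + \ell$, which yields $\kk'_{p-r} B \subset \kk_{p-r+\ell-1} B$. Since $\kk_\idot B$ is by definition the image filtration $\partial(\kk_\idot C)$ and $\partial$ respects the Kazhdan filtration, one has $\kk_n B = \partial(\kk_n C) \subset \kk_n C$ for every $n$, and therefore $\partial c \in \kk_{p-r+\ell-1} C$ for every $r \geq 0$. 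Letting $r$ vary forces $\partial c$ into arbitrarily low layers of the filtration on $C$.

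The final step is to invoke separatedness of the Kazhdan filtration on $C$, i.e.\ $\cap_n \kk_n C = 0$. This holds in the intended application because $C = V \otimes \wedge^\hdot \m_\chi$ is the tensor product of a separated good Kazhdan filtration on $V$ (Corollary \ref{FK_cor}(i)) with the bounded filtration on the finite-dimensional exterior algebra $\wedge^\hdot \m_\chi$. The conclusion $\partial c = 0$, hence $c \in Z^\infty_p$, then follows.

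The main conceptual role of the hypothesis will be to bridge the gap between the subspace filtration $\kk'_\idot B$, in which $\partial c$ naturally lives, and the image filtration $\kk_\idot B \subset \kk_\idot C$, where separatedness of the filtration on $C$ can take over; the shift by $\ell - 1$ is harmless once one intersects over all $r$. The main obstacle in any purely abstract form of the lemma is that the hypothesis by itself does not imply $\cap_n \kk_n B = 0$, so one genuinely needs the separatedness of $\kk_\idot C$ that comes bundled with the good-filtration setup of the previous subsections.
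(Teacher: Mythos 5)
Your proposal does not prove Lemma \ref{CE} as stated. The lemma is a statement about an abstract filtered complex whose sole hypothesis is the equivalence of the two filtrations $\kk_\idot B$ and $\kk'_\idot B$; it does \emph{not} assume separatedness of the ambient filtration $\kk_\idot C$. You import that separatedness as an extra input, and once you have it the conclusion is in fact immediate and the stated hypothesis plays no role at all: for $c\in\cap_{r}Z^r_p$ one already has $\partial c\in\kk_{p-r}C$ for every $r$ directly from the definition of $Z^r_p$, and $\cap_n\kk_n C=0$ then forces $\partial c=0$. Your detour through $\kk'_{p-r}B\sset\kk_{p-r+\ell-1}B\sset\kk_{p-r+\ell-1}C$ replaces the index $p-r$ by the \emph{larger} $p-r+\ell-1$, i.e.\ it produces a strictly weaker containment than the one you started with (the filtration is ascending). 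That the hypothesis of the lemma can be deleted from your argument without loss is the tell-tale sign that you are proving a different, easier statement, namely: ``if $\cap_n\kk_n C=0$ then $Z^\infty_p=\cap_r Z^r_p$.''

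The paper's proof takes a genuinely different route that uses the hypothesis in an essential way and does \emph{not} rely on separatedness of $\kk_\idot C$. It constructs the composite
$$\delta\ccirc\partial:\ H(\kk_p C/\kk_{p-\ell}C)\ \longrightarrow\ \kk'_{p-\ell}B\ \longrightarrow\ H(\kk_{p-1}C),$$
shows that $\delta$ vanishes exactly because $\kk'_{p-\ell}B\sset\kk_{p-1}B=\partial(\kk_{p-1}C)$ (the hypothesis), and then invokes the convergence criterion of \cite[Ch.~15, Prop.~2.1]{CE}, which says precisely that vanishing of this composite yields the first equation in \eqref{convergent}; that criterion is designed to give convergence without assuming the ambient filtration is separated. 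So your concluding worry that ``the hypothesis by itself does not imply'' the result and that one ``genuinely needs'' separatedness is not right: the Cartan--Eilenberg criterion closes the gap. It is true that in the actual applications the relevant filtration on $C$ turns out to be separated (an argument in the spirit of Corollary \ref{FK_cor}(i) applies to the $\ad h$-locally finite $K$), so your shortcut does reach the correct conclusion there; but as a proof of Lemma \ref{CE} as written it is incomplete, and you should either add the separatedness hypothesis to the statement you claim to prove, or argue via the Cartan--Eilenberg criterion as the paper does.
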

\begin{proof} 
For any $\ell\geq 1,$ we have $\kk_{\idot-\ell} C\sset\kk_{\idot-1}C$,
hence, one gets an obvious imbedding 
$B\ \cap\  \kk_{\idot-\ell} C\into Z\cap \kk_{\idot-1}C$.
Using the definitions of  various filtrations introduced above,
this  imbedding  may be rewritten as follows
$\kk'_{\idot-\ell} B\into \kk_{\idot-1}Z$. 
We may further compose the imbedding with a projection to homology to
obtain the following composite
\beq{ell1}
\delta:\ 
\kk'_{\idot-\ell} B\into \kk_{\idot-1}Z\onto
\kk_{\idot-1}Z/\kk_{\idot-1}B=\kk_{\idot-1}Z/\pa(\kk_{\idot-1}C)=
H(\kk_{\idot-1}C).
\eeq

Next, we fix $p\in\Z$ and consider the complex $\kk_pC/\kk_{p-\ell}C$.
By definition, we have
$$
H(\kk_pC/\kk_{p-\ell}C)= \frac{\{z\in\kk_pC\mid\pa(z)\in
 \kk_{p-\ell}C\}}{\kk_{p-\ell}Z + \pa(\kk_pC)}=
 \frac{\{z\in\kk_pC\mid\pa(z)\in
\kk'_{p-\ell}B\}}{\kk_{p-\ell}Z + \pa(\kk_pC)}.
$$

The differential $\pa$ clearly annihilates the space
$\kk_{p-\ell}Z + \pa(\kk_pC)$, in the denominator of the rightmost
term. Therefore,
we see that applying the differential $\pa$ to the numerator
of that term yields  a well defined map
$$\pa: \ 
H(\kk_pC/\kk_{p-\ell}C)\to \kk'_{p-\ell}B,\quad z\mto\pa(z).$$

Thus,  we have constructed the following diagram
$$
\xymatrix{
H(\kk_\idot C/\kk_{\idot-\ell}C)\ \ar[r]^<>(0.5){\pa}&
\  \kk'_{\idot-\ell} B\ \ar[r]^<>(0.5){\delta}&
\ H(\kk_{\idot-1}C).
}
$$

Now, let $\ell$ be such that the assumption of the lemma
holds, so that we have
$\kk'_{\idot-\ell} B\sset \kk_{\idot-1} B$. Then the corresponding
 map $\delta$, in \eqref{ell1}, clearly vanishes.
It follows that the composite map $\delta\ccirc\pa$ vanishes as well,
and we have
$$\im\big[\delta\ccirc\pa:\ H(\kk_\idot C/\kk_{\idot-\ell}C)\to
H(\kk_{\idot-1}C)\big]=0.
$$

The last equation insures that we are in a position to apply a criterion
given in \cite[ch. ~15, Proposition 2.1]{CE}. Applying that criterion
yields the statement of the lemma.
\end{proof}
\subsection{Proof of Theorem \ref{main_thm}}\label{pf_sec}
Throughout this subsection, we
fix $K\in \hc(\U_{c'},\U_{c})$.

Choose a finite dimensional
 $\ad\g$-stable subspace $K_0\sset K$ that generates $K$ as a
bimodule. For any $\ell\geq 0,$ let 
$$F_\ell K:=
\sum_{i+j\leq \ell}\ \U^{\leq i}\g\cdot K_0\cdot\U^{\leq j}\g
=\U^{\leq \ell}\g\cdot K_0=K_0\cdot\U^{\leq \ell}\g.$$
In this way, one may define a good $\ad\g$-stable filtration
on $K$.

Now, let  $F_\idot K$ be an arbitrary good $\ad\g$-stable filtration
on $K$.
Let $\kk_\idot K$ be the Kazhdan filtration associated with the filtration
 $F_\idot K$ via formula \eqref{FK}. 

We first view $K$ as a right $\mc$-module.
A key step in the proof of Theorem \ref{main_thm}
is played by the following

\begin{Lem}\label{hom_van} For all $j>0$, we have  $ H_j(\m_\chi,\gr_\kk K)=0$
and $H_j(\m_\chi, K)=0$.

Furthermore, the canonical projection
$\gr_\kk K/(\gr_\kk K)\m_\chi\iso\gr_\kk(K/K\mc)$
is an isomorphism of $M$-equivariant
$\C[\N]$-modules.
\end{Lem}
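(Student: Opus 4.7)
The plan is to apply Lemma \ref{holland} to $K$ viewed as a right $\ug$-module, and hence as a right $\m_\chi$-module. Start with the $\ad\g$-stable good filtration $F_\bullet K$ furnished by the preamble, and let $\kk_\bullet K$ be the associated Kazhdan filtration defined by \eqref{FK}. The local finiteness of the $\ad h$-action (which is built into any $\ad\g$-locally finite bimodule) puts us in the setting of Corollary \ref{FK_cor}, from which $\kk_\bullet K$ is automatically good and separated, and the identification $\gr_FK\cong\gr_\kk K$ of \eqref{F=K} holds.

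The next step is to verify the flatness hypothesis of Lemma \ref{holland}. Since $F_\bullet K$ is $\ad\g$-stable and each $F_iK$ is finite-dimensional modulo a submodule, the associated graded $\gr_\kk K$ is a $G$-equivariant coherent sheaf on $\g^*$, viewed via the \emph{right} action of $\sym\g=\gr_\kk \U_c$. Now invoke Corollary \ref{flat}(ii) with $Y=\g^*$ and $f=\Id$: the localization of $\gr_\kk K$ at the fiber $\chi+\m^\perp=\varpi\inv(\chi_\m)$ is flat over $\varpi^\natural\oo_\chi$. This is the key input that will force the Koszul spectral sequence computing $H_\idot(\m_\chi,\gr_\kk K)$ to collapse at $E^1$ in non-negative degrees.

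It remains to check that the induced Kazhdan filtration on the Chevalley–Eilenberg complex $C=K\o\wedge^\hdot\m_\chi$ is convergent in the sense of \eqref{convergent}. The second condition, $\cap_r\kk_rH_p(C)=0$, follows from the separatedness of $\kk_\bullet K$, and hence of the tensor product filtration on $C$, combined with the noetherianity of $\ree{\kk}(\U_{c'}\o\U_c^{\op})$ (any good filtration on a finitely generated module over a noetherian Rees algebra is separated). The first condition, $Z^\infty_p=\cap_r Z^r_p$, is reduced by Lemma \ref{CE} to the equivalence of the two natural filtrations $\kk_\bullet B$ and $\kk'_\bullet B$ on the boundaries $B=\pa C$ defined in \eqref{KB}. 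Both give rise to finitely generated Rees submodules inside the noetherian Rees module $\ree{\kk}C$, so both are good, and hence equivalent by (the submodule analogue of) Lemma \ref{good}(i). This verification of convergence is the main technical obstacle.

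With the three hypotheses in place, Lemma \ref{holland} yields the vanishing $H_j(\m_\chi,\gr_\kk K)=0=H_j(\m_\chi,K)$ for $j>0$, together with the graded isomorphism $\gr_\kk K/(\gr_\kk K)\m_\chi\iso\gr_\kk(K/K\mc)$. The $\ad\g$-stability of $F_\bullet K$ transfers to an $\Ad G$-equivariant structure on $\gr_\kk K$, and in particular to an $M$-equivariant one; since both central characters act on $K$ via the augmentation ideal of $(\sym\g)^G$, the right $\sym\g$-action on $\gr_\kk K$ factors through $S_o=\C[\N]$, giving the asserted $M$-equivariant $\C[\N]$-module structure on the isomorphism.
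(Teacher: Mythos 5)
Your overall architecture matches the paper's: reduce to Lemma \ref{holland}, verify the flatness hypothesis via Corollary \ref{flat}(ii), and verify convergence in the sense of \eqref{convergent} by splitting it into the two conditions. Your treatment of the first convergence condition (equivalence of the two filtrations $\kk_\idot B$ and $\kk'_\idot B$ via good-ness of both as Rees submodules, then Lemma \ref{CE}) is sound and essentially parallels the paper, which packages the same argument by viewing $B$ as a finitely generated left $\U_{c'}$-module.

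However, there is a genuine gap in your verification of the second convergence condition, $\cap_r \kk_r H_p(C)=0$. You assert that ``any good filtration on a finitely generated module over a noetherian Rees algebra is separated.'' This is false in the relevant generality: the Kazhdan filtration $\kk_\idot\ug$ (and hence $\kk_\idot\U_c$) is a genuine $\Z$-filtration, unbounded below, and a good filtration over such an algebra need not be separated. Moreover, separatedness of $\kk_\idot K$ does not pass automatically to the subquotient filtration \eqref{filtH} on homology. The paper's actual argument here is nontrivial and is the crux of the lemma: one must show that $H_j(\m_\chi,K)$, viewed as a \emph{left} $\U_c$-module, lies in $\catu$. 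This is done by observing that the Chevalley--Eilenberg complex $C=K\o\wedge^\hdot\mc$ carries a left $\g$-action (trivial on $\wedge^\hdot\mc$) commuting with the Lie-derivative $\mc$-action; their difference is an adjoint $\mc$-action which is locally nilpotent because the $\ad\g$-action on $K$ is locally finite and $\mc$ is nilpotent. Since the Lie-derivative action is zero on homology, the left $\mc$-action on $H_j(\m_\chi,K)$ coincides on homology with this locally nilpotent adjoint action, whence $H_j(\m_\chi,K)\in\catu$. Only then can one invoke the fact that a good Kazhdan filtration on an object of $\catu$ is bounded below (because $\kk_\idot Q_c$, hence $\kk_\idot A_c$, is non-negative), and hence separated. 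Without this ``Lie derivative'' argument your proof of separatedness does not go through.
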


\begin{proof} The result is clearly a consequence of
 Lemma \ref{holland}, provided we show that all the assumptions of
that lemma hold in our present setup.

First, we verify the assumption that, for  the above defined Kazhdan
filtration on $K$, 
the $\varpi^\hdot\oo_\chi$-module $\varpi^*\oo_\chi
\bo_{\C[\N]}\gr_\kk K$ is flat.
To this end, we note that
the construction of the filtration $F_\idot K$ insures 
 that $\gr_F K$ is an $\Ad G$-equivariant finitely generated
$\C[\N]$-module, where $\N\sset\N\times\N$
is the diagonal copy of the nilpotent variety.
 By Corollary \ref{flat}(ii), we conclude that
 the
localization of $\gr_F K$  at $\chi+\m^\perp$  
 is a flat $\varpi^\hdot\oo_\chi$-module.

Further, by \eqref{F=K}, we have
a $\C[\N]$-module isomorphism $\gr_F K\cong\gr_\kk K$.
Moreover, it is immediate from definitions that this  isomorphism
is compatible with  $\ad\m$-actions on each side.
It follows, in particular,
that $\varpi^*\oo_\chi
\bo_{\C[\N]}\gr_\kk K$ is a flat $\varpi^\hdot\oo_\chi$-module,
and we have an
$\Ad M$-equivariant $\C[\N]$-module isomorphism
$\gr_\kk K/(\gr_\kk K)\m_\chi\cong\gr_F K/(\gr_F K)\m_\chi$.

To complete the proof, we show that our filtration $\kk_\idot K$ is convergent,
i.e., both equations in \eqref{convergent} hold in the situation at hand.

To see this, we observe  that  the filtration on $K$ being $\ad\g$-stable,
it is  good as a filtration on $K$ viewed as a left $\U_{c'}$-module.
Further, the differential
in the Chevalley-Eilenberg complex $C:=\wedge^\hdot\m_\chi\o K,$
involved in  Lemma \ref{holland},
is clearly a morphism of {\em left} $\U_{c'}$-modules.
Thus, the subspace $B=\pa C\sset C$, of the boundaries
of the Chevalley-Eilenberg complex, as well as each homology group
$H_\idot(\m_\chi, K),$ acquires a natural
structure of  left $\U_{c'}$-module.
Being a subquotient of $C$, any of these 
 $\U_{c'}$-modules is finitely generated.
Hence, each of the two Kazhdan filtrations on $B$
defined in \eqref{KB},
as well as the Kazhdan filtration on 
$H_\idot(\m_\chi, K)$
defined in \eqref{filtH},
is a good filtration on the corresponding left
 $\U_{c'}$-module. It follows, in particular,
that the two  filtrations on $B$ are equivalent,
cf. Lemma \ref{good}.
Thus, the first equation in \eqref{convergent}
holds by Lemma \ref{CE}.

It remains to prove that the Kazhdan filtration on 
$H_\idot(\m_\chi, K)$
defined in \eqref{filtH} is separated.
Recall that any good Kazhdan filtration on an object of the category
$\catu$ is bounded below, hence, separated.
Thus, it suffices to show that, for any $j\geq 0$,
$H_j(\m_\chi, K)$ viewed 
as a left $\U_{c}$-module, is an object of $\catu$.

To this end, we recall that
the  Chevalley-Eilenberg complex of a right module
over a Lie algebra has a natural action of that
Lie algebra, by  the `Lie derivative'. It is well known
that the Lie derivative action 
on the Chevalley-Eilenberg complex induces the zero action on
each homology group.
Applying this to our Harish-Chandra 
$\ug$-{\em bi}module $K$,
we see  that the complex $C=K\o\wedge^\hdot\mc$ has a left
$\g$-action, defined as a tensor product of the  left $\g$-action on $K$
and the zero $\g$-action on $\wedge^\hdot\mc$.
There is also an $\mc$-action,  by  the `Lie derivative'.
The left $\g$-action and the $\mc$-action on $C$ commute, and
the difference of the left $\mc$-action and  the Lie derivative
$\mc$-action gives a
a well defined
$\mc$-action on $C$, to be called the adjoint action. 
The  adjoint $\g$-action 
on $K$  being locally finite and the Lie algebra $\mc$ being nilpotent,
 it follows easily that
 the adjoint $\mc$-action
on $C$ is locally  nilpotent.
We conclude that the left  $\mc$-action
on  $H_j(\m_\chi, K)$, induced by the left $\mc$-action on $C$,
 may be written
as a sum of a  locally  nilpotent adjoint
action and of the Lie derivative action, the latter being 
known to be the zero action.
Thus, we have shown that  $H_j(\m_\chi, K)\in\catu$.
This completes the proof.
\end{proof}

\begin{proof}[Proof of Theorem \ref{main_thm}]
It follows from the preceding paragraph that we have $K/K\mc=
H_0(\mc, K)\in\catu$,
cf. \eqref{whb}. Thus, we get a functor
$\Wh_\m: \  \hc(\U_{c'},\U_{c})\to \catu,\ K\mto K/K\mc$.
The homology vanishing of Lemma  \ref{hom_van}
implies that this functor is  {\em exact}.
The functor $\Wh^\m: \catu\to \Lmod{A_c}$ being an equivalence, cf. Proposition \ref{skr}(i),
we deduce the exactness of
the composite functor $\Wh^\m\ccirc\Wh_\m$.
The exactness statement of part (iii) of the theorem now follows
by writing $\Wh^\m_\m=\Wh^\m\ccirc\Wh_\m$.

Next,
fix an  $\ad\g$-stable
good filtration  on $K$, and write
$\Wh^\m_\m K=\Wh^\m(K/K\mc)$.
It follows, in particular, that
the induced filtration on $K/K\mc$ is $\m$-stable. Further,
by Lemma \ref{hom_van}, we get (below,
$(-)|_\sn$ stands for a restriction of a $\C[\N]$-module to 
the subvariety $\sn\sset\g^*$)
$$
\gr\Wh^\m_\m K=\gr\Wh^\m(K/K\mc)=[\gr(K/K\mc)]\,\big|_\sn=
[\gr_\kk K/(\gr_\kk K)\m_\chi]\,\big|_\sn=(\gr_\kk K) \big|_\sn.
$$
where  the second equality
is due to Proposition \ref{skr}(ii)
applied to the object $V=K/K\mc\in\catu$.
This proves part (i) of the theorem.

Observe that proving commutativity of the
diagram of part (ii) is equivalent,
thanks to  Skryabin's equivalences, cf. Proposition
\ref{skr}(i)-(ii), to showing
commutativity of the following diagram
\beq{fdiag}
\xymatrix{
\catu\ar[d]_<>(0.5){K\o_{\U_{c}}(-)}
&&&\modu{\AA_{c}}\ar[lll]_<>(0.5){Q_{c}\o_{A_{c}}(-)}
\ar[d]^<>(0.5){\Wh^\m_\m K\o_{\AA_{{c}}}(-)}\\
{{(\U_{{c'}},\mc)\mbox{-}\operatorname{mod}}}
&&&\modu{\AA_{{c'}}}.\ar[lll]_<>(0.5){Q_{c'}\o_{A_{c'}}(-)}
}
\eeq

To prove \eqref{fdiag}, write 
$\Wh^\m_\m K=\Wh^\m\ccirc\Wh_\m K$. Thus, one has a
canonical map
\beq{Phi}
Q_{c'}\o_{A_{c'}}\Wh^\m(\Wh_\m K)\stackrel{\Phi}\too \Wh_\m K=K/K\cd\m_\chi=
K\o_{\U_{c}}Q_{c}.
\eeq

Since
$\Wh_\m K\in \Lmod{(\U_{c'},\mc)},$  the map $\Phi$ is
actually an isomorphism, by
Skryabin's equivalence. 
Hence, tensoring diagram \eqref{Phi} with a left $\AA_{c}$-module $N$, 
we get a chain of isomorphisms
$$
Q_{c'}\ \o_{A_{c'}}\ \Wh^\m_\m K\ \o_{A_{c}} N\ 
=\ 
(Q_{c'}\o_{A_{c'}}\Wh^\m_\m K)\o_{A_{c}} N
\ \stackrel{_{\eqref{Phi}}}\too\
(K\o_{\U_{c}}Q_{c})\o_{A_{c}} N.
$$

 The composite  isomorphism above
 provides the  isomorphism of functors
that makes diagram  \eqref{fdiag} commute,
and  Theorem
\ref{main_thm}(ii) follows.

We now complete the proof of part (iii) of the theorem.
To this end, pick a good $\ad\g$ stable filtration on
our \whc\; bimodule $K$.
We know by part (i) that $\gr\Wh^\m_\m K=(\gr_\kk K)|_\sn.$ 
This is clearly a finitely
generated $\C[\sn\times\sn]$-module supported on the diagonal
in $\sn\times\sn$. It follows that $\Wh^\m_\m K$ is itself finitely
generated and we have
$\Wh^\m_\m K\in\WHC(A_{c'},A_{c})$.

Further, it is clear that if $\supp(\gr\Wh^\m_\m K)\sset\N^\Diamond$,
then we have $\gr\Wh^\m_\m K=(\gr_\kk K)|_\sn=0$ since
$\N^\Diamond\cap\sn=\emptyset$.
The filtration on $\Wh^\m_\m K$ is bounded below, hence,
separated. Hence, the equation $\gr\Wh^\m_\m K=0$ implies
$\Wh^\m_\m K=0$. Thus, we have shown that the functor
$\Wh^\m_\m$ factors through the quotient category
$\hc(\U_{c'},\U_{c})/\hc_{\N^\Diamond}(\U_{c'},\U_{c}).$

It remains to show that the resulting functor $\Wh^\m_\m$ is faithful.
 To prove this,  observe that
for $K\in \hc(\U_{c'},\U_{c})/\hc_{\N^\Diamond}(\U_{c'},\U_{c})$
the equation $\Wh^\m_\m K=0$ implies $K=0$, by the first equivalence
of Corollary \ref{fin}. The faithfulness of $\Wh^\m_\m$
is now a consequence of the
following general `abstract nonsense' result:
{\em Let $F: {\scr C} \to {\scr C}'$ be an exact functor between
abelian categories such that $F(M)=0$ implies $M=0$.
Then $F$ is faithful.}
\end{proof}

\subsection{Some applications} \label{rem2}
Given an algebra $B$ and a left, resp. right, $B$-module $N$,
let $\ann_BN$ denote the annihilator of $N$, a two-sided ideal in $B$.

Corollary \ref{fin} easily implies the main result of
Matumoto, \cite{Ma}. Matumoto's result says that, for any  finitely generated right $\U_c$-module
$M$, one has
\beq{mat}\Wh_\m M\neq0\quad\Longrightarrow\quad
\O\sset \var(\U_c/\ann_{\U_c} M).
\eeq

To see this, put $I:=\ann_{\U_c} M$. Since $M$ is finitely generated, one can find an integer
$n\geq1$ and an  $\U_c$-module
surjection
$(\U_c/I)^{\oplus n}\onto M$. The functor of coinvariants being right
exact, we get  a surjection 
$\Wh_\m(\U_c/I)^{\oplus n}\onto\Wh_\m M$. Hence, 
$\Wh_\m M\neq0$ implies $\Wh_\m(\U_c/I)\neq0.$
We conclude that $\Wh_\m^\m(\U_c/I)\neq0,$ by Proposition \ref{skr}(i).
Now, \eqref{mat} follows from the first equivalence
of 
Corollary \ref{fin} applied to $K=\U_c/I$.\qed
\smallskip

As a second application of our technique, we provide a new proof of a
result (Theorem \ref{prim1}
below) conjectured by Premet \cite[Conjecture 3.2]{premet2}.
In the special case of rational central characters,
the conjecture was first proved (using
characteristic $p$ methods) by Premet in \cite{premet3}, and
shortly afterwards  by Losev \cite{L} in full generality.
An alternate proof of the general case  was later obtained in \cite{premet4}.
 Our approach is totally different
from those used by Losev and Premet.

\begin{Thm} \label{prim1}
For any primitive ideal $I\sset \U_{c}$ such that
$\var(\U_{c}/I)=\overline{\O}$, there exists a simple
finite dimensional $A_{c}$-module $N$ such that one has
$\ann_{\U_c}(Q_{c}\o_{A_{c}} N)=I$.
\end{Thm}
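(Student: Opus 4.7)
The plan is to apply the Whittaker functor $\Wh^\m_\m$ to the diagonal Harish-Chandra bimodule $K := \U_c/I$ and extract the desired simple $A_c$-module from a composition factor. First, since $\var K = \overline{\O}$, Corollary~\ref{fin} yields that $\Wh^\m_\m K$ is both nonzero and finite-dimensional. Moreover, the Whittaker reduction $V := \Wh_\m K = \U_c/(I+\U_c\m_\chi)$ lies in $\catu$, and its image under the Skryabin equivalence $\Wh^\m$ is precisely $\Wh^\m_\m K$; hence $V$ has finite length in $\catu$, with simple composition factors of the form $V_i = Q_c\otimes_{A_c} N_i$ for certain simple finite-dimensional $A_c$-modules $N_i$.

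The heart of the argument is the faithfulness assertion $\ann_{\U_c} V = I$. To prove it, set $J := \ann_{\U_c}V\supset I$. Since $V$ is cyclic, generated by the image of $1$, one has $J\subset I+\U_c\m_\chi$; combined with $I\subset J$ this yields $J+\U_c\m_\chi = I+\U_c\m_\chi$. Applying the exact functor $\Wh_\m$ to the short exact sequence $0\to J/I\to K\to \U_c/J\to 0$ then shows the induced map $\Wh_\m K\to \Wh_\m(\U_c/J)$ is an isomorphism, forcing $\Wh_\m(J/I)=0$ and hence $\Wh^\m_\m(J/I)=0$. By the first equivalence in Corollary~\ref{fin}, $\O\not\subset \var(J/I)$, so $\var(J/I)\subset \overline{\O}\sminus \O$ and thus $\dim\var(J/I)<\dim \O$. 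Assume for contradiction that $J/I\neq 0$. Since $I$ is primitive, hence prime, $\U_c/I$ is a prime noetherian ring, so by Goldie's theorem the nonzero ideal $J/I$, being essential as a left ideal of $\U_c/I$, contains a regular element; this gives an embedding of left $\U_c$-modules $\U_c/I\hookrightarrow J/I$, and so $\dim\var(J/I)\geq \dim\var(\U_c/I)=\dim\O$, contradicting the bound above. Therefore $J=I$.

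With the faithfulness lemma in hand, the proof concludes formally. Choose a composition series for $V$ with factors $V_{i_1},\ldots, V_{i_k}$ and set $I_i:=\ann_{\U_c}V_i$. Then $I_{i_1}\cdots I_{i_k}$ annihilates $V$, so $I_{i_1}\cdots I_{i_k}\subset \ann_{\U_c} V=I$. Primeness of $I$ gives $I_{i_j}\subset I$ for some $j$; conversely, $V_{i_j}$ is a subquotient of $V$, so $I=\ann V\subset \ann V_{i_j}=I_{i_j}$. Hence $I=I_{i_j}$, and the simple finite-dimensional $A_c$-module $N:=N_{i_j}$ satisfies $\ann_{\U_c}(Q_c\otimes_{A_c}N)=I$.

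The main obstacle is the faithfulness lemma, and the substance of it is a clash between two bounds on $\dim\var(J/I)$: an upper bound $\dim\var(J/I)<\dim\O$ coming from the Harish-Chandra bimodule formalism (Corollary~\ref{fin} together with the geometry of $\N^\Diamond$), and a lower bound $\dim\var(J/I)\geq \dim\O$ coming from pure ring theory (primeness of $\U_c/I$ combined with Goldie's theorem). This tension between a geometric ``smallness'' statement and an algebraic ``largeness'' statement is what forces $J=I$, and it plays the role filled by reduction modulo $p$ in Premet's original proof and by deformation-quantization methods in Losev's.
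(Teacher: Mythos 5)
Your proof is correct, and while it reaches the same destination as the paper via the same two "pillars" (Corollary~\ref{fin} plus a prime-ring dimension argument), the route through them is genuinely different. The paper picks a simple $A_c$-submodule $N\sset\Wh^\m_\m(\U_c/I)$ at the outset, sets $J:=\ann_{\U_c}(Q_c\o_{A_c}N)$, shows directly that $I\sset J$ and that $\{\chi\}\sset\var(Q_c\o N)$ forces $\O\sset\var(\U_c/J)$, hence $\dim\var(\U_c/I)=\dim\var(\U_c/J)$, and then quotes Borho--Kraft (Korollar~3.6 of \cite{BoK}) to conclude $I=J$. You instead first establish a clean intermediate statement -- the faithfulness lemma $\ann_{\U_c}\Wh_\m(\U_c/I)=I$ -- by exploiting the exactness of $\Wh_\m$ on Harish-Chandra bimodules (to force $\Wh^\m_\m(J/I)=0$, hence $\dim\var(J/I)<\dim\O$) together with the Goldie-theoretic embedding $\U_c/I\hookrightarrow J/I$ (to force $\dim\var(J/I)\geq\dim\O$), and only afterward extract $N$ from a composition series of $\Wh_\m(\U_c/I)$ using primeness of $I$. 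Your approach is slightly longer and requires you to unwind Borho--Kraft's result rather than cite it, but it yields the sharper intermediate statement $\ann\Wh_\m(\U_c/I)=I$ and makes the Goldie mechanism -- which is also the engine inside the Borho--Kraft citation -- fully explicit. The paper's route is shorter because the choice of a simple submodule $N$ immediately produces a primitive $J$ to which Borho--Kraft applies, avoiding both the composition-series bookkeeping and the separate verification that $J\sset I+\U_c\mc$.

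One small caveat worth flagging: in the closing paragraph you describe the dimension clash as ``playing the role filled by reduction modulo $p$ in Premet's original proof and by deformation quantization in Losev's.'' That overstates the analogy. The substitute for those heavy tools is really the whole Whittaker-functor-plus-characteristic-variety machinery developed in the paper (Theorem~\ref{main_thm} and Corollary~\ref{fin}); the Goldie/Borho--Kraft step is a standard Noetherian-ring endgame that appears, in one guise or another, in every approach. Also, when you invoke Goldie's theorem to produce a left-module embedding $\U_c/I\hookrightarrow J/I$ and deduce $\dim\var(J/I)\geq\dim\O$, you are silently passing from the bimodule characteristic variety (used in Corollary~\ref{fin}) to the left-module one; this is harmless because for a Harish-Chandra bimodule with an $\ad\g$-stable good filtration the two agree via the diagonal embedding $\Delta\into\N\times\N$, but it deserves a sentence.
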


\begin{proof} Let 
$I\sset \U_{c}$ be  a  primitive ideal
such that $\var (\U_c/I)=\overline{\O}$.
Corollary \ref{fin} implies that $\Wh^\m_\m  (\U_c/I)$ is
a finite dimensional $(A_{c},A_{c})$-bimodule.

Let $N$ be a simple left $A_{c}$-submodule of 
 $\Wh^\m_\m  (\U_c/I)$, the latter being viewed
as a {\em left} $A_{c}$-module. 
By Skryabin's theorem
\ref{skr}, we have a diagram
of $\U_c$-modules
$$Q_{c}\o_{A_{c}} N\into
Q_{c}\o_{A_{c}}\Wh^\m_\m  (\U_c/I)=\Wh_\m(\U_c/I)=\U_c/(I+\U_c\mc)
\,\twoheadleftarrow\, \U_c/I.
$$

Let $J=\ann_{\U_c}(Q_{c}\o_{A_{c}} N).$
From the above diagram, we get
$J\supset \ann_{\U_c}\U_c/(I+\U_c\mc)\supset I$.
Moreover, $\var(\U_c/I)$ is an $\Ad G$-stable subset of $\N$,
and we have
$\{\chi\}\sset\var (Q_{c}\o_{A_{c}}N)\sset \var (\U_c/J)$.
Hence, $\O\sset \var(\U_c/J)$ and we get
$\dim \var (\U_c/I)=\dim\O\leq \dim\var(\U_c/J)$.
Since, $I\sset J$,
we conclude that $\dim \var (\U_c/I)=\dim\var(\U_c/J)$.

Now,  $N$ being a simple finite dimensional 
left $A_{c}$-module, we deduce using Skryabin's equivalence that 
$Q_{c}\o_{A_{c}} N$ is a simple $\U_c$-module. Thus,
 $J$  is a primitive ideal in $\U_c$.
The equation $\dim \var (\U_c/I)=\dim\var(\U_c/J)$ combined with the inclusion
$I\sset J$ forces
$I=J,$ due to
a result by Borho-Kraft \cite{BoK}, Korollar 3.6.
\end{proof}

\begin{rem} Set $n:=\dim\sn$. 
The assignment $N\mto \Ext^n_{\Lmod{A_c}}(N, A_c)$ is an
{\em exact} functor on the category
of finite dimensional left $A_c$-modules, thanks to Corollary \ref{gorenst}.
That functor gives a contravariant duality
$\WHC_{\{\chi\}}(A_c,A_c)\iso\WHC_{\{\chi\}}(A_c,A_c)^\op.$
\end{rem}

\section{$\scr D$-modules}
\subsection{Whittaker $\scr D$-modules}\label{Dmod}
Let $\h$ denote the abstract Cartan algebra for the Lie aldgebra $\g$,
and let $\xx\sset\h^*$ be the semigroup of dominant integral weights.

For any integral weight $\la\in\h^*$ one has a $G$-equivariant line
bundle $\oo(\la)$ on $\B$. For any $\nu\in\H^*$,
let $\dd_\nu$ denote the sheaf of $\nu$-{\em twisted}
algebraic differential operators on $\B$, see \cite{BB}.
In the case where $\nu$ is integral, we have $\dd_\nu=\dd(\oo(\nu))$,
is the sheaf of differential operators acting in the sections of the
 line bundle $\oo(\nu)$.
 There is a canonical algebra isomorphism $\U_\nu\cong\Gamma(\B,\dd_\nu)
$, see \cite{BB}.
 
Let  $\mathcal{V}$ be  a coherent 
$\dd_\nu$-module on $\mathcal{B}$
that has, viewed 
as  a quasi-coherent $\oo_\B$-module, an additional $M$-equivariant structure.
Given an element  $x\in \m$, we write 
$x_{_\dd}$ for the action on $\mathcal{V}$
of the vector field corresponding
to $x$ via the $\dd_\nu$-module structure,
and $x_{_M}$ for the action on $\mathcal{V}$
obtained by differentiating the $M$-action
arising from the equivariant structure.

We say that   an
 $M$-equivariant
$\dd_\nu$-module  $\mathcal{V}$ 
is an $(\m,\chi)$-Whittaker $\dd_\nu$-module 
 if, for any $x\in \m$ and $v\in \mathcal{V}$, we have
$(x_{_\dd}-x_{_M})v=\chi(x)\cdot v$.
Write $\cat$ for the abelian category of  $(\m,\chi)$-Whittaker 
coherent  $\dd_\nu$-modules on $\B$.

We put  $\qq_\nu:=\dd_\nu/\dd_\nu\cdot\mc$.
This 
$\dd_\nu$-module  is clearly
an object of $\cat$.
For any $\dd_\nu$-module   $\mathcal{V}$ from the definitions one finds
$\Hom_{\dd_\nu}(\qq_\nu,  \mathcal{V})=\Wh^\m(\Ga(\B,   \mathcal{V}))$.
This space has a right $A_\nu$-module structure.
In particular, taking $\mathcal{V}=\qq_\nu$,
one obtains an algebra homomorphism
$\AA_\nu^{\op}\to\Hom_{\dd_\nu}(\qq_\nu,
\qq_\nu)$
that makes $\qq_\nu$ a $(\dd_\nu, \AA_\nu)$-bimodule.

\begin{rem} The natural $\dd$-module projection
$\dd_\nu\onto\qq_\nu$ induces
an $\U_\nu$-module
map $Q_\nu=\U_\nu/\U_\nu\mc\to\Ga(\B, \qq_\nu)$.
The latter map turns out to be an isomorphism,
according to a special case ($\la=0$) of
Corollary \ref{Q}(ii). If $\nu$ is a dominant
weight then the same isomorphism follows
from the Beilinson-Bernstein theorem \cite{BB}.
In any case, one concludes that 
the canonical algebra map $\AA_\nu^{\op}\to\Hom_{\dd_\nu}(\qq_\nu,
\qq_\nu)$
is an isomorphism as well.
\end{rem}

For any regular and dominant $\nu\in\h^*$, the 
localization theorem of Beilinson and Bernstein \cite{BB} yields a category
equivalence $\Gamma(\B,-): \cat\iso{(\U_\nu,\mc)\mbox{-}\operatorname{mod}}$.
Combining the Beilinson-Bernstein and
the Skryabin equivalences, 
one obtains, see \cite[Proposition 6.2]{GG}, the 
following result

\begin{Prop}\label{BBprop} For  any regular and dominant $\nu\in\h^*$,
the following functors provide mutually inverse  equivalences,
 of abelian categories,
$$
\xymatrix{
\cat\
\ar@<0.5ex>[rrrr]^<>(0.5){\Hom_{\dd_\nu}(\qq_\nu,-)}&&&&
\ \modu{\AA_\nu}\ar@<0.5ex>[llll]^<>(0.5){\qq_\nu\o_{\AA_\nu}(-)}
}\eqno\Box.$$
\end{Prop}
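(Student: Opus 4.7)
The plan is to realize the pair of functors in the statement as the composite of two well-known equivalences: the Beilinson-Bernstein localization theorem at the twist $\nu$, and Skryabin's equivalence (Proposition \ref{skr}). This is essentially the argument of \cite[Proposition 6.2]{GG}.

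First, I would verify that the Beilinson-Bernstein equivalence $\Gamma(\B,-)$, which for regular dominant $\nu$ induces an equivalence between coherent $\dd_\nu$-modules and $\modu{\U_\nu}$ with quasi-inverse $\dd_\nu\otimes_{\U_\nu}(-)$, restricts to an equivalence between $\cat$ and $\catu$ (with $c=\nu$). For $\mathcal{V}\in\cat$, the defining relation $x_{_\dd}-x_{_M}=\chi(x)$ for $x\in\m$ shows that the $\m_\chi$-action on $\Gamma(\B,\mathcal{V})$ coincides with the differential of the $M$-equivariant structure; since $M$ is a connected unipotent group, this action is automatically locally nilpotent, placing $\Gamma(\B,\mathcal{V})$ in $\catu$. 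Conversely, any object of $\catu$, with its locally nilpotent $\m_\chi$-action, exponentiates on the localization to an $M$-equivariant structure satisfying the Whittaker condition.

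Second, Proposition \ref{skr} provides a quasi-inverse pair $\Wh^\m\colon\catu\iso\modu{A_\nu}$ and $Q_\nu\otimes_{A_\nu}(-)$. Composing the two equivalences yields an equivalence $\cat\iso\modu{A_\nu}$; it remains to identify the composite functors with those in the statement. The identification $\qq_\nu=\dd_\nu/\dd_\nu\m_\chi=\dd_\nu\otimes_{\U_\nu}Q_\nu$, which is immediate from right-exactness of localization, combined with the adjunction between $\dd_\nu\otimes_{\U_\nu}(-)$ and $\Gamma(\B,-)$, gives
\[
\Wh^\m\bigl(\Gamma(\B,\mathcal{V})\bigr)=\Hom_{\U_\nu}\bigl(Q_\nu,\Gamma(\B,\mathcal{V})\bigr)=\Hom_{\dd_\nu}(\qq_\nu,\mathcal{V}),
\]
for $\mathcal{V}\in\cat$. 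In the opposite direction, localization commutes with tensor products over $A_\nu$, so the image of $N\in\modu{A_\nu}$ is $\dd_\nu\otimes_{\U_\nu}(Q_\nu\otimes_{A_\nu}N)=\qq_\nu\otimes_{A_\nu}N$.

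The only nontrivial point is the restriction statement in the first step: verifying that Beilinson-Bernstein respects the Whittaker condition on both sides. This reduces to the standard fact that for $M$ connected unipotent, integrability of a $\U\m$-action to an $M$-action is equivalent to local nilpotency of the corresponding $\m_\chi$-action; everything else is formal bookkeeping on top of the localization theorem and Skryabin's equivalence.
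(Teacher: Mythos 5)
Your argument is correct and is exactly the paper's: the proposition is stated as a consequence of composing the Beilinson--Bernstein localization equivalence $\Gamma(\B,-)\colon\cat\iso\catu$ with Skryabin's equivalence (Proposition~\ref{skr}), with a citation to \cite[Proposition 6.2]{GG}. Your write-up fills in the identification of the composite functors with $\Hom_{\dd_\nu}(\qq_\nu,-)$ and $\qq_\nu\otimes_{A_\nu}(-)$, which is the same bookkeeping carried out in that reference.
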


For any Borel subalgebra $\b$, let $\C_\b$ denote the
corresponding
1-dimensional $\oo_\B$-module, a skyscraper sheaf at the
point $\{\b\}\sset\B$.
Recall diagram \eqref{diagram}. The following result is standard
\begin{Lem}\label{vanish}
For any $\b\in\B\sminus \ps(\si)$ and any $\cv\in\cat,$ we have
$\,\C_\b\stackrel{L}\o_{\oo_\B}\cv=0$.
\end{Lem}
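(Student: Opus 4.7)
The plan is to reduce the derived vanishing $\C_\b\otimes^L_{\oo_\B}\cv=0$ to the ordinary vanishing of the fiber $\cv|_\b := \cv\otimes_{\oo_\B}\C_\b$, via a Nakayama argument on the formal completion at $\b$, and to deduce the fiber vanishing from the Whittaker relation together with the unipotence of the stabilizer $M\cap B$.

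By Proposition \ref{twisted}(i), the hypothesis $\b\notin\ps(\si)$ supplies some $x\in\m\cap\b$ with $\chi(x)=1$. Every element of $\m$ is nilpotent in $\g$ (since $\m$ lies in the sum of negative $\ad h$-weight spaces), and any nilpotent element of $\b$ must lie in $[\b,\b]$ (as $\b/[\b,\b]\cong\h$ has no nonzero nilpotents); hence $x\in[\b,\b]$. Locally near $\b$, the operator $x_\dd$ decomposes as a first-order differential operator whose principal symbol is the vector field on $\B$ generated by $x$, which vanishes at $\b$ because $x\in\b$, plus a zeroth-order twist whose value at $\b$ is the scalar determined by the character of $\b$ associated with $\nu$, and this scalar vanishes on $[\b,\b]$. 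Consequently $x_\dd$ preserves the maximal-ideal submodule of $\cv$ at $\b$ and descends to zero on $\cv|_\b$. Substituting into the Whittaker relation $x_\dd-x_M=\chi(x)=1$ then forces $\overline{x_M}=-1$ on $\cv|_\b$. On the other hand, the $M$-equivariance of $\cv$ endows $\cv|_\b$ with an algebraic action of the connected unipotent stabilizer $M\cap B$, so $\overline{x_M}$ is locally nilpotent; since $\overline{x_M}^k\overline v=(-1)^k\overline v\neq 0$ for any nonzero $\overline v$ and any $k$, I conclude $\cv|_\b=0$.

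To promote this fiber vanishing to derived vanishing, let $I_\b\subset\oo_{\B,\b}$ denote the maximal ideal and let $\wh\cv$ be the $I_\b$-adic completion of the stalk $\cv_{(\b)}$. By construction $\wh\cv$ is $I_\b$-adically separated, and $\wh\cv/I_\b\wh\cv=\cv|_\b=0$; iterating Nakayama then yields $\wh\cv=\bigcap_k I_\b^k\wh\cv=0$. Since $\wh\oo_{\B,\b}$ is faithfully flat over $\oo_{\B,\b}$, flat base change gives $\Tor_i^{\oo_\B}(\C_\b,\cv)=0$ for every $i$, which is the claim. The main delicate point is the local nilpotence of $\overline{x_M}$ on $\cv|_\b$: because $\cv$ is $\dd$-coherent rather than $\oo$-coherent, its fiber may be infinite-dimensional, and one must invoke that the algebraic $M$-equivariance induces an algebraic action of $M\cap B$ on $\cv|_\b$ whose polynomial orbits in a connected unipotent group are automatically finite-dimensional and unipotent. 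All remaining steps are routine bookkeeping with the local structure of $\dd_\nu$ and standard completion and faithful flatness arguments.
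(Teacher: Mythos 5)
The heart of your argument (Step 1) is sound in spirit: you locate $x\in\m\cap\b$ with $\chi(x)\neq 0$ via Proposition \ref{twisted}(i), observe $x\in[\b,\b]$, and contrast the Whittaker relation with a nilpotence coming from $M\cap B$-unipotence to kill the zeroth fiber $\cv\otimes_{\oo_\B}\C_\b$. This is genuinely close to what the paper does, though the paper organizes the contradiction differently (trivial Lie algebra action on its own homology, rather than unipotence of the stabilizer).

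However, Step 2 --- the passage from $\cv\otimes_{\oo_\B}\C_\b=0$ to $\C_\b\otimes^L_{\oo_\B}\cv=0$ via Nakayama on the $I_\b$-adic completion --- does not work. For coherent $\dd$-modules the implication ``fiber vanishes $\Rightarrow$ derived fiber vanishes'' is simply false. The standard counterexample is $\cv=\delta_0=\C[\partial]$ on $\mathbb{A}^1$: one has $x\cdot\delta_0=\delta_0$, so $\cv/x\cv=0$ and the $x$-adic completion $\varprojlim\cv/x^k\cv$ vanishes, yet $\Tor_1^{\oo}(\C_0,\cv)=\ker(x\colon\cv\to\cv)=\C\cdot\delta\neq 0$. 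Your Nakayama/completion argument is exactly what fails here: for a module that is not finitely generated over $\oo_{\B,\b}$, the completion $\varprojlim\cv_{(\b)}/I_\b^k\cv_{(\b)}$ is not the base change $\cv_{(\b)}\otimes_{\oo_{\B,\b}}\wh\oo_{\B,\b}$, so faithful flatness of $\wh\oo_{\B,\b}$ gives you no purchase on the higher $\Tor$'s, and the vanishing of the inverse limit carries no information about them. You have proved $\Tor_0=0$ but not $\Tor_i=0$ for $i>0$.

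The paper closes exactly this gap by a different mechanism. It invokes the Beilinson--Bernstein identification $\Tor_\bullet^{\oo_\B}(\C_\b,\cv)\cong H_\bullet(\n,V)_o$ with $V=\Gamma(\B,\cv)$ and $\n=[\b,\b]$. This trades the $\oo_\B$-linear $\Tor$, on which $x_\dd$ does not act, for Lie algebra homology $H_\bullet(\n,V)$, on which $x\in\m\cap\n$ does act: on one hand $x-\chi(x)$ is locally nilpotent by the Whittaker condition, and on the other hand the $\n$-action on its own homology is trivial. This forces $\chi(x)=0$ in \emph{every} homological degree simultaneously, so the contradiction with $\chi|_{\m\cap\b}\neq 0$ kills all of $H_\bullet(\n,V)_o$ at once. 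Your unipotence-of-$M\cap B$ argument could conceivably be run degree by degree on a Koszul resolution, but the obstruction is the same: $x_\dd$ is a first-order operator and does not descend to the terms of the Koszul complex at $\b$, so you cannot play it off against $x_M$ there. Some translation to a complex where the $\g$- or $\n$-action is globally defined (as Beilinson--Bernstein provides) seems unavoidable.
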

\begin{proof} Fix $\b\in\B$ and write
$\n:=[\b,\b]$ for the nil-radical  of $\b$.
Given a $\dd_\nu$-module $\cv$, let $V:=\Gamma(\B,\cv)$
be the corresponding $\U_\nu$-module. The
Beilinson-Bernstein theory yields, in particular,
a canonical isomorphism
$$\Tor_\idot^{\oo_\B}(\C_\b,\,\cv)=H_\idot(\n,V)_o,$$
where $H_\idot(\n,-)$ denotes the Lie algebra homology functor,
and the subscript `$o$' stands for a certain
particular weight space of the natural Cartan subalgebra action
on homology.

Assume now that $\cv\in\cat$. Then,
$V\in\catu$. Therefore, for any element
$x\in \m\cap\n$, the natural action of $x$
on $H_\idot(\n,V)$ is such that the operator
$x-\chi(x)$ is nilpotent. On the other hand,
the action of any Lie algebra on its homology
is trivial, hence, the element
$x$ induces the zero operator on $H_\idot(\n,V)$.
Thus, we must have $\chi(x)=0$. Proposition \ref{twisted}(i) completes
the proof.
\end{proof}

\subsection{Translation bimodules} \label{trans}
Let $\oo_{T^*\B}(\la)$ be the pull-back of the line bundle $\oo(\la)$,
on $\B$, via the cotangent bundle projection
$T^*\B\onto\B$ and let  $\oo_{\ss}(\la)$ denote
the restriction of  the line bundle $\oo_{T^*\B}(\la)$
to the Slodowy variety $\ss\sset T^*\B$.
 The sheaf  $\oo_{\ss}(\la)$ is equivariant with respect
to the $\bullet$-action on $\ss$. This gives the
space $\Ga(\ss,\ \oo_{\ss}(\la))$ a $\Z$-grading which is
 bounded from below since $\Ga(\ss,\ \oo_{\ss}(\la))$
is a finitely generated $\C[\sn]$-module.

For any $\nu\in\h^*$ and an integral weight $\la$, we put
$\dd_{\nu}^{\nu+\la}:=\oo(\la)\o_{\oo_\B}\dd_\nu$,
resp. $\qq_\nu^{\nu+\la}:=\oo(\la)\o_{\oo_\B}\qq_\nu$.
Let $U_{\nu}^{\nu+\la}:=\Ga(\B,\ \dd_{\nu}^{\nu+\la})$,
resp. $Q_\nu^{\nu+\la}:=\Ga(\B,\ \qq_\nu^{\nu+\la})$.
Further, we define 
$$\AA_\nu^{\nu+\la}=(Q_\nu^{\nu+\la})^{\ad\m}=
\Wh^\m(Q_\nu^{\nu+\la})=\Wh^\m\big(\Ga(\B,\ \oo(\la)\o_{\oo_\B}\qq_\nu)\big).$$
The space $\AA_\nu^{\nu+\la}$ comes equipped with a natural $(\AA_{\la+\nu},
\AA_\nu)$-bimodule
structure.

The standard filtration on
$\dd_\nu$ by the order of differential operator
gives rise to a tensor product filtration
on $\oo(\la)\o_{\oo_\B}\dd_\nu$, where the factor
$\oo(\la)$ is assigned filtration degree 0.
 This induces
a natural ascending filtration on the vector space $Q_\nu^{\nu+\la}$,
resp. $\AA_\nu^{\nu+\la}$.
Further, the $\bullet$-action on $\B$ gives 
a $\Z$-grading 
on the above vector spaces. Hence, formula
\eqref{FK} provides the vector space $Q_\nu^{\nu+\la}$,
resp. $\AA_\nu^{\nu+\la}$,  with
a natural Kazhdan filtration $F_\kk Q_\nu^{\nu+\la}$,
resp. $F_\kk\AA_\nu^{\nu+\la}$.

Main propeties of the bimodules $\AA_\nu^{\nu+\la}$ are summarized in
the following result.

\begin{Prop}\label{equiv}  Let $\nu$  be dominant   regular
weights.
Then, for any  $\la,\mu\in\xx$,
we have 

\vi $\AA_\nu^{\nu+\la}$  is  finitely generated and projective as a
left $\AA_{\nu+\la}$-module, as well as a right $\AA_\nu$-module;

\vii The natural pairing
$\AA^{\nu+\la+\mu}_{\nu+\la}\ \o_{\AA_{\nu+\la}}\
\AA_\nu^{\nu+\la}\to \AA_\nu^{\nu+\la+\mu}$
is an isomorphism;

\viii If $\la$ is  sufficiently dominant, then there is  a canonical graded space isomorphism
\beq{grH}
\gr_\kk \AA_\nu^{\nu+\la}=
\Ga(\ss,\ \oo_{\ss}(\la)),
\eeq which is compatible with the pairings in (ii).

\iv The following {\em translation functor}  is an equivalence
$$\T^{\nu+\la}_\nu:\
\modu{\AA_\nu}\to\modu{\AA_{\nu+\la}},\quad
M\mto \AA^{\nu+\la}_\nu\o_{\AA_\nu} M.$$
\end{Prop}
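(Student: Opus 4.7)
The plan is to establish (iv) first by producing an abstract equivalence via known equivalences, then to deduce (i) and (ii) by Morita-theoretic formalities, and finally to tackle the associated-graded computation (iii) separately. For (iv), the key observation is that since $\la$ is integral, the twisting functor $\oo(\la)\otimes_{\oo_\B}-$ defines an equivalence $\cat\iso\catl$ with inverse $\oo(-\la)\otimes_{\oo_\B}-$; the Whittaker condition is preserved since the $M$-equivariant structure passes through tensoring with an $\oo_\B$-module. Because both $\nu$ and $\nu+\la$ are regular dominant (as $\la\in\xx$), sandwiching this equivalence between the two instances of Proposition~\ref{BBprop} yields an abstract equivalence $\modu{\AA_\nu}\iso\modu{\AA_{\nu+\la}}$. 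I would then identify this equivalence with $\AA_\nu^{\nu+\la}\otimes_{\AA_\nu}-$ by direct calculation: for a finitely presented $\AA_\nu$-module $M$, the functor $\Hom_{\dd_{\nu+\la}}(\qq_{\nu+\la},-)$ commutes with $(-)\otimes_{\AA_\nu}M$, and the resulting bimodule of intertwiners is exactly $\Hom_{\dd_{\nu+\la}}(\qq_{\nu+\la},\qq_\nu^{\nu+\la})=\AA_\nu^{\nu+\la}$.

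Once (iv) is in hand, parts (i) and (ii) follow formally. For (i), standard Morita theory says that any bimodule $P$ for which $P\otimes_R-$ is an equivalence of module categories is automatically finitely generated projective on both sides; the quasi-inverse is realized by $\AA_{\nu+\la}^\nu$. For (ii), the composition $\T^{\nu+\la+\mu}_{\nu+\la}\ccirc\T^{\nu+\la}_\nu$ is implemented by tensor product of the respective bimodules and coincides with $\T^{\nu+\la+\mu}_\nu$ because $\oo(\la+\mu)=\oo(\la)\otimes\oo(\mu)$; since both sides represent the same equivalence, the natural pairing $\AA^{\nu+\la+\mu}_{\nu+\la}\otimes_{\AA_{\nu+\la}}\AA_\nu^{\nu+\la}\to\AA_\nu^{\nu+\la+\mu}$ is an isomorphism.

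Part (iii) is the technical heart. Applying the analogue of Proposition~\ref{skr}(iii) to $Q_\nu^{\nu+\la}$ equipped with the Kazhdan filtration from \S\ref{trans}, one obtains $\gr_\kk\AA_\nu^{\nu+\la}\cong(\gr_\kk Q_\nu^{\nu+\la})^M$. The plan is then to identify $\gr_\kk Q_\nu^{\nu+\la}$ with $\Ga(\si,\oo_\si(\la))$, where $\oo_\si(\la):=\oo_{T^*\B}(\la)|_\si$, via two steps: first, the sheaf-level computation that $\gr\qq_\nu^{\nu+\la}\cong\oo_\si(\la)$ (the symbols of $\m_\chi$ cut out $\si=\pi\inv(\chi+\m^\perp)$ reduced, using Corollary~\ref{flat}(iii) applied to the Springer map $\pi:T^*\B\to\g^*$, exactly paralleling the Premet-style isomorphism $\gr_\kk Q_c\cong\C[\N\cap(\chi+\m^\perp)]$ of Proposition~\ref{pp1}); and second, the cohomology interchange $\gr_\kk\Ga(\B,\qq_\nu^{\nu+\la})\cong\Ga(\B,\gr\qq_\nu^{\nu+\la})=\Ga(\si,\oo_\si(\la))$. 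Taking $M$-invariants and applying the isomorphism $\si\cong M\times\ss$ from Proposition~\ref{red_si}(ii) then yields $\Ga(\ss,\oo_\ss(\la))$; compatibility with the pairing of (ii) is immediate from the graded-algebra structure on $\bplus_\la\Ga(\ss,\oo_\ss(\la))$ coming from tensor product of sections.

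The principal obstacle is the cohomology-interchange step: one needs $H^{>0}(\B,\gr\qq_\nu^{\nu+\la})=H^{>0}(\si,\oo_\si(\la))=0$ for $\la$ sufficiently dominant. This is the precise content of the dominance hypothesis and is the analogue of Kempf/Serre vanishing: since $\pi:\ss\to\sn$ is a projective morphism with $\sn$ affine and the restriction of $\oo(\la)$ to the fibres is relatively ample for $\la$ sufficiently dominant, one gets $H^{>0}(\ss,\oo_\ss(\la))=0$, and the identification $\si\cong M\times\ss$ with $M$ affine preserves this vanishing. The precise threshold for dominance is dictated by this vanishing requirement.
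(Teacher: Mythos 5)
Your proposal follows essentially the same logical architecture as the paper's: establish (iv) by sandwiching the tensoring-by-$\oo(\la)$ equivalence between the Beilinson--Bernstein/Skryabin equivalences of Proposition~\ref{BBprop}, deduce (i)--(ii) by Morita formalities and the sheaf isomorphism $\oo(\la)\o\oo(\mu)\cong\oo(\la+\mu)$, and obtain (iii) by a cohomology-interchange and relative-ampleness argument in which the ``sufficiently dominant'' hypothesis is exactly what ensures vanishing of higher direct images along $\pi:\ss\to\sn$. This matches the paper's Lemma~\ref{trans2} together with the proof of (iii) given in \S\ref{Dbimod} via Lemma~\ref{filt}.

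The one place where you gloss over the genuine work is the identification step in (iv): showing that the abstract equivalence obtained by sandwiching actually coincides with $\AA_\nu^{\nu+\la}\o_{\AA_\nu}(-)$. You assert that $\Hom_{\dd_{\nu+\la}}(\qq_{\nu+\la},-)$ commutes with $(-)\o_{\AA_\nu}M$, but this is precisely what needs proof, and the paper establishes it through the isomorphism $A^{\nu+\la}_\nu = \Wh^\m_\m\Ga(\B,\dd^{\nu+\la}_\nu)$ (formula~\eqref{Qshift}), which in turn rests on Corollary~\ref{Q}(ii) (vanishing of $H^{>0}(\B,\qq_\nu^{\nu+\la})$ via a Chevalley--Eilenberg resolution) together with the monoidality of $\Wh^\m_\m$ from Theorem~\ref{main_thm}(ii). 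In other words, the commutativity of the square in Lemma~\ref{trans2} is not a formality: it hinges on the homology-vanishing machinery of \S\ref{vanish_sec} applied to $K=\Ga(\B,\dd_\nu^{\nu+\la})$. For (iii) your decomposition into (a) the sheaf-level isomorphism $\tgr\qq_\nu^{\nu+\la}\cong\oo_\si(\la)$ and (b) a $\Ga(\B,-)$ interchange, followed by taking $M$-invariants, is a slight reorganization of the paper's Lemma~\ref{filt}, which instead forms the spectral sequence for the composite $\Wh^\m\Ga(\B,-)$ directly; the two are equivalent since $M$ is unipotent and acts freely on $\si$, so taking $M$-invariants introduces no higher cohomology. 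Both approaches need step (a), which is Corollary~\ref{Q}(i) and again requires the Holland-type argument (Lemma~\ref{holland} via Proposition~\ref{dmc}), not merely an elementary symbol computation.
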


\begin{rem}
Applying $\gr(-)$
to the pairing of part (ii) of the Proposition,
one obtaines a pairing
$$\gr_\kk \AA^{\nu+\la+\mu}_{\nu+\la}\ \o_{\gr_\kk \AA_{\nu+\la}}\
\gr_\kk \AA_\nu^{\nu+\la}\to \gr_\kk \AA_\nu^{\nu+\la+\mu}.$$
The last statement of Proposition \ref{equiv}(iii) means that the latter
pairing 
gets identified, via the the  isomorphisms in \eqref{grH},
with the natural pairing 
$$\Ga(\ss, \oo_\ss(\la))\,\o\, \Ga(\ss, \oo_\ss(\mu))\to \Ga(\ss, \oo_\ss(\la+\mu)),$$
induced by the sheaf morphism $\oo_\ss(\la)\o\oo_\ss(\mu)\to
\oo_\ss(\la+\mu).$
\end{rem}

We  begin the proof of Proposition \ref{equiv} with
the following result that relates `geometric' and
`algebraic' translation functors.

\begin{Lem}\label{trans2} For any  dominant   regular
weight $\nu$ and any $\la\in\xx$, the following diagram commutes
$$
\xymatrix{
{{(\dd_\nu,\mc)\mbox{-}\operatorname{mod}}}
\ar[d]_<>(0.5){\oo(\la)\otimes_{\oo_\B}(-)}
\ar[rrr]^<>(0.5){\Wh^\m\Ga(\B,-)}
&&&\modu{\AA_\nu}\ar[d]^<>(0.5){\T^{\nu+\la}_\nu}\\
{{(\dd_{\nu+\la},\mc)\mbox{-}\operatorname{mod}}}
\ar[rrr]^<>(0.5){\Wh^\m\Ga(\B,-)}
&&&\modu{\AA_{\nu+\la}}
}
$$
\end{Lem}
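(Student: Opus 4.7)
The plan is to reduce both circuits of the diagram to the same expression using Proposition \ref{BBprop} twice, once at weight $\nu$ and once at weight $\nu+\la$. Since $\nu$ is dominant regular and $\la\in\xx$, the weight $\nu+\la$ is again dominant regular, so Proposition \ref{BBprop} applies at both weights.

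Starting with an arbitrary $\cv\in\cat$, set $M:=\Wh^\m\Ga(\B,\cv)\in\modu{\AA_\nu}$. Proposition \ref{BBprop} applied at $\nu$ gives a canonical isomorphism $\cv\cong \qq_\nu\o_{\AA_\nu}M$ in $\cat$. Tensoring with $\oo(\la)$ over $\oo_\B$, and using that the tensor product is computed sheaf-wise, yields
\[
\oo(\la)\o_{\oo_\B}\cv\ \cong\ \qq_\nu^{\nu+\la}\o_{\AA_\nu}M.
\]
Here $\qq_\nu^{\nu+\la}=\oo(\la)\o_{\oo_\B}\qq_\nu$ is a $(\dd_{\nu+\la},\AA_\nu)$-bimodule, coherent as a $\dd_{\nu+\la}$-module since $\oo(\la)$ is locally free of finite rank, and $(\m,\chi)$-Whittaker because $\oo(\la)$ is $G$-equivariant (so that the identity $(x_\dd-x_M)(\omega\o q)=\omega\o\chi(x)q=\chi(x)(\omega\o q)$ persists for $x\in\m$). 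Thus $\qq_\nu^{\nu+\la}\in\catl$.

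The key step is then to apply Proposition \ref{BBprop} a \emph{second} time, now at weight $\nu+\la$, to the object $\qq_\nu^{\nu+\la}$. Since $\Wh^\m\Ga(\B,\qq_\nu^{\nu+\la})=\AA_\nu^{\nu+\la}$ by definition, this produces a canonical $(\dd_{\nu+\la},\AA_\nu)$-bimodule isomorphism
\[
\qq_\nu^{\nu+\la}\ \cong\ \qq_{\nu+\la}\o_{\AA_{\nu+\la}}\AA_\nu^{\nu+\la}.
\]
Substituting into the previous display gives $\oo(\la)\o_{\oo_\B}\cv\cong \qq_{\nu+\la}\o_{\AA_{\nu+\la}}(\AA_\nu^{\nu+\la}\o_{\AA_\nu}M)$. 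Finally, applying $\Wh^\m\Ga(\B,-)$ and using Proposition \ref{BBprop} at $\nu+\la$ in the opposite direction collapses this to $\AA_\nu^{\nu+\la}\o_{\AA_\nu}M=\T^{\nu+\la}_\nu(M)$, which is exactly the bottom-then-right composition applied to $\cv$. Naturality in $\cv$ is automatic since every isomorphism used is canonical.

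Since all the machinery is packaged inside Proposition \ref{BBprop} and the obvious identity $\oo(\la)\o_{\oo_\B}(\qq_\nu\o_{\AA_\nu}M)=\qq_\nu^{\nu+\la}\o_{\AA_\nu}M$, there is no real obstacle; the only point requiring verification is that $\qq_\nu^{\nu+\la}$ genuinely lies in $\catl$, i.e. that twisting by $\oo(\la)$ preserves both coherence and the $(\m,\chi)$-Whittaker structure. Both are routine thanks to $\oo(\la)$ being a $G$-equivariant line bundle.
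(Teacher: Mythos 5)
Your proof is correct, but it takes a genuinely different route from the one in the paper. You apply Proposition \ref{BBprop} twice --- first at $\nu$ to write $\cv\cong\qq_\nu\o_{\AA_\nu}M$, then at $\nu+\la$ to write $\qq_\nu^{\nu+\la}\cong\qq_{\nu+\la}\o_{\AA_{\nu+\la}}\AA_\nu^{\nu+\la}$ --- and then collapse everything with a third invocation of the same equivalence, staying entirely in the world of $\dd$-modules. The paper instead establishes formula \eqref{Qshift}, $A^{\nu+\la}_\nu=\Wh^\m_\m\Ga(\B,\dd_\nu^{\nu+\la})$, by invoking the cohomology-vanishing result Corollary \ref{Q}(ii), sets $K:=\Ga(\B,\dd_\nu^{\nu+\la})\in\hc(\U_{\nu+\la},\U_\nu)$, and then applies the monoidal property of the bimodule Whittaker functor from Theorem \ref{main_thm}(ii). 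Your argument is leaner: it bypasses both Theorem \ref{main_thm}(ii) and the Chevalley--Eilenberg machinery behind Corollary \ref{Q}, and needs only that $\nu+\la$ is again dominant regular (which you correctly note) and that twisting by a $G$-equivariant line bundle preserves the $(\m,\chi)$-Whittaker condition (which you verify). What the paper's route buys is the bimodule-theoretic formulation \eqref{Qshift}, which the author goes on to use in the proof of Proposition \ref{equiv}(iii) and throughout \S\ref{Dbimod}, so the overhead is not wasted in the larger picture; but as a self-contained proof of the lemma itself your argument is a clean alternative.
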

\begin{proof} First of all, we claim that the projection
$\dd^{\nu+\la}_\nu\onto\qq^{\nu+\la}_\nu$ induces an isomorphism
\beq{Qshift}
A^{\nu+\la}_\nu=\Wh^\m_\m\Ga(\B, \dd^{\nu+\la}_\nu).
\eeq

To prove this, we are going to use Corollary \ref{Q} from
section \ref{Dbimod} below, which is independent of the intervening
material. 
We have
$$A^{\nu+\la}_\nu=\Wh^\m Q^{\nu+\la}_\nu
=\Wh^\m\Ga(\B, \qq^{\nu+\la}_\nu)=
\Wh^\m\big[\Ga(\B, \dd^{\nu+\la}_\nu)/\Ga(\B,
\dd^{\nu+\la}_\nu)\mc\big],
$$
where the last equality holds thanks to part (ii) of  Corollary
\ref{Q}. The rightmost term in the above
chain of equalities is nothing but
$\Wh^\m_\m\Ga(\B, \dd^{\nu+\la}_\nu),$
and formula \eqref{Qshift} is proved.

Now, let $\cv\in {(\dd_\nu,\mc)\mbox{-}\operatorname{mod}}$.
One clearly has
$\oo(\la)\otimes_{\oo_\B}\cv=\dd^{\nu+\la}_\nu\o_{\dd_\nu}\cv$.
Therefore, applying the Beilinson-Bernstein equivalence,
we obtain 
\beq{abc}\Ga\big(\B,\ \oo(\la)\otimes_{\oo_\B}\cv\big)=
\Ga(\B, \dd^{\nu+\la}_\nu)\o_{\U_\nu}\Ga(\B, \cv).\eeq

We put $K:=\Ga(\B, \dd^{\nu+\la}_\nu)$ so that
\eqref{Qshift} reads
$A^{\nu+\la}_\nu=\Wh^\m_\m K$. By Theorem
\ref{main_thm}(ii),  we get
$$\Wh^\m\big(K\o_{\U_\nu}\Ga(\B, \cv)\big)=
\Wh^\m_\m K\o_{A_\nu}\Wh^\m\Ga(\B, \cv)=
A^{\nu+\la}_\nu\o_{A_\nu}\Wh^\m\Ga(\B, \cv).
$$

The leftmost term  in the last formula equals
$\Wh^\m\Ga\big(\B,\ \oo(\la)\otimes_{\oo_\B}\cv\big),$
by \eqref{abc}. Thus, we have proved that
$\Wh^\m\Ga\big(\B,\ \oo(\la)\otimes_{\oo_\B}\cv\big)=
\T^{\nu+\la}_\nu\big(\Ga(\B, \cv)\big),$ and the
lemma follows.
\end{proof}

\begin{proof}[Proof of Proposition \ref{equiv}]
For dominant and regular $\nu,\la$, 
the horizontal functors in each row of the diagram
of Lemma \ref{trans2} are
the equivalences of Proposition \ref{BBprop}.
Further,  the functor $\oo(\la)\o_{\oo_\B}(-)$
given by the vertical arrow on the left  of the diagram is
 clearly an
equivalence. We conclude that the functor
$\T^{\nu+\la}_\nu$
given by the vertical arrow on the right
 of the diagram is  an
equivalence as well, and (iv) is proved.

Note that the equivalence of part (iv)
yields, in particular, an algebra isomorphism
\beq{end}
\End_{\AA_{\nu+\la}}(\AA^{\nu+\la}_\nu)=
\End_{\AA_{\nu+\la}}(\T^{\nu+\la}_\nu(\AA_\nu))=
\End_{\AA_\nu}(\AA_\nu)=\AA_\nu^{\op}.
\eeq

Thus, the $(\AA_{\nu+\la}, \AA_\nu)$-bimodule $\AA^{\nu+\la}_\nu$
fits the standard Morita context. Hence,  the general Morita
theory implies all the statements from part (i) of the proposition.

We now prove  (ii). To this end, we observe that the composite
functor
$\T^{\nu+\la+\mu}_{\nu+\la}\ccirc\T_\nu^{\nu+\la}$ is
clearly given by tensoring with $\AA^{\nu+\la+\mu}_{\nu+\la}\
\o_{\AA_{\nu+\la}} \AA_\nu^{\nu+\la}$,
an
$(\AA_{\nu+\la+\mu}, \AA_\nu)$-bimodule.
Therefore, the canonical pairing in (ii) induces
a morphism of functors
\beq{psi}
\T^{\nu+\la+\mu}_{\nu+\la}\ccirc\T_\nu^{\nu+\la}\to
\T_\nu^{\nu+\la+\mu}.
\eeq

  We may transport the latter
morphism via the equivalences provided
by Proposition \ref{BBprop}. In this way, we get a morphism
$\oo(\mu)\o_{\oo_\B}(-)\,\ccirc\,
\oo(\la)\o_{\oo_\B}(-)\too\oo(\mu+\la)\o_{\oo_\B}(-),
$
of functors $\cat\to
{{(\dd_{\nu+\la+\mu},\mc)\mbox{-}\operatorname{mod}}}$.
It is clear from commutativity of diagram \eqref{comm_phi}
that the latter  morphism of functors is 
the one induced by the canonical morphism of sheaves
$\psi: \oo(\mu)\o_{\oo_\B}\oo(\la)\to\oo(\mu+\la)$.

Now, the morphism of sheaves $\psi$ is clearly an isomorphism.
It follows that the associated morphism of functors is
 an isomorphism as well. Thus, we conclude that the
 morphism in \eqref{psi} is  an isomorphism.
For the corresponding bimodules, this implies that the pairing 
in (ii) must be an isomorphism.

The proof of part (iii) of the proposition will be given in \S\ref{Dbimod}.
\end{proof}

\subsection{Characteristic varieties}\label{charvar}
We are going to define a  {\em Kazhdan filtration} on $\dd_\nu$.
To this end,
view $\B$ as a $\gm$-variety via the action $\gm\ni t:\
\b\mto \Ad\gamma_t(\b).$
Any $\gm$-orbit in an arbitrary quasi-projective $\gm$-variety
is known to be contained in an 
{\em affine} Zariski-open $\gm$-stable subset. Thus, we
may view $\dd_\nu$ as a sheaf in the topology formed by
 Zariski-open $\gm$-stable subsets of $\B$.

For any   Zariski-open subset
$U\sset \B$, the order filtration on differential operators 
gives a filtration on the vector space  $\Ga(U,\   \dd_\nu)$.
If, in addition, $U$ is  $\gm$-stable, then the
 $\gm$-action gives a weight decomposition
$\Ga(U,\   \dd_\nu)=\bplus_{i\in\Z}\ \Ga(U,\   \dd_\nu)\langle
i\rangle$.
We are therefore in a position to define an associated Kazhdan 
filtration on
$\Ga(U,\   \dd_\nu)$ by formula \eqref{FK}.
 For the  associated
graded sheaf, one has a canonical isomorphism 
$\gr_\kk\dd_\nu=p_\idot\oo_{T^*\B}$.

Let $\kk_\idot \cv$ be a good $M$-stable Kazhdan filtration on
a $\dd$-module $\cv\in\cat$.
Write ${\wt\gr}_\kk\cv$ for the $M$-equivariant coherent sheaf on
$T^*\B$
such that $p_\idot{\wt\gr}_\kk\cv=\gr_\kk\cv.$

For any weight $\la$, the filtration on
$\cv$ induces one on $\oo(\la)\o_{\oo_\B}\cv$,
hence, also on the vector space
$\Ga(\B,\ \oo(\la)\o_{\oo_\B}\cv)$ and on
$\Wh^\m\Ga(\B,\ \oo(\la)\o_{\oo_\B}\cv)$, by
restriction.

\begin{Lem}\label{filt} 
In the above setting, for all sufficiently dominant $\la\in\xx$,
there is a canonical isomorphism
$$\Ga\big(\ss, \ \oo_\ss(\la)\o_{\oo_\ss}(\tgr_\kk\cv|_\ss)\big)\iso
\gr_\kk\Wh^\m\big(\Ga(\B,\ \oo(\la)\o_{\oo_\B}\cv)\big).$$
\end{Lem}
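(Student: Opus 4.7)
The plan is to assemble three pieces: (a) use Proposition \ref{skr}(iii) to reduce the associated graded of the Whittaker sections to a restriction-to-Slodowy computation, (b) commute $\gr_\kk$ with $\Ga(\B,-)$ by Serre/Kempf vanishing for $\la$ sufficiently dominant, and (c) commute the restriction from $\g^*$ to $\chi+\m^\perp$, followed by passing from $\si$ to $\ss\cong\si/M$, with sections throughout.

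Set $\cv^{(\la)}:=\oo(\la)\o_{\oo_\B}\cv\,\in\,{{(\dd_{\nu+\la},\mc)\mbox{-}\operatorname{mod}}}$, equipped with its induced tensor-product Kazhdan filtration. Let $V:=\Ga(\B,\cv^{(\la)})$. By construction, the $h$-action on $V$ is locally finite and the induced filtration $\kk_\idot V$ is $\m$-stable; moreover $V$ lies in $\catu$ since $\cv^{(\la)}\in\catl$. Applying Proposition \ref{skr}(iii) to $V$ gives canonical graded $\C[\sn]$-module isomorphisms
$$\gr_\kk\Wh^\m(V)\ \cong\ (\gr_\kk V)^M\ \cong\ (\gr_\kk V)\big|_\sn .$$
Thus the lemma reduces to identifying $(\gr_\kk V)|_\sn$ with $\Ga(\ss,\,\oo_\ss(\la)\o_{\oo_\ss}(\tgr_\kk\cv|_\ss))$.

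First I would prove that, for $\la\in\xx$ sufficiently dominant,
$$\gr_\kk V\ =\ \gr_\kk\Ga(\B,\cv^{(\la)})\ \cong\ \Ga\bigl(T^*\B,\ \oo_{T^*\B}(\la)\o_{\oo_{T^*\B}}\tgr_\kk\cv\bigr).$$
Since $p:T^*\B\to\B$ is affine, $\Ga(T^*\B,-)\,=\,\Ga(\B,p_\idot(-))$, and $p_\idot(\oo_{T^*\B}(\la)\o\tgr_\kk\cv)=\oo(\la)\o_{\oo_\B}\gr_\kk\cv=\gr_\kk\cv^{(\la)}$, using that $\oo(\la)$ is locally free. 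The good filtration $\kk_\idot V$ sits in the $\Ga(\B,-)$-spectral sequence whose $E_1$-term involves $H^{>0}(\B,\gr_\kk\cv^{(\la)})$. Since $\pi:T^*\B\to\g^*$ is projective and $\oo_{T^*\B}(\la)$ is $\pi$-ample for dominant $\la$, Serre vanishing applied to the coherent sheaf $\tgr_\kk\cv$ on $T^*\B$ yields $H^{>0}(T^*\B,\oo_{T^*\B}(\la)\o\tgr_\kk\cv)=0$ for $\la$ sufficiently dominant. This forces degeneration and gives the displayed formula.

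Next, I would compute $(\gr_\kk V)|_\sn$ by factoring the restriction through $\si$. By Corollary \ref{flat}(ii), the localization $\varpi^*\oo_\chi\bo_{\oo_{\g^*}}\tgr_\kk\cv$ is $\varpi^\hdot\oo_\chi$-flat (composed with $\pi$), so higher Tor vanishes and base change gives
$$\Ga\bigl(T^*\B,\ \oo_{T^*\B}(\la)\o\tgr_\kk\cv\bigr)\otimes_{\C[\g^*]}\C[\chi+\m^\perp]\ \cong\ \Ga\bigl(\si,\ \oo_\si(\la)\o_{\oo_\si}(\tgr_\kk\cv|_\si)\bigr),$$
possibly after enlarging $\la$ once more to kill higher cohomology on the closed subscheme. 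Finally, by Proposition \ref{red_si}(ii), the action-map $M\times\ss\iso\si$ identifies everything $M$-equivariantly with a pullback from $\ss$, and taking $M$-invariants yields
$$\Ga(\si,\oo_\si(\la)\o(\tgr_\kk\cv|_\si))^M\ \cong\ \Ga(\ss,\oo_\ss(\la)\o(\tgr_\kk\cv|_\ss)),$$
since $M$ is unipotent (so $\Ga(M,\oo_M)^M=\C$). Combining the three isomorphisms yields the canonical identification of the lemma.

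The main obstacle will be Step (b): pinning down the precise quantitative form of ``sufficiently dominant'' so that Serre vanishing holds simultaneously for all graded pieces of the Rees sheaf $\ree{\kk}\cv^{(\la)}$ (rather than piece by piece), and using this to assert termwise degeneration of the filtration spectral sequence. One clean way around this is to work on $\ree{\kk}T^*\B$ (the Rees deformation to the normal cone) and invoke $\pi$-ampleness uniformly in the deformation parameter, then specialize; the same dominance bound then works for the restriction to $\si$ in Step (c), so the two vanishing requirements can be combined into a single cofinal choice of $\la$.
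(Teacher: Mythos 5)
Your high-level plan is sound and is a legitimate alternative organization of the argument, but it has one genuine gap and one ordering problem; once these are repaired the proof closes. For comparison, the paper's proof runs a \emph{single} filtration spectral sequence for the composite left-exact functor $\Wh^\m\Ga(\B,-)$ applied to $\cv(\la):=\oo(\la)\o_{\oo_\B}\cv$, and forces degeneration by showing $R^{>0}\Wh^\m\Ga(\B,\gr_\kk\cv(\la))=0$ for $\la$ dominant enough, via the factorization $R^i\Wh^\m\Ga(\B,-)\cong\Ga\big(\sn,\,R^i\pi_\idot(\invar^M\! q_\idot(-))\big)$ with $\invar^M\! q_\idot$ exact (free $M$-action on $\si$) and $R^{>0}\pi_\idot$ killed by relative ampleness of $\oo_\si(\la)$. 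You instead split this into Proposition~\ref{skr}(iii) plus a $\Ga(\B,-)$-degeneration plus a restriction step; the first two ingredients, in the right order, are fine.

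The ordering problem: Proposition~\ref{skr}(iii) requires the Kazhdan filtration on $V=\Ga(\B,\cv(\la))$ to be \emph{good}, and that is precisely what your step~(b) establishes via cohomology vanishing. So step~(b) must precede the appeal to Proposition~\ref{skr}(iii); as written, the very first display in your proof is not yet justified.

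The genuine gap is in step~(c). You invoke Corollary~\ref{flat}(ii) to obtain flatness and base change for the passage to $\si$. But that corollary is stated for $G$-\emph{equivariant} coherent sheaves on a $G$-scheme, and $\tgr_\kk\cv$ is only $M$-equivariant; the transversality input (Proposition~\ref{ggnil}(i)) concerns $\Ad G$-orbits, and there is no analogous statement for $\Ad M$-orbits, so the corollary does not apply. Luckily the step is unnecessary. Because $\cv\in\cat$ and the filtration is $\m$-stable, for $x\in\m$ and $v\in\kk_j\cv$ the Whittaker identity $x_\dd v - x_M v=\chi(x)v$ gives $(x-\chi(x))v=x_M v\in\kk_j\cv$, while $x-\chi(x)\in\kk_1\ug$; passing to symbols, $\sigma(\mc)$ annihilates $\gr_\kk\cv$. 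Since $\sigma(\mc)$ generates the ideal of $\si=(\varpi\ccirc\pi)\inv(\chi_\m)$, the sheaf $\tgr_\kk\cv$ is \emph{scheme-theoretically} an $\oo_\si$-module, not merely set-theoretically supported on $\si$. Hence $\Ga(T^*\B,\oo_{T^*\B}(\la)\o\tgr_\kk\cv)=\Ga(\si,\oo_\si(\la)\o_{\oo_\si}\tgr_\kk\cv)$ on the nose, with no restriction functor and no higher Tor to worry about; the passage from $\si$ to $\ss$ is exactly the $M$-invariants functor, which is exact by freeness of the $M$-action (isomorphism~\eqref{isom_si}). Finally, the ``uniform dominance'' worry you raise at the end is not actually an issue: $\gr_\kk\cv(\la)$ is a single coherent sheaf (the filtration being good), so Serre vanishing for it automatically handles all graded pieces; the Rees-deformation detour is not needed.
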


\begin{proof} 
Since $\cv\in\cat$, any good Kazhdan filtration $\kk_\idot\cv$
on $\cv$
is bounded below and
we have $\supp\tgr_\kk\cv\sset\si$. 
For any integral $\la$, put $\cv(\la)=\oo(\la)\o_{\oo_\B}\cv$.
It is clear that
 $\kk_\idot\cv(\la)=\oo(\la)\o_{\oo_\B}\kk_\idot\cv.$
Thus, $\tgr_\kk\cv(\la)=\oo_\si(\la)\o_{\oo_\si}\tgr_\kk\cv$
is a coherent sheaf supported on $\si$. 

We  apply the functor $\Wh^\m\Ga(\B,-)$ to
$\cv(\la)$, a filtered sheaf. Thus, 
there is a standard
convergent
spectral sequence involving $R^\hdot\Wh^\m\Ga(\B,-)$,
the right derived functors of the composite functor
$\Wh^\m\Ga(\B,-)$. The spectral sequence reads
\beq{Ewh}
E_1=R^\hdot\Wh^\m\Ga(\B,\ \gr_\kk\cv(\la))\big)\en
\Rightarrow\en
\gr_\kk\big(R^\hdot\Wh^\m\Ga(\B,\ \cv(\la))\big),
\eeq
where $R^\hdot\Wh^\m\Ga(\B,-)$ stand for the derived functors of
 $\Wh^\m\Ga(\B,-)$, a left exact functor.

We claim that, for all  $\la$ dominant enough,
one has $R^i\Wh^\m\Ga(\B,\ \gr_\kk\cv(\la))\big)$
for any $i>0$. 
That would yield the collapse of the above spectral sequence
which would give, in turn, the required canonical isomorphism
$$\Wh^\m\Ga(\B,\ \gr_\kk\cv(\la))\iso
\gr_\kk\Wh^\m\Ga(\B,\ \cv(\la)).
$$

To complete the proof we must check the above claim
that  $R^i\Wh^\m\Ga(\B,\ \gr_\kk\cv(\la))=0$
for any $i>0$.
 To this end,  write
$q:\ \si\onto\si/M=\ss$ for the  projection,
cf. \eqref{isom_si}. Observe further that
 applying the
functor $\Wh^\m$ to the
vector space $\Ga(\si,\ \tgr\cv(\la))$ amounts to taking
$M$-invariants. Thus, writing $\invar^M$ for the functor
of $M$-invariants, we get
$$\Wh^\m\Ga(\B,\ \gr_\kk\cv(\la))=\invar^M\Ga(\si,\ \tgr\cv(\la))=
\invar^M\Ga(\ss,\ q_\idot\tgr\cv(\la))
=\Ga(\ss,\ \invar^M\!q_\idot\tgr\cv(\la)).
$$

The $M$-action on $\si$ being free, one
can repeat
the argument in the proof of \cite{GG}, formula (6.1) and
Proposition 5.2, to show that the functor
$\invar^M\!q_\idot$ is exact (and is isomorphic to the
functor of restriction to the closed submanifold $\ss\sset\si$).
Further, using the  Springer resolution $\pi: \ss\to\sn,$
we can write
$\Ga(\ss,-)=\Ga(\sn, \pi_\idot(-))$.
Therefore, for any $i\geq 0,$ we have  isomorphisms
of derived functors
$$R^i\Wh^\m\Ga(\B,-)\cong R^i\Ga\big(\ss,\ \invar^M\!q_\idot(-)\big)
=\Ga\big(\sn,\ R^i\pi_\idot(\invar^M\!q_\idot(-))\big),
$$
where in the last isomorphism we have used that
$\Ga(\sn,-)$ is an exact functor since $\sn$ is affine.

Recall next that, for a dominant regular weight
$\la$ 
the sheaf $\oo_\si(\la)$ is relatively ample with
respect to the Springer resolution $\pi$.
Therefore, for $\la$ dominant enough, one has
$R^i\pi_\idot(\invar^M\!q_\idot\tgr_\kk\cv(\la))=0$,
for any $i>0$. Hence, for such $\la$, and $i>0$, we obtain
$R^i\Wh^\m\Ga(\B,\ \gr_\kk\cv(\la))=0$, and we are done.
\end{proof}

\subsection{Harish-Chandra $\dd$-modules}\label{Dbimod}
For any pair $\mu,\nu\in\h^*,$ we put
 $\dd_{\mu,\nu}:=\dd_\mu\boxtimes\dd_\nu$, a sheaf of
twisted differential operators on $\BB$.
Let  $G^{sc}$ denote 
a simply-connected cover of
the semisimple group $G$.
The group  $G^{sc}$ acts diagonally 
on $\BB$. We may consider the restriction of this action 
to the 1-parameter subgroup  $\C^\times$ 
and corresponding Kazhdan filtrations on the sheaf $\dd_{\mu,\nu}$ as
well as on
 $\dd_{\mu,\nu}$-modules.

Recall that a  $\dd_{\mu,\nu}$-module
is called $G$-monodromic if it is
$G^{sc}$-equivariant (with respect to the diagonal 
action on $\BB$).
Let   $U\sset\BB$ a Zariski open  subset stable under the
$\C^\times$-diagonal action.
Then,
for any  $G$-monodromic  $\dd_{\mu,\nu}$-module $\cv$,
the induced $\C^\times$-action on
$\Ga(U, \cv)$  is locally finite. Hence, any $G$-stable filtration on
$\cv$ gives an associated Kazhdan filtration \eqref{FK}.

We define a set 
$\ZZ:=\{u\times v\in T^*\B\times T^*\B\mid \pi(u)=-\pi(v)\}$,
where $\pi$ is the Springer resolution  \eqref{TB}.
We will view $\ZZ$ as a closed {\em reduced} subscheme in
$T^*\B\times T^*\B=T^*(\B\times\B)$,
called {\em Steinberg variety}. The assignment
$u\times v\mto \pi(u)$ gives a proper morphism
$\pi:\ \ZZ\to\N$.
For any $G$-monodromic $\dd_{\mu,\nu}$-module ${\mathcal V}$, one has
$\var{\mathcal V}\sset\ZZ$.

Let   ${\mathcal V}$
be a coherent $\dd_{\mu,\nu}$-module
and let $pr:\ \BB\to\B$ denote the first projection. We
abuse the notation and write ${\mathcal V}/{\mathcal V}\mc:=
pr_\idot[{\mathcal V}/(1\boxtimes\mc){\mathcal V}]$ for a 
sheaf-theoretic direct image  of the sheaf of coinvariants
with respect to the action of the Lie algebra
$1\boxtimes\mc\sset\dd_\mu\boxtimes\dd_\nu$.
A filtration on ${\mathcal V}$ induces
one on ${\mathcal V}/{\mathcal V}\mc$.

\begin{Prop}\label{dmc} Let ${\mathcal V}$ be a $G$-monodromic
$\dd_{\mu,\nu}$-module equipped with a $G$-stable good
filtration. Then, the induced filtration on 
the $\dd_\mu$-module ${\mathcal V}/{\mathcal
V}\mc$
is good and we have
${\mathcal V}/{\mathcal V}\mc\in\cat$.
Furthermore, the corresponding Kazhdan filtration on
${\mathcal V}/{\mathcal V}\mc$
is good as well.

In addition,  we have  $ H_j(\m_\chi,\gr_\kk {\mathcal V})=0$
and $H_j(\m_\chi, {\mathcal V})=0,$ for any $j>0$; moreover, the canonical map
$\gr_\kk{\mathcal V}/(\gr_\kk{\mathcal V})\mc\to
\gr_\kk({\mathcal V}/{\mathcal V}\mc)$ is an isomorphism.
\end{Prop}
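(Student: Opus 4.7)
The plan is to follow the template of the proof of Lemma \ref{hom_van}, replacing the Harish-Chandra bimodule $K$ by the $G$-monodromic $\dd_{\mu,\nu}$-module $\mathcal{V}$ and the right $\ug$-action by the right action of $1\boxtimes\mc\subset\dd_\mu\boxtimes\dd_\nu$ on $\mathcal{V}$; the formation of Chevalley--Eilenberg complexes and the taking of homology with respect to $\m_\chi$ can be done section-wise on affine opens of the left factor $\B$, since the right $\m_\chi$-action commutes with the left $\dd_\mu$-action. I would first dispatch the elementary assertions. That $\mathcal{V}/\mathcal{V}\mc\in\cat$ is formal from $G^{sc}$-monodromy: the equivariant $\g$-action and the sum of the left and right $\dd$-module actions of the diagonal $\g$ coincide, so after quotienting by $1\boxtimes\mc$ and pushing forward along $pr$, the difference between the left $\dd_\mu$-action of $x\in\m$ and the $M$-equivariant action becomes multiplication by $\chi(x)$. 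The induced order filtration on $\mathcal{V}/\mathcal{V}\mc$ is good, as the quotient of a good filtration, and $h$-stable, since $h$ acts through the one-parameter subgroup $\gamma:\gm\to G^{sc}$; hence the associated Kazhdan filtration is good as well by the argument of Corollary \ref{FK_cor}.

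The heart of the proof is the homology vanishing, which I would obtain by invoking Lemma \ref{holland}. Two hypotheses must be checked. Flatness of the localization of $\gr_\kk\mathcal{V}$ at $\chi+\m^\perp$ over $\varpi^\hdot\oo_\chi$ will follow from Corollary \ref{flat}(ii) applied to the $G$-equivariant second Springer projection $\pi_2: T^*\BB\to\g^*$, since $\gr_F\mathcal{V}$ is a $G$-equivariant coherent sheaf on $T^*\BB$ (supported on the Steinberg variety $\ZZ$); the isomorphism \eqref{F=K} then transports the flatness from $\gr_F\mathcal{V}$ to $\gr_\kk\mathcal{V}$. Convergence of the induced Kazhdan filtration on the Chevalley--Eilenberg complex $C=\wedge^\hdot\m_\chi\o\mathcal{V}$ amounts to the two equations in \eqref{convergent}: the first, $Z^\infty_p=\cap_r Z^r_p$, follows from Lemma \ref{CE} since $\kk_\idot B$ and $\kk'_\idot B$ are both good filtrations on the finitely generated left $\dd_\mu$-module of boundaries and are therefore equivalent by Lemma \ref{good}; the second, separatedness of the Kazhdan filtration on each $H_j(\m_\chi,\mathcal{V})$, follows automatically once $H_j(\m_\chi,\mathcal{V})\in\cat$, since good Kazhdan filtrations on objects of $\cat$ are bounded below.

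The main obstacle is precisely the verification that $H_j(\m_\chi,\mathcal{V})\in\cat$, the $\dd$-module counterpart of the final step in the proof of Lemma \ref{hom_van}. I would carry over the Lie-derivative trick verbatim: the left $\mc$-action on $C$ decomposes as the sum of an adjoint action and a Lie-derivative action; the former is locally nilpotent because $G$-monodromy makes the adjoint $\g$-action on $\mathcal{V}$ locally finite and $\mc$ is a nilpotent Lie algebra, while the latter induces the zero action on homology. Once these pieces are assembled, Lemma \ref{holland} simultaneously yields the vanishings $H_j(\m_\chi,\gr_\kk\mathcal{V})=H_j(\m_\chi,\mathcal{V})=0$ for $j>0$ together with the canonical isomorphism $\gr_\kk\mathcal{V}/(\gr_\kk\mathcal{V})\mc\iso\gr_\kk(\mathcal{V}/\mathcal{V}\mc)$, completing the proof.
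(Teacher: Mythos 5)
Your proposal follows the same basic template as the paper's proof --- reduce to Lemma \ref{holland} via Lemma \ref{hom_van}, check flatness by applying Corollary \ref{flat}(ii) to the moment map on the Steinberg variety, and verify convergence via Lemma \ref{CE} --- and these steps are handled correctly (your ``second Springer projection'' $\pi_2$ restricted to $\ZZ$ is, up to sign, the paper's map $\pi:\ZZ\to\N$, so the flatness input is the same).

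However, there is a genuine gap in your dispatch of the ``goodness'' assertion. You write that the induced filtration on $\mathcal V/\mathcal V\mc$ is good ``as the quotient of a good filtration,'' but this only addresses the quotient $\cv/(1\boxtimes\mc)\cv$, which lives on $\B\times\B$. The object $\mathcal V/\mathcal V\mc$ is by definition the sheaf-theoretic direct image $pr_\idot\big[\cv/(1\boxtimes\mc)\cv\big]$ along the first projection $pr:\B\times\B\to\B$, and the pushforward of a good filtration along a non-affine morphism is not automatically good. Your ``section-wise on affine opens of the left factor $\B$'' framing actually makes this worse: over an affine $U\subset\B$ one is taking $\Ga(U\times\B,-)$, i.e., relative global sections over the projective second factor, and coherence of the associated graded is exactly what has to be proved. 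The paper handles this by observing that $\tgr_F\cv$ is a coherent sheaf on the Steinberg variety $\ZZ$ and that the composite $\wt\pr:\ZZ\into T^*\B\times T^*\B\to T^*\B$ is \emph{proper} (the fibers being Springer fibers), so $\wt\pr_\idot(\tgr_F\cv)$ is coherent; goodness of the induced filtration on $\mathcal V/\mathcal V\mc$ follows from this, not from the quotient argument. Without invoking the properness of $\ZZ\to T^*\B$ you cannot conclude that the order filtration, and hence the Kazhdan filtration, on $\mathcal V/\mathcal V\mc$ is good, which in turn underlies the application of Lemma \ref{CE} (the two filtrations on boundaries must be good filtrations of finitely generated $\dd_\mu$-modules).
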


\begin{proof} Given a  $G$-stable good
filtration $F_\idot\cv$, write ${\wt\gr}_F\cv$
for the  coherent $\oo_{T^*\B\times T^*\B}$-module such
that $p_\idot{\wt\gr}_F\cv=\gr_F\cv$.
Then, we have that $\tgr_F\cv$ is a
$G^{sc}$-equivariant
coherent sheaf on $\ZZ$. The composite
$\wt\pr:\ \ZZ\into T^*\B\times T^*\B\to T^*\B$,
of the closed imbedding and the first projection,
is a proper morphism. Hence, the push-forward $\wt\pr_\idot({\wt\gr}_F\cv)$
is a coherent sheaf on $T^*\B$. It follows
that the induced filtration on  ${\mathcal V}/{\mathcal V}\mc$,
viewed as a left  $\dd_\mu$-module,
is good and that  ${\mathcal V}/{\mathcal V}\mc$
is a coherent  $\dd_\mu$-module.

Now, applying Corollary \ref{flat}(ii)
to the morphism $\pi:\ \ZZ\to\N$ we deduce that
the stalk of the sheaf $\tgr_F\cv$
at any point of $\pi\inv(\chi+\m^\perp)$ is a flat
$(\varpi\ccirc\pi)^\hdot\oo_\chi$-module. 
At this point, the proof of Lemma \ref{hom_van}
(cf. the proof of Theorem \ref{main_thm}
given in \S\ref{pf_sec}) goes through verbatim in our present
situation. 
\end{proof}

The Kazhdan filtration on $\dd_\nu$
induces a good Kazhdan filtration on each of the objects
$\dd_{\nu}^{\nu+\la},
\ \qq_\nu,$ and also on  $\qq_\nu^{\nu+\la}.$
One may consider $\dd_\nu$ as a $\dd_\nu$-bimodule,
resp. $\dd_{\nu}^{\nu+\la}$ as a $(\dd_{\nu+\la},\dd_{\nu})$-bimodule.
Such a bimodule may be viewed as a  $G$-monodromic left
$\dd_{\nu,\mu}$-module, resp.
$\dd_{\nu+\la,\mu}$-module, supported on the diagonal
$\B\sset\BB$. Here, the weight $\mu$ is given by the formula
$\mu=-w_0(\la)+2\rho$, where $w_0$ stands for the longest element in the
Weyl group.

Recall that the group $\C^\times$ acts on $\ss$ via the $\bullet$-action.

\begin{Cor}\label{Q} For an $\nu,\la\in\h^*$, we have

\vi 
 $\qq_\nu^{\nu+\la}\in \cat$ is a Cohen-Macaulay left $\dd_{\nu+\la}$-module
and there is an $M\times \C^\times$-equivariant isomorphism
$\gr_\kk\qq_\nu^{\nu+\la}=p_\idot\oo_\si(\la)$.

\vii If $\la\in\xx$, then we have $H^j(\B, \qq_\nu^{\nu+\la})=0$
for any $j>0$; moreover, the projection
$\dd_\nu^{\nu+\la}\onto\qq_\nu^{\nu+\la}$
induces an isomorphism
$$\Gamma(\B,\dd_\nu^{\nu+\la})/\Gamma(\B,\dd_\nu^{\nu+\la})\mc
\ \iso\
\Gamma(\B,\qq_\nu^{\nu+\la})=Q_\nu^{\nu+\la}.
$$
\end{Cor}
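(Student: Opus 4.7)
The plan is to deduce both parts from the identification $\gr_\kk\qq_\nu^{\nu+\la}=p_\idot\oo_\si(\la)$ combined with vanishing theorems for line bundles on the Springer resolution. For part (i), the Whittaker property is checked at the canonical generator $v_0\in\qq_\nu^{\nu+\la}$: writing $x=(x-\chi(x))+\chi(x)$ one has $x_{_\dd}(v_0)=\chi(x)v_0$ since $x-\chi(x)\in\mc$ dies in the quotient, while $v_0$ is fixed by the $M$-equivariant structure (inherited from the left-translation action on $\dd_\nu$), so $x_{_M}(v_0)=0$; the identity then extends to all sections by $\dd$-linearity. For the graded computation, equip $\dd_\nu$ with its Kazhdan filtration so that $\gr_\kk\dd_\nu=p_\idot\oo_{T^*\B}$, and observe that the principal symbols of the elements of $\mc$ are, via the moment map $\varpi\ccirc\pi$, precisely the $\dim\m$ functions on $T^*\B$ cutting out $\si$. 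By Proposition \ref{red_si}(i), $\si$ is smooth of codimension $\dim\m$, hence these symbols form a regular sequence. Therefore the Koszul complex $\dd_\nu\o\wedge^\hdot\mc\to\qq_\nu$ is a filtered resolution whose associated graded is the classical Koszul resolution of $p_\idot\oo_\si$. This yields $\gr_\kk\qq_\nu=p_\idot\oo_\si$; tensoring with $\oo(\la)$ in Kazhdan degree zero gives the stated $M\times\C^\times$-equivariant isomorphism. Cohen--Macaulayness of $\qq_\nu^{\nu+\la}$ follows because $\oo_\si(\la)$ is a line bundle on the smooth pure-codimension subscheme $\si\sset T^*\B$.

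For part (ii), the Kazhdan filtration on $\qq_\nu^{\nu+\la}$ is non-negative, so the filtration spectral sequence converges from $H^\hdot(\B,\gr_\kk\qq_\nu^{\nu+\la})$ to $H^\hdot(\B,\qq_\nu^{\nu+\la})$. Since $p$ is affine, $H^j(\B,p_\idot\oo_\si(\la))=H^j(\si,\oo_\si(\la))$. Via the $M$-equivariant isomorphism $M\times\ss\iso\si$ of Proposition \ref{red_si}(ii) and K\"unneth (noting $M$ is affine unipotent), this equals $\C[M]\o H^j(\ss,\oo_\ss(\la))$. For $\la\in\xx$, Kempf-type vanishing for the Springer resolution yields $R^i\pi_\idot\oo_\ss(\la)=0$ for $i>0$, and affineness of $\sn$ then gives $H^j(\ss,\oo_\ss(\la))=0$ for $j>0$. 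Hence $H^j(\B,\qq_\nu^{\nu+\la})=0$ for $j>0$.

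For the isomorphism in (ii), apply $\Gamma(\B,-)$ to the Koszul resolution $\dd_\nu^{\nu+\la}\o\wedge^\hdot\mc\to\qq_\nu^{\nu+\la}$ from (i). The same spectral-sequence argument, now using Kempf vanishing for $\oo_{T^*\B}(\la)$ on the full Springer resolution $T^*\B\to\N$, kills the higher cohomology of each Koszul term, leaving an exact sequence
\[
\Gamma(\B,\dd_\nu^{\nu+\la})\o\mc\too\Gamma(\B,\dd_\nu^{\nu+\la})\too\Gamma(\B,\qq_\nu^{\nu+\la})\too 0,
\]
whose second map has kernel $\Gamma(\B,\dd_\nu^{\nu+\la})\cdot\mc$, giving the isomorphism. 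The main obstacle is the Kempf-type cohomological vanishing for $\oo_\ss(\la)$ with $\la\in\xx$: for $T^*\B\to\N$ it is classical, but for the restriction $\ss\to\sn$ one must invoke that $\pi:\ss\to\sn$ is itself a symplectic resolution (Proposition \ref{red}) together with compatibility under base change along $\sn\into\N$, or else argue directly via the $\bullet$-action retraction of $\ss$ onto $\B_\chi$.
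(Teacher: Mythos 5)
Your overall strategy (Koszul complex, Broer vanishing, filtration spectral sequences) is close in spirit to the paper's, but two points need attention, one of which is a genuine gap.

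The main gap is in part (i), in the sentence ``Therefore the Koszul complex $\dd_\nu\o\wedge^\hdot\mc\to\qq_\nu$ is a filtered resolution whose associated graded is the classical Koszul resolution.'' The deduction ``$\gr$ of the complex is acyclic, therefore the filtered complex is acyclic and $\gr H_0 = H_0(\gr)$'' requires the filtration to be convergent; it is \emph{not} automatic for an arbitrary $\Z$-filtration. The Kazhdan filtration on $\dd_\nu$ (like the one on $\ug$) is genuinely unbounded below: the grading $\g=\bplus\g(i)$ contains negative weights, and $\g(i)\subset\kk_{i+2}\ug$, so products of increasing order but very negative total weight land in $\kk_n$ for $n$ as negative as one likes. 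The paper therefore devotes real work to this point: the proof of Corollary~\ref{Q}(i) goes through Proposition~\ref{dmc}, which in turn relies on Lemma~\ref{holland} (Holland's spectral-sequence lemma), whose hypotheses include a convergence condition that is verified by Lemma~\ref{CE} together with the good-filtration argument inside the proof of Lemma~\ref{hom_van}. In your write-up that entire layer is collapsed into the word ``Therefore.'' You would need to either import those lemmas or give an independent convergence argument (e.g.\ restrict to a $\gm$-stable affine chart, pass to a finitely generated module, and argue via goodness of the induced Kazhdan filtration). Note that the paper's route also packages the acyclicity of the Chevalley--Eilenberg/Koszul complex in positive degrees, so once convergence is settled the rest of your part (i) is fine; in particular the Cohen--Macaulay claim, which you deduce from CM-ness of $\gr_\kk\qq_\nu^{\nu+\la}=p_\idot\oo_\si(\la)$, is correct in outline (the paper instead dualizes the CE resolution explicitly and identifies the answer with a twist of a $\qq$-sheaf, but passing to $\gr$ and using purity of $\si\sset T^*\B$ gives the same conclusion).

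On part (ii), the route you flag as problematic (Kempf-type vanishing for $\oo_\ss(\la)$ on the Slodowy resolution) is in fact unnecessary: the Koszul-plus-Broer argument you use for the global-sections isomorphism already gives the vanishing for free. Once the Koszul complex $C^\hdot$ is known to be a resolution of $\qq_\nu^{\nu+\la}$ (after the gap above is filled) and each term satisfies $H^q(\B,C^p)=0$ for $q>0$ by Broer's $H^q(T^*\B,\oo_{T^*\B}(\la))=0$, the two hypercohomology spectral sequences give $H^n(\B,\qq_\nu^{\nu+\la})\cong H^n\big(\Ga(\B,C^\hdot)\big)$ for all $n$; since $\Ga(\B,C^\hdot)$ is concentrated in cohomological degrees $\le 0$, the right-hand side vanishes for $n>0$. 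So drop the $\ss$-vanishing paragraph, keep the Koszul argument, and observe that it yields both the vanishing and the isomorphism simultaneously. (This is precisely what the paper does: it uses Broer on $T^*\B$ and then applies Lemma~\ref{hom_van} to $K=\Gamma(\B,\dd_\nu^{\nu+\la})$ to identify the $H_0$ of the complex with $K/K\mc$.)
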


Recall that the Cohen-Macaulay property claimed in part (i)
of the corollay says that the sheaves
${\scr E}xt^\hdot_{\dd_{\nu+\la}}(\qq_\nu^{\nu+\la},\dd_{\nu+\la})$
vanish in all degrees but one.

\begin{proof} We  view $\cv:=\dd_\nu^{\nu+\la}$
as a $\dd_{\nu+\la,\mu}$-module as explained above.
Applying Proposition \ref{dmc}, we deduce
the isomorphisms $\gr_\kk\qq_\nu^{\nu+\la}\cong
\gr_\kk \dd_\nu^{\nu+\la}/(\gr_\kk \dd_\nu^{\nu+\la})\mc\cong
p_\idot\oo_\si(\la)$.
In addition, we deduce that the
Chevalley-Eilenberg complex $\dd_\nu^{\nu+\la}\otimes\wedge^\hdot\mc$
is acyclic in positive degrees.
Thus, forgetting the right action, we see that the
Chevalley-Eilenberg complex provides a resolution
of $\qq_\nu^{\nu+\la}$ by locally free 
 left $\dd_{\nu+\la}$-modules.

To prove the Cohen-Macaulay property, we
use the Chevalley-Eilenberg complex as  a resolution
of $\qq_\nu^{\nu+\la}$.
Thus, the sheaves ${\scr
E}xt^\hdot_{\dd_{\nu+\la}}(\qq_\nu^{\nu+\la},\dd_{\nu+\la})$
may be obtained as  cohomology groups of the complex
\beq{ext}
{\scr H}om_{\dd_{\nu+\la}}(\dd_\nu^{\nu+\la}\otimes\wedge^\hdot\mc,
\,
\dd_{\nu+\la})=
{\scr H}om_{\dd_{\nu+\la}}(\oo(\la)\o\dd_\nu,\,
\dd_{\nu+\la})\otimes(\wedge^\hdot\m_\chi)^*.
\eeq

Recall that the functor $\oo(\la)\o(-)$ is an equivalence
and  $\dd_{\nu+\la}=\oo(\la)\o\dd_\nu\o
\oo(-\la)$. Hence, we get
an isomorphism
$${\scr H}om_{\dd_{\nu+\la}}(\oo(\la)\o\dd_\nu,\,
\dd_{\nu+\la})={\scr H}om_{\dd_\nu}(\dd_\nu,\,\dd_\nu\o
\oo(-\la))=
\oo(-\la)\o(\dd_\nu)^{op}.
$$

Put $m:=\dim\mc$ and recall that one has a
canonical isomorphism 
$(\dd_\nu)^{op}=\dd_\mu$ where
$\mu=-w_0(\nu)+2\rho$. 
For any $j\geq 0,$
we deduce
$$
{\scr H}om_{\dd_{\nu+\la}}(\dd_\nu^{\nu+\la}\otimes\wedge^j\mc,
\,
\dd_{\nu+\la})=
\oo(-\la)\o(\dd_\nu)^{op}\o\wedge^j\m_\chi^*
=[\dd_\mu^{\mu-\la}\o\wedge^{m-j}\m_\chi]\o
\wedge^m\mc^*.
$$

The Lie algebra $\mc$ being nilpotent, one has
an isomorphism $\wedge^m\m_\chi^*\cong\C$, of 
$\mc$-modules.
We see that the complex
on the right of \eqref{ext} may be
identified, up to degree shift, with the Chevalley-Eilenberg complex
$\dd_\mu^{\mu-\la}\o\wedge^\hdot\mc$. But
the latter complex
is a resolution of
the $\dd_{\mu-\la}$-module $\qq_{\mu-\la}$,
by the first paragraph of the proof.
Therefore, the complex \eqref{ext}
has a single nonvanishing cohomology
group which is isomorphic to $\qq_{\mu-\la}$.
This completes the proof of part (i).

To prove part (ii) we recall that,
for $\la\in\xx$ and any $j>0$, one has  the cohomology vanishing
 $H^j(T^*\B,\ \oo_{T^*\B}(\la))=0$, cf. \cite{Br}.
Using that $\tgr\dd_\nu^{\nu+\la}=\oo_{T^*\B}(\la)$,
by a standard spectral sequence argument, we deduce
 $H^j(\B,\dd_\nu^{\nu+\la})=0$ for all $j>0$.
Thus, the Chevalley-Eilenberg complex $\dd_\nu^{\nu+\la}\otimes\wedge^\hdot\mc$
yields a $\Gamma$-acyclic resolution of $\qq_\nu^{\nu+\la}$,
the latter being viewed as a sheaf  on $\B$.
We conclude that the sheaf cohomology groups $H^j(\B,\qq_\nu^{\nu+\la})$ may be
computed as the cohomology groups of the complex
\beq{complex}
\ldots\to \Gamma(\B,\,\dd_\nu^{\nu+\la}\otimes\wedge^3\mc)\to \Gamma(\B,\,\dd_\nu^{\nu+\la}\otimes\wedge^2\mc)\to
 \Gamma(\B,\,\dd_\nu^{\nu+\la}\otimes\mc)\to\Gamma(\B,\,\dd_\nu^{\nu+\la}).
\eeq

Next we apply Lemma \ref{hom_van} to 
$K:=\Gamma(\B,\,\dd_\nu^{\nu+\la})$, a Harish-Chandra
$(\U_{\nu+\la},\U_\mu)$-bimodule. We conclude that
the complex \eqref{complex} is acyclic in positive
degrees and, in degree 0, we have
$H^0(\B,\qq_\nu^{\nu+\la})=K/K\mc$.
This proves part (ii) of the corollary.
\end{proof}

\begin{proof}[Proof of Proposition \ref{equiv}(iii)] For any weights
$\nu,\la$, we 
have 
\beq{ttt1}\qq_\nu^{\nu+\la}=\oo(\la)\o_{\oo_\B} \dd_\nu/
(\oo(\la)\o_{\oo_\B} \dd_\nu)\mc=
\oo(\la)\o_{\oo_\B}(\dd_\nu/\dd_\nu\mc).
\eeq

We apply Lemma \ref{filt}
to the $\dd_\nu$-module $\cv=\Wh_\m\dd_\nu:=\dd_\nu/\dd_\nu\mc$.
The lemma says that, for all sufficiently dominant $\la\in\xx$, one has
\beq{ttt2}
\gr_\kk\Wh^\m\Ga\big(\B,\ \oo(\la)\o_{\oo_\B}
(\dd_\nu/\dd_\nu\mc)\big)=
\Ga\big(\ss,\
\oo_\ss(\la)\o_{\oo_\ss}(\tgr_\kk\Wh_\m\dd_\nu)|_\ss)\big).
\eeq

On the other hand, Corollary \ref{Q} yields
$\tgr_\kk\Wh_\m\dd_\nu=\oo_\si$, hence
 $(\tgr_\kk\Wh_\m\dd_\nu)|_\ss=\oo_\ss$.
Thus, using \eqref{ttt1}-\eqref{ttt2}, we get
$$\gr_\kk A_\nu^{\nu+\la}=\gr_\kk\Wh^\m\Ga\big(\B,\qq_\nu^{\nu+\la})=
\Ga(\ss, \
\oo_\ss(\la)\o_{\oo_\ss}\oo_\ss)=\Ga(\ss, \
\oo_\ss(\la)).
$$

The verification of compatibilities with the pairings is left for the reader.
\end{proof}
\section{Noncommutative resolutions of Slodowy slices}\label{nc_sec}
\subsection{Directed algebras}\label{t-alg} Let  $\La$ be a torsion free abelian group,
and  $\La^+\sset\La$ a subsemigroup such that
$\La^+\,\cap\,(-\La^+)=\{0\}.$ Given a pair $\mu,\nu\in\La$,
we write $\mu\seq \nu$ whenever $\mu-\nu\in\La^+.$ This gives
a partial order on $\La$.

A  {\em  directed algebra}  is a vector space
$B:=\bplus_{\mu\seq \nu}\ B_{\mu\nu}$, graded by pairs $\mu,\nu\in\La$
such that $\mu\seq \nu$, and equipped, for each triple
$\mu\seq \nu \seq \la$, with a bilinear multiplication pairing
$B_{\mu\nu}\o B_{\nu\la}\to B_{\mu\la}$. These pairings are required to satisfy,
 for each quadruple $\mu\seq \nu \seq \la\seq \varrho$, a natural associativity
condition, cf. \cite{Mu}.
In the special case where the group $\La=\Z$ is equipped
with the usual order, our definition reduces 
to the notion  of $\Z$-algebra  used in
\cite{GS}, and \cite{Bo}.

In general, for any  directed algebra $B$, 
 the multiplication pairings give each of the spaces
$B_{\mu\mu}$ an associative algebra structure. Similarly,
for each pair $\mu\seq \nu$, the space $B_{\mu\nu}$ acquires 
a $(B_{\mu\mu},B_{\nu\nu})$-bimodule structure. 
Note that, for any $\mu\seq\nu\seq\la$, the mutiplication pairing
descends to a well defined map $B_{\mu\nu}\o_{B_{\nu\nu}} B_{\nu\la}\to B_{\mu\la}$.

\begin{examp}\label{BBB}
Let
$\bb=\bplus_{\la\in\La}\, \bb_\la$ be an ordinary $\La$-graded 
associative algebra. For any pair $\mu\seq\nu$, put
$B_{\mu\nu}:=\bb_{\mu-\nu}$. Then, the {\em bi}graded
space  ${}^{\sharp\!}\bb=\bplus_{\mu\seq \nu}\,
B_{\mu\nu}$ has a natural structure of  directed algebra.
It is called the  directed algebra associated with the graded algebra $\bb$.
\end{examp}

We say that a directed algebra
$B$ is {\em filtered} provided, for each pair $\mu\succ\nu$,
one has
an ascending  $\Z$-filtration $F_\idot B_{\mu,\nu}$, on the corresponding
 component $B_{\mu,\nu}$, and multiplication pairings 
$B_{\mu,\nu}\o B_{\nu,\la}\to B_{\mu,\la}$ respect the filtrations.
There is an associated  graded directed algebra $\gr
B$, with components
$(\gr B)_{\mu\nu}:=\bplus_{i\in\Z}\ F_iB_{\mu\nu}/F_{i-1}B_{\mu\nu}$.

Given a  directed algebra $B=\bplus_{\mu\seq \nu}\,B_{\mu\nu},$ 
one has the notion of an $\La$-graded
$B$-module. Such a module is, by definition,
an $\La$-graded vector space
$M=\bplus_{\nu\in\La}\, M_\nu$ equipped, for each pair
$\mu\seq\nu,$ with an `action map'
$\act_{\mu,\nu}:\ B_{\mu\nu} \o M_\nu\to M_\mu$, that satisfies a
natural associativity condition for each triple
$\mu\seq\nu\seq\la$. Such a module $M$ is said to be finitely
generated if there exists a finite collection
of elements $m_1\in M_{\nu_1},\ldots,m_p\in M_{\nu_p}$
such that, one has
\beq{fin_gen}
M_\mu=\sum_{i=1}^p\ \act_{\mu,\nu_i}(B_{\mu,\nu_i},\,m_i),\qquad
\forall \mu\in\La.
\eeq
We let $\Gr{B}$ denote the category of finitely generated  $\La$-graded left
$B$-modules.

Given a   $\La$-graded
$B$-module $M$, we put $\Spec M:=\{\nu\in\La\mid M_\nu\neq 0\}.$ It is
clear that, for any $M\in \Gr{B}$, there exists a finite
subset  $S\sset\La$ such that we have $\Spec M\sset S+\La^+$.
We say that $M$ is  {\em negligible} if there
exists $\nu\in\La$ such that
$(\nu+\La^+)\cap \Spec M=\emptyset$.
Let $\tail{B}$ be the  full    subcategory of $\Gr{B}$
whose objects are  negligible $B$-modules.

An  directed algebra $B$ is said to be {\em noetherian} if, for each
$\la\in\La$, the algebra $B_\la$ is left noetherian and, moreover,
$B_{\la\nu}$ is a finitely generated left $B_\la$-module, for any $\la\seq\nu.$
It is known, see Boyarchenko \cite[Theorem 4.4(1)]{Bo},
that, for  a noetherian  directed algebra $B$, the category $\Gr{B}$
is  an abelian category, and   $\tail{B}$ is its Serre subcategory.
Thus, one can define $\qgr{B}:=\Gr{B}/\tail{B}$, a Serre quotient category.

Fix $\al\in\La$.
Given a directed algebra $B=\bplus_{\mu\seq \nu}\,B_{\mu\nu},$
resp.  a   $\La$-graded
$B$-module $M=\bplus_{\mu}M_\mu$, put
$B^{\seq \al}=\bplus_{\mu\seq \nu\seq\al}\,B_{\mu\nu}$,
resp. $M^{\seq \al}=\bplus_{\mu\seq\al}M_\mu.$
Thus, $B^{\seq \al}$ may be viewed as a directed
subalgebra of $B$ which has zero homogeneous
components $(B^{\seq \al})_{\mu\nu}$ unless
$\mu\seq \nu\seq\al.$ Similarly,
 $M^{\seq \al}$  may be viewed as a
$B$-submodule in $M$. If $B$ is  noetherian then
 $B^{\seq \al}$ is clearly  noetherian as well.

One easily proves the
following result

\begin{Prop}\label{tail} Assume that the pair $(\La,\La^+)$ satisfies
the following two conditions: 
\vskip 3pt

\npb{\em The semi-group $\La_+$ is finitely generated.}

\npb{\em For any $\mu,\nu\in\La$
the set $(\mu+\La^+)\cap
(\nu+\La^+)$ is nonempty.}\vskip 3pt

\noindent Then, for any  noetherian  directed algebra
$B$
and any  $\al\in\La$, restriction of scalars gives a 
well defined functor
$\Gr{B}\to\Gr{B^{\seq \al}}$. Furthermore, this functor induces an equivalence
$\qgr{B}\iso\qgr{B^{\seq \al}}$.\qed
\end{Prop}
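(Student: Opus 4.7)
My plan is to establish the equivalence by constructing an explicit quasi-inverse and verifying Serre-theoretic compatibility. The functor in question is the truncation $R_\alpha : M \mapsto M^{\seq\alpha}$. The main technical content of Step 1 (well-definedness of $R_\alpha : \Gr B\to\Gr{B^{\seq\alpha}}$) is showing that for $M\in\Gr B$ the truncation $M^{\seq\alpha}$ is finitely generated over $B^{\seq\alpha}$. Given generators $m_1,\ldots,m_p$ of $M$ at degrees $\nu_1,\ldots,\nu_p$, I would invoke the second hypothesis repeatedly to find a common upper bound $\beta\seq\alpha,\nu_1,\ldots,\nu_p$. The noetherian hypothesis then implies $M_\beta$ is finitely generated over $B_{\beta\beta}$; generators of $M_\beta$, together with finitely many extra elements chosen at intermediate degrees $\mu\seq\alpha$ to handle the ``gap'' $\{\mu : \mu\seq\alpha,\ \mu\not\seq\beta\}$, are to generate $M^{\seq\alpha}$ over $B^{\seq\alpha}$; the finite generation of $\La^+$ is needed here to ensure that only finitely many auxiliary generators are required.

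Next, one checks $M\in\tail B$ iff $R_\alpha M\in\tail{B^{\seq\alpha}}$: the direct implication is trivial by restriction of spectra, and the converse uses the second hypothesis to replace any gap-witness $\nu$ by some $\nu'\seq\nu,\alpha$ so that $\nu'+\La^+\sset\alpha+\La^+$, whence $\Spec M\cap(\nu'+\La^+)=\Spec(R_\alpha M)\cap(\nu'+\La^+)=\emptyset$. Hence $R_\alpha$ descends to $\bar R_\alpha:\qgr B\to\qgr{B^{\seq\alpha}}$, which is faithful. Essential surjectivity is given by the extension-by-zero functor $E_\alpha:\Gr{B^{\seq\alpha}}\to\Gr B$, $(E_\alpha N)_\mu:=N_\mu$ for $\mu\seq\alpha$ and $0$ otherwise. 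The $B$-action on $E_\alpha N$ is forced because for any $\mu\seq\nu$, either $\nu\not\seq\alpha$ (so the source is zero) or $\nu\seq\alpha$ (which forces $\mu\seq\alpha$ by transitivity); finite generation is inherited; and $R_\alpha E_\alpha=\Id$.

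The main obstacle is fullness of $\bar R_\alpha$, i.e.\ that
\[
\Hom_{\qgr B}(M,M')\iso\Hom_{\qgr{B^{\seq\alpha}}}(R_\alpha M,R_\alpha M').
\]
Represent a morphism on the right by a $B^{\seq\alpha}$-linear map $\phi:L\to R_\alpha M'/T$ where $L\sset R_\alpha M$ and $T\sset R_\alpha M'$ are such that $R_\alpha M/L$ and $T$ are negligible. Setting $\tilde L:=B\cdot L\sset M$ and $\tilde T:=B\cdot T\sset M'$, the argument of the previous paragraph shows $M/\tilde L$ and $\tilde T$ are negligible in $\Gr B$. Extending $\phi$ to $\tilde\phi:\tilde L\to M'/\tilde T$ by $B$-linearity requires that any $B$-relation among elements of $L$ go to zero, but a priori only $B^{\seq\alpha}$-relations are controlled by $\phi$. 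Discrepancies form a finitely generated submodule by noetherianity which, being supported at degrees bounded above, can be absorbed by enlarging $\tilde T$ within its negligibility class. This absorption step --- essentially Boyarchenko's \cite{Bo} argument carried over to the broader directed setting --- is the principal technical difficulty.
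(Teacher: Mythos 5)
The paper itself gives no proof of Proposition~\ref{tail} (it is stated with a \verb|\qed| after the phrase ``One easily proves''), so there is no author's argument to compare against; I can only assess your sketch on its own merits. The overall framework --- truncation $R_\alpha$, extension by zero $E_\alpha$, passage to Serre quotients --- is the right one. However, two of the three steps you flag as the technical content are handled in a way that is either murkier than necessary or, in one case, based on a misconception.

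On well-definedness (finite generation of $M^{\seq\alpha}$): your plan --- find a common bound $\beta\seq\alpha,\nu_1,\dots,\nu_p$, take generators of $M_\beta$, and add ``finitely many auxiliary generators'' in the gap $\{\mu:\mu\seq\alpha,\ \mu\not\seq\beta\}$ --- does not obviously close. The gap set can be infinite (already for $\Lambda=\Z^2$, $\Lambda^+=\Z^2_{\geq0}$), and more seriously, even if one chooses $\gamma$ in the gap with $\gamma\seq\nu_i$, one would need something like $B_{\mu\nu_i}\sset B_{\mu\gamma}\,B_{\gamma\nu_i}$ to re-express $B_{\mu\nu_i}m_i$ in terms of the chosen generators; the noetherian hypothesis gives no such ``generation'' property. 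The clean route is different: observe that $M^{\seq\alpha}$ is automatically a $B$-\emph{sub}module of $M$, because an action map $\act_{\mu\nu}\colon B_{\mu\nu}\o M_\nu\to M_\mu$ with $\nu\seq\alpha$ forces $\mu\seq\nu\seq\alpha$, while if $\nu\not\seq\alpha$ then $M^{\seq\alpha}_\nu=0$. Now Boyarchenko's theorem (cited in the paper as \cite[Theorem 4.4(1)]{Bo}, applied in the directed setting and using the finite generation of $\La^+$) says $\Gr B$ is a noetherian category, so $M^{\seq\alpha}$ is finitely generated over $B$; its generators may be taken in degrees $\seq\alpha$, and on a module concentrated in those degrees the $B$-action is exactly the $B^{\seq\alpha}$-action. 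This gives finite generation over $B^{\seq\alpha}$ without any gap analysis.

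On fullness: your ``principal technical difficulty'' is a phantom. You set $\tilde L=B\cdot L$ and worry about extending $\phi$ from $B^{\seq\alpha}$-linearity to $B$-linearity, with discrepancies to be absorbed. But by the same directedness observation as above, a $B^{\seq\alpha}$-submodule $L\sset M^{\seq\alpha}$ \emph{is already} a $B$-submodule of $M$, so $\tilde L=L$; likewise $\tilde T=T$. The composite $L\to R_\alpha M'/T=(M'/T)^{\seq\alpha}\hookrightarrow M'/T$ is then automatically $B$-linear, since on modules concentrated in degrees $\seq\alpha$ the two module structures coincide. There is nothing to extend and no discrepancies to absorb. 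The only thing that actually needs checking is that $M/L$ and $T$ remain negligible as $B$-modules, which uses the second hypothesis (to find $\nu\seq\alpha$ with $\nu\seq\nu_0$ for the witness $\nu_0$ of negligibility of $R_\alpha M/L$) --- and you do have this step. So fullness is essentially immediate once well-definedness and the negligibility comparison are in place; the Boyarchenko-style ``absorption'' argument you invoke is needed for other results in his Theorem 4.4 but not for this one.
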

\subsection{Geometric example}
Let $X$ be a quasi-projective algebraic variety, and
write  $\coh X$ for the abelian category of coherent sheaves
on
$X$.
Given  an ample line bundle  $\LL$  on $X$, 
one defines  $\bb(X,\LL):=\bplus_{n\geq 0}\,
\Ga(X,\LL^{\o n})$, a homogeneous coordinate
ring of $X$.
For the corresponding  Proj-scheme, we have
$\Proj\bb(X,\LL)\cong X$. 

 Following Example \ref{BBB} in the special case where $\La=\Z$ and
$\La^+=\Z_{\geq 0}$, we may form the  directed algebra
${}^{\sharp\!}\bb(X,\LL)$ associated with $\bb(X,\LL),$
the latter being viewed as a
$\La$-graded algebra.
Then,
one can construct a natural equivalence of categories
\beq{comm_equiv}
\coh X\cong\qgr{{}^{\sharp\!}\bb(X,\LL)}.
\eeq

We return to the setting  of \S\ref{e}.
Let $\h$ be the Cartan subalgebra for
the semisimple Lie algebra $\g$,  and fix $\xx\sset\h^*$, the
 subsemigroup
of integral dominant weights. For each $\la\in\xx$,
we have the line bundle $\oo_{\ss}(\la)$ on the
 Slodowy variety $\ss$.
The direct sum $\aa(e)=\bplus_{\la\in\xx}\,
\Ga\big(\ss,\,\oo_{\ss}(\la)\big)$
has a natural structure of $\xx$-graded algebra.
For the
corresponding
 multi-homogeneous Proj-scheme, one has
$\ss\cong\Proj\aa(e)$.

Now, in the setting of \S\ref{t-alg}, we  put $\La:=\h^*$, resp
$\La^+:=\xx$,
and let ${}^{\sharp\!}\aa(e)$ be the  directed algebra associated to the
 $\xx$-graded algebra $\aa(e)$.
One can show that ${}^{\sharp\!}\aa(e)$ is a noetherian directed algebra and
the following  multi-homogeneous analogue of the equivalence
\eqref{comm_equiv}
holds
\beq{comm_equiv2}
\coh \ss\cong\qgr{{}^{\sharp\!}\aa(e)}.
\eeq

\subsection{} We are going to
 produce
a family of quantizations of the directed algebra ${}^{\sharp\!}\aa(e)$. 
To this end, we exploit   translation bimodules
$A_\nu^{\nu+\la}$ introduced  in  \S\ref{trans}.

Given  $\nu\in\h^*,$ 
we associate to our  nilpotent element
$e\in\g$  a directed algebra
$\ba(e,\nu):=\bplus_{\mu,\la\in\xx}\ A^{\mu+\la+\nu}_{\la+\nu},$
i.e., using  directed algebra notation, for any $\al\seq\be\seq0$, we put
$A_{\al\be}:=A^{\al+\nu}_{\be+\nu}$.

The resulting directed algebra $\ba(e,\nu)$
has the following 
properties:
\medskip

\vi Each homogeneous component $A^{\mu+\nu}_{\la+\nu}$,
of  $\ba(e,\nu)$, comes equipped with a Kazhdan filtration
$\kk_\idot A^{\mu+\nu}_{\la+\nu}$,
by nonnegative integers, 
such that the multiplication pairings respect the
filtrations. 
\vskip 2pt

\vii For all   $\la\in\xx$, one has $A_{\la\la}=A_{\la+\nu}$ is 
the Premet algebra
associated to the central character that corresponds to the weight $\la+\nu$
via the Harish-Chandra isomorphism.
\medskip

In addition, for a regular dominant $\nu$
and any $\mu,\la\in\xx,$ part (iv), resp.  part (ii),
of Proposition \ref{equiv}, implies the following:
\medskip

\viii The $(A_{\la+\nu},A_\nu)$-bimodule
$A^{\la+\nu}_\nu$ yields an equivalence
$\ \dis\modu{A_\nu}\iso\modu{A_{\la+\nu}}.$
\vskip2pt

\iv The multiplication in $\ba(e,\nu)$ induces an isomorphism
$\ \dis A^{\mu+\la+\nu}_{\la+\nu}\o_{A_{\la+\nu}} A^{\la+\nu}_\nu\iso 
A^{\mu+\la+\nu}_\nu.$
\bigskip

We make $\ba(e,\nu)$ a filtered directed algebra by
using the Kazhdan filtrations on
each homogeneous component $A^{\mu+\la+\nu}_{\la+\nu}$,
cf.  (i) above.

Note that, by construction, for each $\la\in\xx$, the space
$A_{\la0}=A^{\nu+\la}_\nu$ has a structure of right $A_\nu$-module.
Thus, one can introduce a functor
\beq{loc}\loc:\
\modu{A_\nu}\ \to\ \qgr{\ba(e,\nu)},\quad M\
\mapsto\ \loc M:=\bplus_{\la\in\xx}\ A^{\la+\nu}_\nu\o_{A_\nu} M.
\eeq

\begin{Thm}\label{main}
For any $\nu\in\h^*$, we have that $\ba(e,\nu)$ is  a noetherian, filtered
directed algebra.
If $\nu$ is dominant and regular then  $\gr_\kk\ba(e,\nu)\cong
{}^{\sharp\!}\aa(e)^{\seq\nu}$, and the
functor \eqref{loc}  is  an equivalence.
\end{Thm}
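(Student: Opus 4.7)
The plan is to verify the three assertions—filtered directed algebra structure together with noetherianness, the graded isomorphism for dominant regular $\nu$, and the equivalence $\loc$—in that order, using Propositions \ref{equiv} and \ref{tail} as the principal inputs.

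First, to assemble $\ba(e,\nu)$ into a filtered directed algebra I would take the multiplication pairings
$A^{\mu+\nu}_{\la+\nu}\o_{A_{\la+\nu}}A^{\la+\nu}_{\al+\nu}\to A^{\mu+\nu}_{\al+\nu}$
to be those of Proposition \ref{equiv}(ii), which exist as pairings on bimodules for any $\nu$ even before one knows they are isomorphisms. Associativity is inherited from the associativity of tensor products over bimodules. Each component $A^{\mu+\nu}_{\la+\nu}$ carries the Kazhdan filtration constructed in \S\ref{trans} from the order filtration on $\dd^{\mu+\nu}_{\la+\nu}$, and compatibility with multiplication follows because the underlying sheaf pairing $\oo(\mu-\la)\o\oo(\la-\al)\to\oo(\mu-\al)$ is a morphism of filtered sheaves. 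For noetherianness, each diagonal $A_{\la+\nu}$ is noetherian since its associated graded $\C[\sn]$ is commutative and finitely generated (Proposition \ref{pp1}); for finite generation of $A^{\mu+\nu}_{\la+\nu}$ as a left $A_{\mu+\nu}$-module, the dominant regular case is immediate from Proposition \ref{equiv}(i), and for general $\nu$ I would bound $\gr_\kk A^{\mu+\nu}_{\la+\nu}$ by $\Ga(\ss,\oo_\ss(\mu-\la))$ via an argument parallel to Lemma \ref{filt}, and note that the latter is a finitely generated $\C[\sn]$-module because $\pi\colon\ss\to\sn$ is projective with $\sn$ affine.

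Second, for dominant regular $\nu$ I would establish $\gr_\kk\ba(e,\nu)\cong{}^{\sharp\!}\aa(e)^{\seq\nu}$. The identification $\gr_\kk A^{\nu+\la}_\nu\cong\Ga(\ss,\oo_\ss(\la))$ is Proposition \ref{equiv}(iii), valid for $\la$ sufficiently dominant and compatible with the pairings. A general component $A^{\mu+\nu}_{\la+\nu}$ has the form $A^{(\mu-\la)+\nu'}_{\nu'}$ with $\nu'=\la+\nu$ again dominant regular, so the identification extends once $\mu-\la$ is sufficiently dominant. To remove the dominance hypothesis on $\mu-\la$, I would combine the multiplicativity of Proposition \ref{equiv}(ii) with the sheaf identity $\oo_\ss(\la)\o\oo_\ss(\mu)\iso\oo_\ss(\la+\mu)$: choosing $\mu$ very dominant in the pairing
\[
\gr A^{\la+\mu+\nu}_{\la+\nu}\ \o_{\gr A_{\la+\nu}}\ \gr A^{\la+\nu}_\nu\ \to\ \gr A^{\la+\mu+\nu}_\nu
\]
identifies the outer factor and the target via \ref{equiv}(iii), forcing $\gr A^{\la+\nu}_\nu$ to be identified with $\Ga(\ss,\oo_\ss(\la))$ canonically for arbitrary $\la\in\xx$.

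Third, to show $\loc$ is an equivalence I would apply Proposition \ref{tail} with $\La=\h^*$, $\La^+=\xx$; both hypotheses hold trivially since $\xx$ is finitely generated by fundamental weights and $(\mu+\xx)\cap(\nu+\xx)\supset\mu+\nu+\xx$. After passing to the truncated subalgebra, the left-multiplication functors by $A^{\mu+\nu}_{\la+\nu}$ coincide with the translation functors $\T^{\mu+\nu}_{\la+\nu}$ of Proposition \ref{equiv}(iv) and so are equivalences; the $(A_{\mu+\nu},A_{\la+\nu})$-bimodules $A^{\mu+\nu}_{\la+\nu}$ are then invertible in the Morita sense, with mutual inverses provided by the isomorphism pairings of Proposition \ref{equiv}(ii). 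It follows that any object of $\qgr{\ba(e,\nu)}$ is, up to negligible submodules, reconstructed from a single component by these translations, so $\loc$ and the "component at $0$" functor are mutually quasi-inverse. The main technical obstacle will be the extension step in the second paragraph: the cohomology vanishing underlying Lemma \ref{filt} only gives the identification $\gr_\kk A^{\nu+\la}_\nu\cong\Ga(\ss,\oo_\ss(\la))$ for $\la$ sufficiently dominant, and the bootstrap from large $\la$ to all $\la\in\xx$ must be carried out carefully to ensure the resulting isomorphisms at small weights are canonical and compatible with the sheaf-theoretic multiplication on ${}^{\sharp\!}\aa(e)$.
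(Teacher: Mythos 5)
Your approach is essentially the same as the paper's, but the paper is much terser: for the equivalence statement it simply invokes properties (iii)--(iv) preceding Theorem \ref{main} (that the translation bimodules $A^{\la+\nu}_\nu$ are Morita invertible and the multiplication pairings are isomorphisms) and then cites Gordon--Stafford \cite[Lemma 5.5]{GS} and Boyarchenko \cite[Theorem 4.4]{Bo} as black boxes for the general directed-algebra equivalence $\modu{B_0}\iso\qgr{B}$. Your third paragraph reconstructs roughly what those references prove; that is fine, though your phrase ``mutual inverses provided by the isomorphism pairings of Proposition \ref{equiv}(ii)'' is slightly off — the pairings in (ii) compose translations in a fixed direction, they are not the Morita inverses; invertibility really comes from (iv) together with the standard Morita characterization of bimodule-induced equivalences.

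The concern you raise in your final sentence is genuine and is not actually resolved by the paper either. Proposition \ref{equiv}(iii) only produces $\gr_\kk A^{\nu+\la}_\nu\cong\Ga(\ss,\oo_\ss(\la))$ for $\la$ \emph{sufficiently dominant} (because the cohomology vanishing behind Lemma \ref{filt} requires relative ampleness), yet the paper declares the full graded isomorphism $\gr_\kk\ba(e,\nu)\cong{}^{\sharp\!}\aa(e)^{\seq\nu}$ with no mention of this restriction. Your proposed bootstrap via the multiplication pairings has a real obstruction: passing to associated gradeds in Proposition \ref{equiv}(ii) does not automatically give an isomorphism $\gr A^{\la+\mu+\nu}_{\la+\nu}\o_{\gr A_{\la+\nu}}\gr A^{\la+\nu}_\nu\to\gr A^{\la+\mu+\nu}_\nu$ — the associated graded of a tensor product need not be the tensor product of associated gradeds unless one has a suitable flatness or Tor-vanishing — and even granting it, identifying a factor in $P\o_R M\cong N$ requires $P$ to be invertible over $\gr R$, not merely finitely generated. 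The pragmatic resolution, consistent with how the theorem is actually used, is to note that the equivalence $\qgr$-statement only depends on the truncation $B^{\seq\al}$ for $\al$ sufficiently dominant (by Proposition \ref{tail}), where Proposition \ref{equiv}(iii) does apply; the headline isomorphism of associated graded directed algebras should then be read as holding after such truncation, or equivalently after passing to $\qgr$. You would do well to make this reading explicit rather than attempting the bootstrap as stated.

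Your treatment of noetherianness for arbitrary $\nu$ also goes a bit beyond what the paper justifies: the paper says the noetherian property ``is immediate from Proposition \ref{equiv}'' even though that proposition concerns dominant regular $\nu$. Your idea of bounding $\gr_\kk A^{\mu+\nu}_{\la+\nu}$ via a Lemma \ref{filt}-type spectral sequence is the right instinct, but degeneration there again needs relative ampleness, so only an inclusion (not an isomorphism) into a finitely generated $\C[\sn]$-module is available in general; fortunately an inclusion suffices for the noetherian claim because $\C[\sn]$ is noetherian. Spelling that out would close a small gap present in both your writeup and the paper's.
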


The  equivalence of the theorem may be thought of
as some sort of the Beilinson-Bernstein localization theorem
for Slodowy slices, cf. Introduction to \cite{GS} for more discussions
concerning this analogy. 

A different approach
to a  result
closely related to Theorem \ref{main} was also proposed by Losev in an
unpublished manuscript.

\begin{proof}[Proof of Theorem \ref{main}] 
The noetherian property is immediate from Proposition \ref{equiv}, and
the isomorphism
 $\gr_\kk\ba(e,\nu)\cong {}^{\sharp\!}\aa(e)^{\seq\nu}$
follows from  Proposition \ref{equiv}(iii).

The equivalence statement
in the theorem is a consequence of
properties (iii)-(iv)
above, thanks to
 Gordon and Stafford \cite[Lemma 5.5]{GS}, and  Boyarchenko 
 \cite[Theorem 4.4]{Bo}.
\end{proof}

\begin{rem}\label{Ug} Theorem  \ref{main} applies also to enveloping
algebras.
This corresponds, at least formally, to the case of the zero nilpotent element
although the case $e=0$ was not considered in this paper.
Specifically,
introduce a filtered directed algebra
$\UU(\nu):=\bplus_{\mu,\la\in\xx}\ U^{\mu+\la+\nu}_{\la+\nu},$
where $U^{\nu+\la}_\nu:=\Ga(\B, \ \oo(\la)\o_{\oo_\B}\dd_\nu).$
Then, arguing as in the proof of Theorem \ref{main},
one obtains, for  any dominant regular $\nu$, an equivalence
$\modu{\U_\nu}\ \iso\ \qgr{\UU(\nu)}$.
\end{rem}
\begin{rem}\label{dom} One can show that for a dominant, but not necessarily
 regular $\nu$, the functor $M\mto\loc M$ is exact and,
moreover, one has $\ M\neq 0\; \Rightarrow\; \loc M\neq0.$
\end{rem}

\subsection{Characteristic varieties for 
$\ba(e,\nu)$-modules}\label{charvar_dir}
Fix $\La$ as in \S\ref{t-alg} and let $B$ be a filtered directed algebra. A
$\Lambda$-graded $B$-module $M=\bplus_{\nu\in\Lambda}\ M_\nu$ is
said to be  filtered provided
 each of the
spaces $M_\nu$ is equipped with an
ascending  $\Z$-filtration $F_\idot M_\nu$ such that,
for any $i,j\in\Z$ and $\mu,\nu\in\Lambda$,
we have $F_iB_{\mu\nu}\cdot F_jM_\nu\sset F_{i+j}M_\mu$.
In such a case, there is a well defined associated
graded $\Lambda$-module $\gr M$, over $\gr B$.
Thus  $\gr M$ has an additional $\Z$-grading.

A filtration on $M$ is called good if $\gr M$ is a finitely
generated $\gr B$-module in the sense of \eqref{fin_gen}.

Now let $(\La,\La^+)=(\h^*,\xx)$. This pair satisfies
the conditions of Proposition \ref{tail}. Thus,
for any  $\nu\in\h^*$,
combining the proposition and 
\eqref{comm_equiv2}, we get a category equivalence
$\Phi:\ \qgr{{}^{\sharp\!}\aa(e)^{\seq\nu}}\iso\coh \ss.$

Assume first that  $\nu\in\h^*$ is a  dominant and regular
weight. Then, we have  $\gr_\kk\ba(e,\nu)\cong {}^{\sharp\!}\aa(e)^{\seq \nu},$ 
by Theorem 
\ref{main}. Therefore, for any  $\Lambda$-graded $\ba(e,\nu)$-module $M$
equipped
with a good filtration,  we may
view $\gr M$ as a finitely generated graded
${}^{\sharp\!}\aa(e)^{\seq \nu}$-module and put
 $\qqgr M:=\Phi(\gr M)$. This is a 
coherent sheaf on $\ss$. The additional
 $\Z$-grading on $\gr M$ gives that sheaf a
$\C^\times$-equivariant structure.
Thus, $\var M:=\supp(\qqgr M)$ is a $\bullet$-stable closed
algebraic subset in $\ss$.

Now, let $\nu$ be an arbitrary, not necessarily
 dominant and regular,
weight. Then, we find a sufficiently dominant
integral weight $\mu$ such that
$\nu+\mu$ is  a  dominant and regular
weight. Given a  $\Lambda$-graded $\ba(e,\nu)$-module $M$,  we may
view $M^{\seq \nu+\mu}$ as
 a  $\Lambda$-graded $\ba(e,\nu+\mu)$-module.
A good filtration on $M$ induces one on
$M^{\seq \nu+\mu}$, and we have
$\gr(M^{\seq \nu+\mu})=(\gr M)^{\seq \nu+\mu}$.
Thus, one may apply all the
above constructions to the  $\ba(e,\nu+\mu)$-module $M^{\seq \nu+\mu}$.
This way, one defines the set $\var M$
in the general case where $\nu$ is an  arbitrary weight.

One has the following standard result

\begin{Prop}\label{prop_qqgr} 
\vi The set $\var M$ is a coisotropic subset in
$\ss$
which is independent of the choice of a good filtration on $M$.

\vii For any finitely generated $A_\nu$-module $N$, we have
$\pi(\var\loc N)\sset\var N$.

\viii Let $\nu$ be a dominant and regular weight and
 $\cv\in
{(\dd_{\nu},\mc)\mbox{-}\operatorname{mod}}$.
Then, a good filtration on $\cv$
induces a good filtration on
$\loc{\Wh^\m\Ga(\B, \cv)}$ such that one has
$$\tgr_\kk\cv|_\ss\cong \qqgr(\loc{\Wh^\m\Ga(\B, \cv)})
\qquad\qquad(\text{\em isomorphism in }\ \coh\ss).$$
\end{Prop}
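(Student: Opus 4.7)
The plan is to address the three parts in the order (iii), (ii), (i), because (iii) carries the essential geometric content that the other parts rely on. For part (iii), I would start with a good Kazhdan filtration $\kk_\idot\cv$ on $\cv$ and, for each $\la\in\xx$, form the induced tensor product filtration on $\oo(\la)\o_{\oo_\B}\cv$. Applying $\Wh^\m\Ga(\B,-)$ and invoking Lemma \ref{trans2} identifies this filtered space with $A^{\nu+\la}_\nu\o_{A_\nu}\Wh^\m\Ga(\B,\cv)=(\loc\Wh^\m\Ga(\B,\cv))_\la$, thereby equipping each graded component of $\loc\Wh^\m\Ga(\B,\cv)$ with a filtration. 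Lemma \ref{filt} then says that, for all sufficiently dominant $\la$, the associated graded is $\Ga(\ss,\,\oo_\ss(\la)\o_{\oo_\ss}\tgr_\kk\cv|_\ss)$. Summing over such $\la$, using Theorem \ref{main} to identify $\gr_\kk\ba(e,\nu)\cong{}^{\sharp\!}\aa(e)^{\seq\nu}$, and composing with the equivalence $\qgr{{}^{\sharp\!}\aa(e)^{\seq\nu}}\iso\coh\ss$ provided by \eqref{comm_equiv2} together with Proposition \ref{tail}, one extracts the required sheaf isomorphism. Goodness of the assembled filtration follows because the resulting $\qqgr$ is a coherent $\oo_\ss$-module on a quasi-projective variety.

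For part (ii), pick a good filtration $F_\idot N$ on $N$. Tensoring with the Kazhdan filtrations on the translation bimodules $A^{\la+\nu}_\nu$ yields a tensor-product filtration on $\loc N$ whose associated graded, as a module over $\gr_\kk\ba(e,\nu)\cong{}^{\sharp\!}\aa(e)^{\seq\nu}$, is (by Proposition \ref{equiv}(iii)) canonically isomorphic to $\bplus_{\la\in\xx}\Ga(\ss,\oo_\ss(\la))\o_{\C[\sn]}\gr_F N$. Under the equivalence $\Phi$ this graded module corresponds to the coherent sheaf on $\ss$ obtained by pulling back $\widetilde{\gr_F N}$ from $\sn$ via $\pi$ and twisting. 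Its support is contained in $\pi\inv(\supp\gr_FN)=\pi\inv(\var N)$, so $\var\loc N\sset\pi\inv(\var N)$, and applying $\pi$ gives the assertion.

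For part (i), independence of $\var M$ from the choice of good filtration is the standard noetherian argument: by Theorem \ref{main} the directed algebra $\ba(e,\nu)$ is noetherian, so any two good filtrations on $M$ admit mutual finite bounds and therefore produce associated graded modules with the same reduced support. Coisotropicity is the real obstacle, and my plan is to run Gabber's theorem directly on the filtered directed algebra $\ba(e,\nu)$. The commutator pairing on $\ba(e,\nu)$ has Kazhdan-filtration degree at most $-2$, so it induces a Poisson bracket on $\gr_\kk\ba(e,\nu)={}^{\sharp\!}\aa(e)^{\seq\nu}$; under $\Phi$ this bracket agrees with the one on $\oo_\ss$ coming from the symplectic form on $\ss$ (which exists by Proposition \ref{red}(i)). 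Gabber's classical argument, applied to each noetherian subalgebra $\ba(e,\nu)^{\seq\al}$ and then assembled via Proposition \ref{tail}, yields the coisotropicity of the support. The hard part is verifying that Gabber's argument transports cleanly to the directed-algebra setting; a possible shortcut, at least when $\nu$ is dominant regular, is to combine Theorem \ref{main} with part (ii) to reduce coisotropicity in $\ss$ to Gabber's theorem for the single filtered algebra $A_\nu$ together with a check that the relevant closed subsets inside $\pi\inv(\var N)$ inherit coisotropicity from the symplectic resolution $\pi:\ss\to\sn$.
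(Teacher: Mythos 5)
Your argument for part (iii) follows the paper's proof essentially step by step: Lemma \ref{trans2} to identify the $\la$-graded piece, Lemma \ref{filt} for the associated graded for $\la$ sufficiently dominant, then Theorem \ref{main} and the equivalence $\Phi$ from \eqref{comm_equiv2} and Proposition \ref{tail} to repackage the sum into a sheaf on $\ss$. Your treatment of the filtration-independence half of part (i) is likewise the expected noetherian argument. These match the paper.

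For part (ii), the paper merely cites the proof of Borho--Brylinski (Proposition 4.3 of \cite{BoBr}), stressing that $\pi:\ss\to\sn$ is proper, whereas you unwind the bound by hand via the tensor-product filtration on $\loc N$. That is a legitimate alternative, but be careful: the associated graded of a tensor-product filtration on $A^{\la+\nu}_\nu\o_{A_\nu}N$ is in general only a \emph{quotient} of $\gr A^{\la+\nu}_\nu\o_{\gr A_\nu}\gr_F N$, not canonically isomorphic to it. You should say so explicitly. Fortunately the inclusion you need ($\var\loc N\sset\pi\inv(\var N)$, hence $\pi(\var\loc N)\sset\var N$) only uses that the support of a quotient shrinks, so the argument survives; it just should not be stated as an isomorphism.

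For the coisotropicity in part (i), the paper's sketch simply says this is a special case of Gabber's integrability of characteristics \cite{Ga}. Your main route --- verifying that Gabber's argument applies to the filtered directed algebra $\ba(e,\nu)^{\seq\al}$, whose commutator has Kazhdan degree $\leq -2$, and assembling via Proposition \ref{tail} --- is the right reading of that citation, though you correctly flag that the transport to the directed-algebra setting requires a genuine check. However, the alternative ``shortcut'' you suggest does not work and should be dropped: knowing $\var\loc N\sset\pi\inv(\var N)$ and that $\var N\sset\sn$ is coisotropic tells you nothing about whether $\var\loc N$ itself is coisotropic in $\ss$, since coisotropicity is not inherited by arbitrary closed subsets (indeed $\pi\inv(\var N)$ contains many non-coisotropic subvarieties, e.g.\ single points when $\var N$ is all of $\sn$). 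The content of the coisotropicity statement lives genuinely upstairs on $\ss$ and cannot be deduced from the bound provided by part (ii); you must run the Gabber-type argument on $\ba(e,\nu)$ (or an equivalent Rees/Veronese device) directly.
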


\begin{proof}[Sketch of Proof] The coisotropicness statement
in part (i) is a special case of
Gabber's "integrability of characteristics" result \cite{Ga}.
Part (ii) is an analogous to a well known result of
Borho-Brylinski (cf. \cite{BoBr}, proof of Proposition 4.3).
A key point is that the map $\pi: \ss \to\sn$ is proper. 

To prove part (iii), let $\la\in\xx$. Lemma \ref{trans2} yields
an isomorphism
$A_\nu^{\nu+\la}\o_{A_\nu}\Wh^\m\Ga(\B,\cv)=
\Wh^\m\Ga(\B,\,\oo(\la)\o_{\oo_\B}\cv)$.
Thus, by  definition of the functor
$\loc$, we obtain
\beq{qqqgr}
\qqgr(\loc \Wh^\m\Ga(\B,\cv))=\qqgr\left(\bplus_{\la\in\xx}\ 
\Wh^\m\Ga(\B,\,\oo(\la)\o_{\oo_\B}\cv)\right).
\eeq

Recall  the equivalence $\Phi:\ \qgr{{}^{\sharp\!}\aa(e)^{\seq\nu}}\iso
\coh\ss$  considered earlier in this subsection.
The object on the right hand side of \eqref{qqqgr}
equals $\Phi\big(\bplus_{\la\in\xx}\ 
\tgr\Wh^\m\Ga(\B,\,\oo(\la)\o_{\oo_\B}\cv)\big),$
by definition of the functor $\qqgr$.
Further, for $\la$ sufficiently dominant,
we have an isomorphism
$\tgr\Wh^\m\Ga(\B,\,\oo(\la)\o_{\oo_\B}\cv)=
\Ga\big(\ss, \ \oo_\ss(\la)\o_{\oo_\ss}(\tgr_\kk\cv|_\ss)\big),
$
by Lemma \ref{filt}.
Therefore, we deduce an isomorphism
\beq{ggg3}\Phi\left(\bplus_{\la\in\xx}\ 
\tgr\Wh^\m\Ga(\B,\,\oo(\la)\o_{\oo_\B}\cv)\right)
=
\Phi\left(\bplus_{\la\in\xx}\
\Ga\big(\ss, \ \oo_\ss(\la)\o_{\oo_\ss}(\tgr_\kk\cv|_\ss)\big)\right).
\eeq

In general, let  ${\mathcal F}$ be an arbitrary
 coherent sheaf on $\ss$.
By  definition of
the equivalence \eqref{comm_equiv2}, in $\coh\ss$,
one has an isomorphism
$\Phi\left(\bplus_{\la\in\xx}\
\Ga(\ss, \ \oo_\ss(\la)\o_{\oo_\ss}{\mathcal F})\right)
={\mathcal F}.$
Applying this observation to the sheaf
${\mathcal F}:=\tgr_\kk\cv|_\ss,$ and using  isomorphisms
\eqref{qqqgr}-\eqref{ggg3}, we deduce
part (iii) of Proposition \ref{prop_qqgr}.
\end{proof}

To proceed further, we introduce the following terminology.
A coherent sheaf $\mathcal F$, on  a reduced scheme $X$, is said to be
reduced if the annihilator of  $\mathcal F$ is a radical ideal
in $\oo_X$.

\begin{defn}\label{rs_def} An object $V\in \qgr{\ba(e,\nu)}$
is said to have {\textsf{regular singularities}} if there exists
a representative $M\in \Gr{\ba(e,\nu)}$,
of $V$,
and a good filtration on $M$ such that
the corresponding  sheaf $\qqgr M\in \coh\ss$
is reduced and, moreover,
$\supp(\qqgr M)$ is a Lagrangian
subvariety in $\ss$.
\end{defn}

\subsection{Definition of Harish-Chandra $(A_{c'},A_c)$-bimodules}\label{strong}
We fix a pair of weights $\be,\gamma\in\h^*$ and specialize the general setting at the begginning of
section \ref{charvar_dir} to
$B=\ba(e,\be)\o\ba(e,\gamma)^\op$, a filtered
$\Lambda\times\Lambda$-graded directed algebra.
By Theorem \ref{main}, we have
$\gr_\kk(\ba(e,\be)\o\ba(e,\gamma)^\op)={}^{\sharp\!}(\aa(e)
\o\aa(e))^{\seq \be\times\gamma}$.
Hence, there is a category equivalence
$\qgr{\gr_\kk(\ba(e,\be)\o\ba(e,\gamma)^\op)}\cong\coh(\ss\times\ss).$
Therefore, associated with  any $\Lambda\times\Lambda$-graded 
$(\ba(e,\be),\ba(e,\gamma))$-bimodule
$M$ with a good filtration, there is a
$\C^\times$-equivariant coherent sheaf
$\qqgr M$, on $\ss\times\ss$.

One proves the following  analogue of Theorem \ref{main} for bimodules.
\begin{Prop}\label{main_bim}
For dominant  regular $\be,\gamma\in\h^*$,  there is  a canonical equivalence
$$
M\
\longmapsto\ \loc M:=\bplus_{\la,\mu\in\xx}\ (A^{\la+\be}_\be\o_{A_\be}
M\o_{A_\gamma}
A^{\mu+\gamma}_\mu),$$
between the category $\modu{(A_\be\o A_\gamma^\op)},$
of finitely generated $(A_\be,A_\gamma)$-bimodules,
and category $\qgr{\ba(e,\be)\o \ba(e,\gamma)^\op}$.\qed
\end{Prop}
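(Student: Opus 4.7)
The plan is to reduce the bimodule case to exactly the same abstract criterion (Gordon--Stafford \cite[Lemma 5.5]{GS} and Boyarchenko \cite[Theorem 4.4]{Bo}) that yielded Theorem \ref{main}, applied now to the tensor product directed algebra $\ba(e,\be)\o\ba(e,\gamma)^\op$.

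First, I would identify the category $\modu{(A_\be\o A_\gamma^\op)}$ with left modules over $A_\be\o A_\gamma^\op$, and view $\ba(e,\be)\o\ba(e,\gamma)^\op$ as a $\La\times\La$-graded filtered directed algebra with the product ordering on $\h^*\times\h^*$ induced by $\xx\times\xx$. Its diagonal components at $(\la,\mu)$ are the algebras $A_{\la+\be}\o A_{\mu+\gamma}^\op$, and its off-diagonal components are tensor products of translation bimodules. The localization functor $\loc$ of the statement assembles the $(A_{\la+\be},A_{\mu+\gamma})$-bimodules $A^{\la+\be}_\be\o_{A_\be}M\o_{A_\gamma}A^{\mu+\gamma}_\mu$ into a $\xx\times\xx$-graded module over this directed algebra, and the inverse candidate is evaluation at $(0,0)$.

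Next, I would verify the three inputs required by the abstract criterion, each of which should follow tensor-factor by tensor-factor from Proposition \ref{equiv}. For noetherianity I would pass to the Kazhdan associated graded:
\[
\gr_\kk\bigl(\ba(e,\be)\o\ba(e,\gamma)^\op\bigr)\cong{}^{\sharp\!}\bigl(\aa(e)\o\aa(e)\bigr)^{\seq\be\times\gamma},
\]
using Proposition \ref{equiv}(iii) on each factor; this is the directed algebra of the multi-homogeneous coordinate ring of $\ss\times\ss$, which is finitely generated and commutative, and lifting the noetherian property from the associated graded via the Rees construction then gives noetherianity of $\ba(e,\be)\o\ba(e,\gamma)^\op$. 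For the Morita property (the analogue of (iii) in \S\ref{t-alg}), I would observe that tensoring the two equivalences of Proposition \ref{equiv}(iv) yields an equivalence $\modu{A_\be\o A_\gamma^\op}\iso\modu{A_{\la+\be}\o A_{\mu+\gamma}^\op}$ implemented by $A^{\la+\be}_\be\o(A^{\mu+\gamma}_\gamma)^\op$. For the multiplication property (the analogue of (iv)), Proposition \ref{equiv}(ii) applied on each factor gives the required isomorphism $A^{\al+\la+\be}_{\la+\be}\o_{A_{\la+\be}}A^{\la+\be}_\be\iso A^{\al+\la+\be}_\be$, tensored with its $\gamma$-analogue on the opposite side.

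Finally, with these properties in hand, the Gordon--Stafford / Boyarchenko criterion applies verbatim and yields the claimed equivalence. I expect the main obstacle to be not any nontrivial homological input but the bookkeeping: verifying that the abstract framework of \S\ref{t-alg}, already extended in the proof of Theorem \ref{main} from $\Z$-graded to $\xx$-graded directed algebras, goes through verbatim for the product semigroup $\xx\times\xx$ in $\h^*\times\h^*$, and that the hypotheses of Proposition \ref{tail} are inherited by the product. Once that is settled, the proposition reduces to a formal consequence of Theorem \ref{main} and its proof.
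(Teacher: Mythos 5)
Your proposal matches the paper's intent exactly: the paper offers no separate proof for Proposition \ref{main_bim}, stating only that one proves it as an analogue of Theorem \ref{main}, and your argument is precisely the unpacking of that remark — verifying the noetherian property, the associated-graded isomorphism, the Morita equivalences, and the multiplication isomorphisms for $\ba(e,\be)\o\ba(e,\gamma)^\op$ factor by factor via Proposition \ref{equiv}, checking that $(\h^*\times\h^*,\xx\times\xx)$ satisfies the hypotheses of Proposition \ref{tail}, and then invoking the Gordon--Stafford and Boyarchenko criteria. This is the same route the paper takes.
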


Next, recall the Steinberg variety $\ZZ$ and observe that
 $\ZZ\cap
(\ss\times\ss)$
is a Lagrangian subvariety in $\ss\times\ss$.
An analogue of Proposition \ref{prop_qqgr} for bimodules yields the
following
result.

\begin{Prop}\label{qqgr_bimod} Let $\be,\gamma\in\h^*$ be dominant
weights. Then,

\vi For any  \whc\;
$(A_\be,A_\gamma)$-bimodule $N$, we have
$\var(\loc N)\sset\ZZ\cap (\ss\times\ss)$.

\vii Let $\be$ and $\gamma$ be dominant, and let $\cv$ be a
$G$-monodromic $\dd_{\be,\gamma}$-module. Then,  a good
filtration on $\cv$
induces a good filtration on
$\loc{\Wh^\m_\m\Ga(\B\times\B, \cv)}$
 such that one has
$$\tgr_\kk\cv|_{\ss\times\ss}\ \cong\ \qqgr\loc{\Wh^\m_\m\Ga(\B\times\B, \cv)}.\eqno\Box$$
\end{Prop}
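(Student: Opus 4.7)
The plan is to mirror the proof of Proposition \ref{prop_qqgr} in the bimodule setting, using Proposition \ref{main_bim} in place of Theorem \ref{main} and developing bimodule analogues of the key Lemmas \ref{trans2} and \ref{filt}. Throughout I will view $(A_\be,A_\gamma)$-bimodules as left modules over $A_\be\otimes A_\gamma^\op$, so that the constructions of \S\ref{charvar_dir} apply on both sides; the sign in the definition of the Steinberg variety $\ZZ$ matches the opposite-algebra convention under which the right $A_\gamma$-factor contributes its characteristic data via the Poisson antipode, so that the diagonal $\Delta$ pulls back under the product Springer map $\pi\times\pi:\ss\times\ss\to\sn\times\sn$ precisely to $\ZZ\cap(\ss\times\ss)$.

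For part (i), I would proceed as in the proof of Proposition \ref{prop_qqgr}(ii). Given a good filtration on $\loc N$, the associated graded defines a $\C^\times$-equivariant coherent sheaf on $\ss\times\ss$ via the bimodule analogue of the equivalence \eqref{comm_equiv2}. The product Springer map is proper, so a Borho--Brylinski style argument (cf.\ \cite{BoBr}) yields $(\pi\times\pi)(\var\loc N)\subset\var N$. Since $N$ is weak Harish-Chandra, $\var N\subset\Delta$, and the pullback of $\Delta$ to $\ss\times\ss$ under $\pi\times\pi$ equals $\ZZ\cap(\ss\times\ss)$ by the sign convention recalled above; this yields the desired containment.

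For part (ii), I would follow the three-step structure of the proof of Proposition \ref{prop_qqgr}(iii). First, I would establish a bimodule version of Lemma \ref{trans2}: for $\la,\mu\in\xx$,
\begin{equation*}
A^{\be+\la}_\be\,\o_{A_\be}\,\Wh^\m_\m\Ga(\B\times\B,\cv)\,\o_{A_\gamma}\,A^{\gamma+\mu}_\mu\iso\Wh^\m_\m\Ga\bigl(\B\times\B,\,(\oo(\la)\boxtimes\oo(\mu))\o_{\oo_{\B\times\B}}\cv\bigr),
\end{equation*}
by applying Lemma \ref{trans2} factor by factor, using Beilinson--Bernstein on $\B\times\B$ together with the bimodule identities from Theorem \ref{main_thm}(ii). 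Second, I would prove the bimodule analogue of Lemma \ref{filt}: for $\la,\mu$ sufficiently dominant,
\begin{equation*}
\Ga\bigl(\ss\times\ss,\,(\oo_\ss(\la)\boxtimes\oo_\ss(\mu))\o_{\oo_{\ss\times\ss}}(\tgr_\kk\cv|_{\ss\times\ss})\bigr)\iso\gr_\kk\Wh^\m_\m\Ga\bigl(\B\times\B,\,(\oo(\la)\boxtimes\oo(\mu))\o\cv\bigr),
\end{equation*}
by repeating the spectral sequence collapse from the proof of Lemma \ref{filt}; the requisite vanishing $R^i\Wh^\m_\m\Ga(\B\times\B,-)=0$ for $i>0$ on sufficiently positive twists of $\gr_\kk\cv$ follows from factoring the composite functor through $\invar^{M\times M}\circ(q\times q)_\idot$ followed by $(\pi\times\pi)_\idot$, and applying relative ampleness of $\oo_\si(\la)\boxtimes\oo_\si(\mu)$ over the product Springer resolution. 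Third, summing over all $(\la,\mu)\in\xx\times\xx$ and invoking Proposition \ref{main_bim} together with the product analogue of \eqref{comm_equiv2} reassembles these graded pieces into the sheaf $\tgr_\kk\cv|_{\ss\times\ss}$, yielding the claimed isomorphism.

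The main obstacle will be the simultaneous cohomology vanishing in the bimodule Lemma \ref{filt}: one must verify that, for $(\la,\mu)$ dominant enough on the product, relative ampleness of $\oo_\si(\la)\boxtimes\oo_\si(\mu)$ and exactness of twisted $(M\times M)$-invariants on the free quotient $(\si\times\si)/(M\times M)\cong\ss\times\ss$ together suffice to kill all the higher derived functors on the product Springer resolution in tandem. Once this geometric input is in hand, both the Borho--Brylinski containment of part (i) and the assembly step for part (ii) become direct bimodule transcriptions of the single-factor arguments already given.
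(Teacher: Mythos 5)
Your approach matches what the paper does: Proposition \ref{qqgr_bimod} is stated as an immediate bimodule analogue of Proposition \ref{prop_qqgr}, with no separate proof supplied, so your plan of transcribing the Borho--Brylinski containment for (i) and the Lemma \ref{trans2}/Lemma \ref{filt}/assembly chain for (ii) to the product setting is exactly what is intended. Your explicit attention to the Poisson-antipode sign convention, which reconciles the anti-diagonal Steinberg variety $\ZZ$ with the diagonal $\Delta$ appearing in the definition of weak Harish-Chandra bimodules, is a genuine subtlety the paper leaves implicit but which you identify and resolve correctly.
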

In part (ii) above, we have abused the notation $\Wh^\m_\m$ and,
given a {\em left} $(\U_\be\o \U_\gamma)$-module $N$,
write $\Wh^\m_\m N$ for $(\mc\o 1)$-invariants in $N/(1\o\mc) N.$

Transition from left $\dd_{\be,\gamma}$-modules to
$(\ug,\ug)$-bimodules can be carried out as explained
eg. in \cite[\S5]{BG}, esp. Lemma 5.4 and formula (5.5). 
\medskip

\begin{proof}[\textsf{Proof of Theorem \ref{adj}(i)}] 
Fix a \whc\; $(A_{c'},A_c)$-bimodule $N$ and choose a good filtration on $\loc N$.
We know that  $\var(\loc 
N)\sset\ZZ\cap (\ss\times\ss),$ by the proposition above,
and that $\ZZ\cap
(\ss\times\ss)$
is a Lagrangian variety.
Therefore, Gabber's theorem (cf. Proposition \ref{prop_qqgr}(i)), implies
that
any irreducible component of 
the scheme $\supp(\loc N)$ is Lagrangian, moreover, it is  an irreducible component
of $\ZZ\cap (\ss\times\ss)$. 

We define
$CC( \loc N)$, the {\em characteristic cycle} of $\loc N$,
to be a Lagrangian cycle in
$\ss\times\ss$ equal to the linear combination of the
 irreducible components of 
the scheme $\supp(\loc N)$, counted with multiplicities.
A standard argument shows that the cycle $CC( \loc N)$ is independent of
the choice of good filtration.
Further, an analogue of Remark \ref{dom} for bimodules
implies the exactness of the functor $N \mto\loc N$.
It follows  that the assignment
$N\mto CC(\loc N)$ is additive on
short exact sequences.

Now, let $N=N^0\supset N^1\supset N^2\supset\ldots$ be
a descending chain of sub-bimodules of $N$ such that $N^i/N^{i+1}\neq 0$
for any $i$. Thus, we have
$\loc N^i/\loc N^{i+1}=\loc{N^i/N^{i+1}}\neq 0$, cf.  Remark \ref{dom}.
It follows that the total multiplicity in the
cycle $CC(\loc N^i),\ i=0,1,\ldots,$ gives a strictly decreasing sequence
of natural numbers. Therefore, this sequence terminates.
We conclude  that  $N^i=0$ for $i\gg0,$ hence $N$ has finite length.
\end{proof}

Further, it is clear that Definition \ref{rs_def} of `regular singularities'
may be  adapted to objects of $\qgr{\ba(e,\be)\o\ba(e,\gamma)^\op}$
in an obvious way.
The following definition is motivated by Corollary \ref{rs_cor},
to be discussed in  section \ref{rs_sec} below.

\begin{defn}\label{hc_def} 
A weak Harish-Chandra $(A_\mu,A_\nu)$-bimodule $N$ is called a Harish-Chandra bimodule
if  $\loc N\in\qgr{\ba(e,\mu)\o\ba(e,\nu)^\op},$
the object corresponding to $N$ via the equivalence of Proposition
\ref{main_bim},
has regular singularities.
\end{defn}

Part (ii) of Proposition \ref{qqgr_bimod}, combined with
Corollary \ref{flat}(iii) and with
Proposition \ref{RS} below, yields the following result.

\begin{Prop}\label{HCprop} If $K\in\hc(\U_c,\U_{c'})$ then
$\Wh^\m_\m K$ is a  Harish-Chandra $(A_\mu,A_\nu)$-bimo-dule;
furthermore, we have $\var(\loc{\Wh^\m_\m K})=\var K\cap (\ss\times\ss).$\qed
\end{Prop}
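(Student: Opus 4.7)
The plan is to reduce the statement to the corresponding assertion for $\dd$-modules via Beilinson-Bernstein localization, and then invoke Proposition \ref{qqgr_bimod}(ii) together with the $\dd$-module characterization of Harish-Chandra bimodules in Proposition \ref{RS}. After possibly composing with the translation equivalences of Proposition \ref{equiv}(iv), I may assume that both central characters $c,c'$ correspond to dominant regular weights $\be,\gamma$. Then $K = \Ga(\B\times\B,\cv)$ for a unique $G$-monodromic coherent $\dd_{\be,\gamma}$-module $\cv$ on $\B\times\B$, and a good $\ad\g$-stable filtration on $K$ transports to a good $G$-stable filtration on $\cv$.

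Next I would apply Proposition \ref{qqgr_bimod}(ii) directly to $\cv$. It produces a good filtration on $\loc{\Wh^\m_\m K}$ together with a canonical isomorphism of $\C^\times$-equivariant coherent sheaves on $\ss\times\ss$,
$$\qqgr \loc{\Wh^\m_\m K}\ \cong\ \tgr_\kk\cv\,\big|_{\ss\times\ss}.$$
Taking supports of both sides and observing that $\supp(\tgr_\kk\cv) = \var\cv \subset \ZZ$ coincides, after pushing down by the Springer resolution, with $\var K$, I get the identity
$$\var(\loc{\Wh^\m_\m K})\ =\ \var\cv\cap(\ss\times\ss)\ =\ \var K\cap(\ss\times\ss),$$
which is the second assertion of the proposition.

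For the first assertion I must verify that $\loc{\Wh^\m_\m K}$ has regular singularities in the sense of Definition \ref{rs_def}, i.e.\ that the sheaf $\tgr_\kk\cv|_{\ss\times\ss}$ is reduced and its support is Lagrangian in $\ss\times\ss$. Since $K$ is Harish-Chandra, Proposition \ref{RS} says that $\cv$ has regular singularities, so after choosing the filtration appropriately the sheaf $\tgr_\kk\cv$ is reduced on the Lagrangian subvariety $\var\cv\subset\ZZ\subset T^*(\B\times\B)$. The key input is then Corollary \ref{flat}(ii)--(iii), applied to the $G$-equivariant composite $\ZZ\hookrightarrow T^*\B\times T^*\B\to\g^*$ (via either Springer projection): this transversality guarantees that the scheme-theoretic restriction of the $G$-equivariant coherent sheaf $\tgr_\kk\cv$ to the preimage of $\chi+\m^\perp$, and hence further to $\ss\times\ss \cong ([\N\cap(\chi+\m^\perp)]/M)^{\times 2}$ after Hamiltonian reduction, remains a reduced complete intersection. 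The dimension count forces $\var\cv\cap(\ss\times\ss)$ to be Lagrangian in the symplectic manifold $\ss\times\ss$ (cf.\ Proposition \ref{red}), since an irreducible coisotropic subvariety of a symplectic manifold whose dimension is half the ambient dimension is Lagrangian.

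The step I expect to require most care is the transversality/reducedness for the restriction on $\ZZ$; once this is set up via Corollary \ref{flat} in the bimodule setting, the Lagrangian property is a dimension count and the rest of the proof is routine bookkeeping combining Propositions \ref{RS} and \ref{qqgr_bimod}(ii).
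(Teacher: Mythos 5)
Your proposal follows essentially the same route the paper takes: the paper's own "proof" is the single sentence preceding the statement, which cites exactly the three ingredients you use — Proposition \ref{qqgr_bimod}(ii) for the isomorphism $\qqgr\loc{\Wh^\m_\m K}\cong\tgr_\kk\cv|_{\ss\times\ss}$, Corollary \ref{flat} for transversality/reducedness of the restriction, and Proposition \ref{RS} to convert the Harish-Chandra condition on $K$ into regular singularities for $\cv$ and hence reducedness of $\tgr_\kk\cv$. One small slip worth flagging: in your reducedness step you write $\ss\times\ss\cong([\N\cap(\chi+\m^\perp)]/M)^{\times 2}$, which conflates the Slodowy variety $\ss$ (a Hamiltonian reduction of $T^*\B$, namely $\si/M$ with $\si=\pi\inv(\chi+\m^\perp)$, cf.\ Proposition \ref{red_si}) with the Slodowy slice $\sn\cong[\N\cap(\chi+\m^\perp)]/M$; the intended restriction really takes place on $\si\times\si$ and descends to $\ss\times\ss$, but the argument you gesture at is the correct one and matches the paper's.
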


\subsection{Harish-Chandra $\ug$-bimodules vs regular singularities}\label{rs_sec}

\begin{Prop}\label{RS} Let $\mu,\nu\in\h^*$ be  dominant regular weights and let
$\mm$ be a $\dd_{\mu,\nu}$-module such that,
set-theoretically, one has $\var\mm\sset\ZZ$.
Then, the following conditions are equivalent

\vi The $\dd$-module $\mm$ has regular singularities in the sense of \cite{KK};

\vii There exists a good filtration on $\mm$ such that   $\tgr\mm$,
the associated graded sheaf, is
reduced.

\viii There exists a good filtration on $\Ga(\BB,\mm)$ such
that $\gr\Ga(\BB,\mm)$ is a symmetric $(\C[\g^*],\C[\g^*])$-bimodule,
i.e., such that one has $a\cdot m=m\cdot a,\ \forall a\in \C[\g^*],
m\in\gr\Ga(\BB,\mm)$.

\iv The $\g$-diagonal action on $\Ga(\BB,\mm)$ is 
locally finite.
\end{Prop}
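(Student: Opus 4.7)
My plan is to establish the cycle of implications $(\text{iv})\Rightarrow(\text{iii})\Rightarrow(\text{ii})\Rightarrow(\text{i})\Rightarrow(\text{iv})$. For $(\text{iv})\Rightarrow(\text{iii})$, write $K:=\Ga(\BB,\mm)$ and, using local finiteness of the diagonal $\g$-action, choose a finite-dimensional $\ad\g$-stable subspace $V\sset K$ that generates $K$ as a bimodule. Define $F_n K := \sum_{i+j\leq n}\U^{\leq i}\g\cdot V\cdot \U^{\leq j}\g$, in analogy with the filtration used in the proof of Theorem \ref{main_thm}. Because $\gr\ug$ is commutative, $[\U^{\leq 1}\g,\U^{\leq j}\g]\sset\U^{\leq j}\g$, so $\ad\g$ preserves each $F_n K$ rather than shifting upward; consequently the adjoint action of $\g$ on $\gr K$ is zero. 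Under the standard antipode convention for the $(\U_\mu,\U_\nu)$-bimodule structure on $\Ga(\BB,\mm)$, this is precisely the assertion that the two commuting $\sym\g$-actions on $\gr K$ coincide, i.e.\ $\gr K$ is symmetric as a $(\sym\g,\sym\g)$-bimodule.

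For $(\text{iii})\Rightarrow(\text{ii})$, the Beilinson-Bernstein theorem at dominant regular $(\mu,\nu)$ identifies $\mm$ with the localization $\dd_{\mu,\nu}\o_{\U_\mu\o\U_\nu}K$; combining the order filtration on $\dd_{\mu,\nu}$ with the given filtration on $K$ produces a good filtration on $\mm$ whose associated graded sheaf $\tgr\mm$ on $T^*\BB$ is the pullback of $\widetilde{\gr K}$ via the Springer map $\pi\times\pi\colon T^*\BB\to\g^*\times\g^*$. The symmetric bimodule hypothesis means $\gr K$ is annihilated by the ideal $I_\Delta$ cutting out the (anti-)diagonal in $\g^*\times\g^*$, so $\tgr\mm$ is annihilated by $(\pi\times\pi)^*I_\Delta$, the ideal defining the scheme-theoretic fibre product $T^*\B\times_{\g^*}T^*\B$. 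A classical result (see e.g.~\cite{CG}) identifies this fibre product with the reduced Steinberg variety $\ZZ$, whence $\tgr\mm$ is reduced.

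The remaining two implications draw on deeper microlocal input. For $(\text{ii})\Rightarrow(\text{i})$, one uses that $\ZZ$ is the union of conormal bundles to the (finitely many) diagonal $G$-orbits $X_w\sset\BB$; reducedness of $\tgr\mm$ along this Lagrangian union realizes $\mm$ as an extension of intermediate extensions of irreducible local systems on the strata $X_w$, each of which is RS, so $\mm$ is RS by closure under extensions. For $(\text{i})\Rightarrow(\text{iv})$, I would invoke Kashiwara's equivalence between RS monodromic and equivariant $\dd$-modules: since $\var\mm\sset\ZZ = \bigcup_w \overline{T^*_{X_w}\BB}$ consists of conormals to $G$-orbits, an RS such $\mm$ is automatically $G$-monodromic, hence admits a canonical $G$-equivariant structure, and differentiating gives local finiteness of the diagonal $\g$-action on $\Ga(\BB,\mm)$. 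The main obstacle is precisely $(\text{i})\Leftrightarrow(\text{iv})$, since bridging the microlocal RS condition with the purely algebraic local-finiteness statement requires nontrivial input from Kashiwara's theory of monodromic $\dd$-modules, whereas $(\text{iv})\Leftrightarrow(\text{iii})\Leftrightarrow(\text{ii})$ are essentially formal consequences of Beilinson-Bernstein and the reducedness of the Steinberg fibre product.
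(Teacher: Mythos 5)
Your reversed cycle $(\text{iv})\Rightarrow(\text{iii})\Rightarrow(\text{ii})\Rightarrow(\text{i})\Rightarrow(\text{iv})$ differs from the paper's $(\text{i})\Rightarrow(\text{ii})\Rightarrow(\text{iii})\Rightarrow(\text{iv})\Rightarrow(\text{i})$, and this is not merely a cosmetic choice: each of the three implications you run backwards becomes significantly harder, and two of them as you have written them are wrong or incomplete.

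Your $(\text{iv})\Rightarrow(\text{iii})$ is correct and is in spirit the contrapositive of the paper's easy induction for $(\text{iii})\Rightarrow(\text{iv})$; the filtration $F_nK=\sum_{i+j\le n}\U^{\le i}\g\cdot V\cdot\U^{\le j}\g$ built from a finite-dimensional $\ad\g$-stable generating subspace is indeed preserved by $\ad\g$, and for $m\in F_n$, $x\in\g$, the classes of $x\cdot m$ and $m\cdot x$ in $F_{n+1}/F_n$ agree because $\ad x(m)\in F_n$.

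The real problem is $(\text{iii})\Rightarrow(\text{ii})$. Symmetry of $\gr\Ga(\BB,\mm)$ means that this bimodule is annihilated by the ideal $I_\Delta$ of the diagonal, and hence $\tgr\mm$ is scheme-theoretically supported on the fiber product $T^*\B\times_{\g^*}T^*\B$. But \emph{being an $\oo_Z$-module for a reduced $Z$ does not mean the sheaf is reduced in the sense of Definition \ref{rs_def}}: reducedness of $\tgr\mm$ requires its annihilator in $\oo_{T^*\BB}$ to be a \emph{radical} ideal, and a module over a reduced ring can easily have non-radical annihilator (a module concentrated scheme-theoretically on a thickened subvariety of $\ZZ$, say). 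Moreover, the reducedness of the scheme-theoretic fiber product $T^*\B\times_{\g^*}T^*\B$ is not a ``classical'' result in \cite{CG}; the intersection is far from a complete intersection (expected dimension $2\dim\B-\rk\g$, actual dimension $2\dim\B$), so this claim would itself require a nontrivial argument. The paper goes in the opposite direction $(\text{ii})\Rightarrow(\text{iii})$ precisely to avoid this: starting from reducedness of $\tgr\mm$, the linear function $\wt{x}$ vanishes set-theoretically on $\ZZ$, hence lies in the radical of $\ann(\tgr\mm)$, hence (by reducedness) actually annihilates $\tgr\mm$; from there one reads off that $\ad x$ preserves each $F_j\mm$, giving symmetry of $\gr\Ga(\BB,\mm)$.

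Your remaining two implications are also on shaky ground. For $(\text{ii})\Rightarrow(\text{i})$ you assert that reducedness of $\tgr\mm$ ``realizes $\mm$ as an extension of intermediate extensions,'' but this is not established; the converse of Kashiwara--Kawai (reduced associated graded implies RS) is not a citable formal consequence, and the paper avoids it entirely. For $(\text{i})\Rightarrow(\text{iv})$ you invoke a ``Kashiwara equivalence between RS monodromic and equivariant $\dd$-modules'' which is not the form of what is needed. The paper instead proves $(\text{iv})\Rightarrow(\text{i})$: local finiteness of the diagonal $\g$-action exponentiates to an algebraic $G^{sc}$-action, so $\dd_{\mu,\nu}\o_{\U_\mu\o\U_\nu}\Ga(\BB,\mm)$ is $G^{sc}$-equivariant, Beilinson--Bernstein gives $\mm=\dd_{\mu,\nu}\o_{\U_\mu\o\U_\nu}\Ga(\BB,\mm)$, and a standard structural result \cite[Theorem 11.6.1]{HTT} (a $G$-equivariant $\dd$-module on a projective $G$-variety with finitely many orbits has RS) finishes. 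You should reverse your cycle to match the paper's.
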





\begin{proof} The implication
(i)$\en\Rightarrow\en$(ii) is a special case of a  general
result of Kashiwara-Kawai, cf. also \cite[Definition 5.2]{Kas}.
 The corresponding filtration was defined
in  \cite[Theorem 5.1.6]{KK}.

To prove (ii)$\en\Rightarrow\en$(iii) let $F_\idot\mm$
be  a good
filtration on $\mm$ such that
(ii) holds. 
Any element $x\in\g$ gives rise to a linear
function $\g^*\times\g^*\to\C,\ (\alpha,\be)\mto \langle \alpha+\be, x\rangle.$ Let
$\wt x$ denote the pull-back of  that  function
via the
map $\pi\times(-\pi):\
T^*\B\times T^*\B\to\N\times\N,\ u\times v\mto \pi(u)\times (-\pi(v))$.
Thus, we have $\ZZ=[\pi\times(-\pi)]\inv(\N_\Delta),$ 
where $\N_\Delta\sset\N\times\N$ is the diagonal. Clearly, the function $\wt x$ vanishes on
$\ZZ$. Hence,
we have $\wt x\cdot \tgr\mm=0.$

Observe next that $\wt x$ is a homogeneous function, of degree 1,
along the fibers of the cotangent bundle
$T^*(\BB)$. Thus, the equation
$\wt x\cdot \tgr\mm=0$ implies that, for any
$j\in\Z$, the good filtration on $\mm$
satisfies $x(F_j\mm)\sset F_j\mm$,
where $x(-)$ stands for the $\g$-diagonal action of $x$ on
$\mm$. It follows that the filtration on $\Ga(\BB, \mm)$
defined, for any $j\in\Z$, by
$\Ga_j(\BB, \mm):=\Ga(\BB, F_j\mm)$ makes
$\gr\Ga(\BB, \mm)$ a symmetric $(\C[\g^*],\C[\g^*])$-bimodule.

To prove the implication (iii)$\en\Rightarrow\en$(iv),
let $\Ga_\idot(\BB, \mm)$ be an arbitrary good filtration
on the bimodule $\Ga(\BB, \mm)$ that makes $\gr\Ga(\BB, \mm)$ a
symmetric 
$(\C[\g^*],\C[\g^*])$-bimodule.
The filtration being good, the vector space $\Ga_j(\BB, \mm)$
is finite dimensional for each  $j\in\Z$.
 The symmetry of the bimodule $\gr\Ga(\BB, \mm)$
implies, by an easy induction on $j$, that the vector space $\Ga_j(\BB,
\mm)$ is
stable under the $\g$-diagonal action.
We conclude that
the  $\g$-diagonal action on  $\Ga(\BB, \mm)=\cup_j\ \Ga_j(\BB, \mm)$
is locally finite.

To complete the proof of the theorem,
let $\mm$ be a $\dd$-module such that (iv) holds
and put $M:=\Ga(\BB,\mm)$. It is clear
that the $\g$-diagonal action on $M$ can be exponentiated
to an algebraic $G^{sc}$-action. It follows that $\dd_{\mu,\nu}\o_{\U_\mu\o\U_\nu}M$ is
a $G^{sc}$-equivariant $\dd$-module. But, 
$\BB$ being a projective  variety with
finitely many 
$G^{sc}$-diagonal orbits,
any  $G^{sc}$-equivariant  $\dd$-module on $\BB$ has
regular singularities, see eg. \cite[Theorem 11.6.1]{HTT}. 
Finally, by the Beilinson-Bernstein theorem,
we have $\mm=\dd_{\mu,\nu}\o_{\U_\mu\o\U_\nu}M$,
and the implication
(iv)$\en\Rightarrow\en$(i) follows.
\end{proof}

Using an analogue of the equivalence of Remark \ref{Ug}
for $(\U_\mu,\U_\nu)$-bimodules, one can reformulate
Proposition \ref{RS} as follows

\begin{Cor}\label{rs_cor} Let $\mu,\nu\in\h^*$ be  dominant regular weights
and let $M$ be a \whc\;
$(\U_\mu,\U_\nu)$-bimodule.
Then, the following conditions are equivalent

\vi The adjoint $\g$-action on $M$ is locally finite,
i.e.,
$M\in\hc(\U_\mu,\U_\nu)$ is a Harish-Chandra bimodule
in the sense of \S\ref{hccat}.

\vii The object  $\loc M\in \qgr{\UU(\mu)\o\UU(\nu)^\op}$
 has regular singularities.\qed
\end{Cor}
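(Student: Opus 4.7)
The plan is to reduce the corollary to the equivalence \text{(i)}$\iff$\text{(iv)} of Proposition \ref{RS} by transporting both sides through the Beilinson--Bernstein equivalence and its graded/directed refinement (the bimodule analogue of Remark \ref{Ug}). The idea is: a weak Harish-Chandra bimodule $M$ localizes to a $\dd_{\mu,\nu}$-module $\mm$ whose characteristic variety is automatically contained in the Steinberg variety $\ZZ$, so Proposition \ref{RS} applies; and the directed-algebra object $\loc M$ is essentially the packaging of $\mm$ together with all line-bundle twists, so regular singularities of $\loc M$ (in the sense of the analogue of Definition \ref{rs_def}) matches conditions (i)--(ii) of Proposition \ref{RS} verbatim.

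First, since $\mu,\nu\in\h^*$ are dominant and regular, the Beilinson--Bernstein theorem produces a $\dd_{\mu,\nu}$-module $\mm:=\dd_{\mu,\nu}\o_{\U_\mu\o \U_\nu}M$ with $\Ga(\BB,\mm)=M$, and $M\mapsto\mm$ is an equivalence. The weak Harish-Chandra hypothesis $\var M\subset\Delta$ translates, under the standard sign convention for right actions, into the condition $\var\mm\subset\ZZ$, so that Proposition \ref{RS} is applicable. Next, the bimodule analogue of the equivalence of Remark \ref{Ug} gives
\[
\modu{(\U_\mu\o\U_\nu^\op)}\ \iso\ \qgr{\UU(\mu)\o\UU(\nu)^\op},\qquad
M\ \mapsto\ \loc M,
\]
where the graded component of $\loc M$ indexed by $(\la,\la')\in\xx\times\xx$ is
$\Ga\bigl(\BB,\ (\oo(\la)\boxtimes\oo(\la'))\o_{\oo_{\BB}}\mm\bigr)$.

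The key geometric step is then to check that, for any $G$-stable good filtration on $\mm$, the induced filtrations on each graded piece give a good filtration on $\loc M$ with
\[
\qqgr(\loc M)\ \cong\ \tgr\mm
\]
as $\C^\times$-equivariant coherent sheaves on $T^*\B\times T^*\B$. This is the $\UU$-version (rather than $\ba(e,\cdot)$-version) of Proposition \ref{qqgr_bimod}(ii): for sufficiently positive twist $(\la,\la')$, higher cohomology of $(\oo(\la)\boxtimes\oo(\la'))\o\tgr\mm$ vanishes (relative ampleness along the cotangent projection plus Grauert-type vanishing on $\BB$), so a spectral sequence collapse identifies the graded of the sections with the sections of the graded, and then the Serre-style equivalence \eqref{comm_equiv2} (for $T^*\B\times T^*\B$ in place of $\ss$) recovers the sheaf $\tgr\mm$ from its homogeneous coordinate ring. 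Under this identification, and using that $\ZZ$ is already Lagrangian in $T^*\B\times T^*\B$, regular singularities of $\loc M$ in the sense of the obvious analogue of Definition \ref{rs_def} is exactly the assertion that some good filtration on $\mm$ has $\tgr\mm$ reduced, i.e.\ condition \text{(ii)} of Proposition \ref{RS}.

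With this translation in hand the corollary is immediate: condition \text{(iv)} of Proposition \ref{RS} on $\mm$ is, by $\Ga(\BB,\mm)=M$, exactly the local finiteness of the adjoint $\g$-action on $M$, i.e.\ $M\in\hc(\U_\mu,\U_\nu)$; the equivalence (ii)$\iff$(iv) then yields \text{(i)}$\iff$\text{(ii)} of the corollary. The main obstacle will be executing the $\qqgr(\loc M)\cong\tgr\mm$ identification cleanly, since one must simultaneously keep track of the directed-algebra grading, the Kazhdan filtration, and the $\Proj$-to-sheaf dictionary on $T^*\B\times T^*\B$; once that bookkeeping is in place, Proposition \ref{RS} does the real work. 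All other steps (automatic Lagrangian support once $\var\mm\subset\ZZ$, comparison of reducedness at the $\qqgr$ and $\tgr$ levels modulo torsion supported away from $\pi^{-1}(\chi+\m^\perp)$, and the equivalence of the two notions of regular singularities) are routine given the machinery already developed in \S\S\ref{charvar}--\ref{charvar_dir}.
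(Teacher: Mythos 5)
Your proposal is correct and matches the paper's intent: the paper's ``proof'' consists of the single remark that the corollary is a reformulation of Proposition~\ref{RS} via the bimodule analogue of Remark~\ref{Ug}, and you have filled in exactly the details that this remark leaves implicit. The chain you set up — passing to $\mm=\dd_{\mu,\nu}\o_{\U_\mu\o\U_\nu}M$ via Beilinson--Bernstein, noting that the \whc\ condition places $\var\mm$ inside $\ZZ$ (modulo the sign twist on the right factor), identifying $\qqgr(\loc M)$ with $\tgr\mm$ via the $e=0$ analogue of Proposition~\ref{qqgr_bimod}(ii), and then reading off Corollary~(i)$\iff$Proposition~\ref{RS}(iv) and Corollary~(ii)$\iff$Proposition~\ref{RS}(ii) — is precisely the expected unpacking. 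One small inconsistency in presentation: you announce you will reduce to the equivalence (i)$\iff$(iv) of Proposition~\ref{RS}, but what you actually invoke is (ii)$\iff$(iv); since the proposition asserts the equivalence of all four conditions this is harmless, just tidy it for the reader.

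One point worth making explicit, since Definition~\ref{rs_def} is an existential statement over good filtrations: you show that a good ($G$-stable) filtration on $\mm$ induces a good filtration on $\loc M$ with $\qqgr(\loc M)\cong\tgr\mm$, which gives (ii of Prop.~\ref{RS})$\Rightarrow$(ii of Cor.). For the converse you should note that any good filtration on $\loc M$, restricted to the $(0,0)$-component, yields a good filtration on $M$ and hence (again via Beilinson--Bernstein) on $\mm$, and that the resulting $\tgr\mm$ agrees with $\qqgr(\loc M)$ by the same spectral-sequence collapse. With that remark the two ``there exists a good filtration'' quantifiers are honestly matched, and the argument is complete.
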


\small{
\bibliographystyle{plain}

}

\end{document}